\documentclass{amsart}
\pagestyle{headings}
\usepackage{fullpage}
\usepackage{amsmath, amssymb}
\usepackage{amsthm}
\usepackage{amscd}
\usepackage{color}
\usepackage{tikz}
\usepackage[all,cmtip]{xy}
\usepackage{xcolor}
\usepackage{pdfcomment}
\usepackage{bbm}
\usepackage[all,cmtip]{xy}
\usepackage{blkarray}
\usepackage{amsrefs}

\newtheorem{thm}{Theorem}[section]
\newtheorem{proposition}[thm]{Proposition}
\newtheorem{corollary}[thm]{Corollary}
\newtheorem{lemma}[thm]{Lemma}
\newtheorem{definition}[thm]{Definition}

\newtheorem{remark}[thm]{Remark}

\newtheorem{notation}{Notation}

\newcommand{\Ext}{\mathrm{Ext}}
\newcommand{\End}{\mathrm{End}}
\newcommand{\Hom}{\mathrm{Hom}}
\newcommand{\add}{\mathrm{add}}
\newcommand{\ind}{\mathrm{ind}}
\newcommand{\rad}{\mathrm{rad}}

\newcommand{\wt}{\widetilde}

\newcommand{\cN}{\mathcal{N}}

\newcommand{\La}{\Lambda}
\newcommand{\cB}{\mathcal{B}}
\newcommand{\cC}{\mathcal{C}}

\newcommand{\cU}{\mathcal{U}}

\newcommand{\bC}{\mathbb{C}}
\newcommand{\bE}{\mathbb{E}}
\newcommand{\bL}{\mathbb{L}}
\newcommand{\bA}{\mathbb{A}}
\newcommand{\bD}{\mathbb{D}}
\newcommand{\bG}{\mathbb{G}}
\newcommand{\bB}{\mathbb{B}}

\newcommand{\bZ}{\mathbb{Z}}

\newcommand{\bF}{\mathbb{F}}

\newcommand{\bK}{\mathbb{K}}
\newcommand{\Modu}{\mathrm{Mod}}
\newcommand{\modu}{\mathrm{mod}}

\begin{document}

\title[Cluster tilting modules for  mesh algebras]{Cluster tilting modules for  mesh algebras}
%\thanks{The third author would like to kindly acknowledge the financial support offered by Somerville College during her JRF}
\author{Karin Erdmann}
\address{Karin Erdmann, Mathematical Institute,
Andrew Wiles Building,
Radcliffe Observatory Quarter,
Woodstock Road,
Oxford OX2 6GG, United Kingdom
}
\email{erdmann@maths.ox.ac.uk}
\urladdr{http://people.maths.ox.ac.uk/erdmann/}
\author{Sira Gratz}
\address{
Sira Gratz, School of Mathematics and Statistics,
University of Glasgow,
University Place,
Glasgow G12 8QQ, United Kingdom
}
\email{Sira.Gratz@glasgow.ac.uk}
\urladdr{https://www.maths.gla.ac.uk/~sgratz/}
\author{Lisa Lamberti}
\address{
Lisa Lamberti, Department of Biosystems Science and Engineering, ETH Z\"urich, Basel, Switzerland; SIB Swiss Institute of Bioinformatics, Basel Switzerland
%Mattenstrasse 26,
%4058 Basel, Switzerland
}
\email{lisa.lamberti@bsse.ethz.ch}
%\urladdr{https://www.bsse.ethz.ch/cbg/group/people/person-detail.html?persid=158462}

\begin{abstract}
	\noindent We study cluster tilting modules in mesh algebras of Dynkin type, providing a new proof for their existence. Except for type $\bG_2$, we show that these are precisely the maximal rigid modules, and that %in all but one case we show 
	they are equivariant for a certain automorphism. 
	We further study their mutation, providing an example of mutation in an abelian category which is not stably 2-Calabi-Yau, and explicitly describe the combinatorics.
\end{abstract}

\maketitle

%\tableofcontents

\section{Introduction}

In recent years, cluster tilting theory has gained traction in the study of representation theory of finite dimensional algebras, and in algebraic Lie theory. On one hand it is a tool to study combinatorial phenomena arising in cluster theory, in the context of additive categorifications of cluster algebras. On the other hand, it generalizes classical tilting theory, which is crucial in the understanding of derived equivalences. 

In this article, we study 2-cluster tilting modules (cluster tilting modules for short, cf.\ Definition \ref{D:2-ct}) for finite dimensional self-injective algebras. The existence of a cluster tilting module for such an algebra $A$ has powerful implications. It was shown by Erdmann and Holm in \cite{EH} that if $A$ has cluster tilting modules, all modules must have complexity at most $1$, that is, the terms in a minimal projective resolution have bounded dimensions. Furthermore, the representation dimension of $A$ must be at most $3$. The notion of representation dimension was introduced by Auslander \cite{Auslander}, who also showed that the representation dimension of an algebra is at most $2$ if and only if it is of finite type. In this sense, the existence of a cluster tilting module shows that the algebra $A$ is not too far from being representation finite. 

Cluster tilting modules for self-injective algebras are notoriously elusive. In \cite{EH}, the only examples found were for certain algebras of finite type, and it is shown that block algebras of infinite representation type cannot have cluster tilting modules. However, there are cluster tilting modules for an important class of finite dimensional self-injective algebras: In a series of papers (including \cite{GLS-2}, \cite{GLS-3}, \cite{GLS-1}), Gei\ss, Leclerc and Schr\"oer showed the existence of cluster tilting modules for preprojective algebras of simply laced Dynkin type, and studied the cluster, as well as representation, theoretic implications. 

In this article, our main objects of interest are mesh algebras of Dynkin type, which were studied by Erdmann and Skowro\'{n}ski in \cite{ES}. Each mesh algebra is associated to a generalized Cartan matrix, with type either an arbitrary Dynkin type, or a type called $\bL_n$ (for $n \geq 1$). They are a natural generalization of preprojective algebras, which are precisely the mesh algebras of simply laced Dynkin type. Therefore, it is a natural question to ask whether mesh algebras have cluster tilting modules. 
Our first main result shows that all mesh algebras (except those of type $\bL_n$) indeed have cluster tilting modules (cf.\ Theorem \ref{thm:La_2_ct}).
The proof uses a result by Darp\"o and Iyama \cite{DI} (which they also use to find cluster tilting modules), and exploits the work of Gei\ss, Leclerc and Schr\"oer  \cite{GLS-3}, \cite{GLS-1}. After the completion of this paper, it has been brought to our attention that Asai \cite{Asai} has already shown (by a different construction) that a mesh algebra $\La$ has a cluster tilting module, which is equivariant under a certain autoequivalence, if and only if it is not of type $\bL_n$. In fact, coupling this with an observation by Yang, Zhang and Zhu \cite{YZZ}, it follows that the mesh algebra $\La$ has a cluster tilting module if and only if it is not of type $\bL_n$.

Hence, mesh algebras of (non-simply laced) Dynkin type provide a new example of a naturally occurring class of finite dimensional self-injective algebras that have cluster tilting modules. In particular, this yields that these algebras are 2-representation finite, in the sense of Iyama, see \cite{I}. Furthermore, we show that the cluster tilting modules coincide with the maximal rigid modules (cf.\ Theorem \ref{T:maximal rigid = 2-ct}). 

Our second main result describes mutation of cluster tilting modules (i.e.\ maximal rigid modules) for a mesh algebra $\La$ of Dynkin type other than $\bG_2$. Mutation replaces a summand of a cluster tilting module by a unique other module to obtain what is again a cluster tilting module, and this mutation is encoded in certain short exact sequences, called exchange sequences. Classically, in a (stably) 2-Calabi Yau setting, the summands we replace are indecomposable. In our case, however, they need not be. Instead, they are what we call minimal $\gamma$-equivariant (cf.\ Definition \ref{D:minimal equivariant}) under a certain automorphism $\gamma$. The number of non-isomorphic minimal $\gamma$-equivariant summands in a cluster tilting module is exactly the number of positive roots in the Dynkin diagram associated to $\La$ (cf.\ Proposition \ref{P:number of orbits}). This provides a rare instance of an explicit example of mutation on the level of a module category, extending work of Gei\ss, Leclerc and Schr\"oer \cite{GLS-3}.

We are able to exploit a symmetry of $\Ext^1$-spaces for modules which are equivariant under $\gamma$ (cf.\ Proposition \ref{P:Ext-symmetry}), to describe mutation of cluster tilting modules of $\La$. The key observation is that, if the automorphism $\gamma$ has order $\leq 2$, which is the case for all mesh algebras of Dynkin type except $\bG_2$, then every basic maximal rigid module is $\gamma$-equivariant. More generally, our mutation theory can be applied to any finite dimensional self-injective algebra for which a suitable automorphism $\gamma$ exists, as long as we restrict ourselves to cluster tilting modules that are $\gamma$-equivariant (provided those exist, as they do for example for $P(\bG_2)$). On the stable level, our mutation agrees with the mutation of $2$-cluster tilting subcategories with respect to almost complete $2$-cluster tilting subcategories as described by Iyama and Yoshino in \cite{IY}. However, the mutation we describe takes place already on the level of the module category, and is conscious of the twist by $\gamma$; in fact, we show that while mutation at a minimal $\gamma$-equivariant summand is involutive, the indecomposable summands of the $\gamma$-equivariant summand get shuffled. For an approach to combinatorially similar mutations, with different starting conditions, see Demonet's work \cite{Demonet}.

\smallskip

The paper is structured as follows. In Section \ref{S:Preliminaries} we recall some preliminaries on Galois covers, mesh algebras and cluster tilting modules. In Section \ref{S:Existence} we show the existence of cluster tilting modules for mesh algebras of Dynkin type. In Section \ref{S:Automorphisms} we discuss certain useful automorphisms of our algebras, and show that we can rely on $\Ext^1$-symmetries for modules which are equivariant under a certain automorphism. In Section \ref{S:Mutation} we introduce the notion of mutation of maximal rigid modules for a specific class of self-injective algebras, which includes the mesh algebras $P(\bB_k)$ for $k \geq 2$, $P(\bC_n)$ for $n \geq 3$ and $P(\bF_4)$. In Section \ref{S:Global dimension} we study the endomorphism algebras of basic maximal rigid modules for such an algebra $\La$. In Section \ref{S:rep dimension}, we show that when $\La$ is a mesh algebra of Dynkin type, the global dimension of these endomorphism rings is $3$, and hence the representation dimension of $\La$ is at most $3$. Finally, in Section \ref{S:matrix mutation}, we explicitly describe the combinatorics of the mutation of what we call admissible cluster tilting modules.

\section{Preliminaries} \label{S:Preliminaries}

Throughout this article, we work over an algebraically closed field $\bK$.

\subsection{Mesh categories}

Let $\Delta$ be an orientation of a simply laced Dynkin diagram, and let $\cC= \bK(\bZ\Delta)$ 
be the mesh category of the translation quiver
$(\bZ \Delta, \tau)$ (see Happel's work \cite[Chapter~I,~5.6]{Happel2} for a reminder on this construction). It is well-known and easy to see that the mesh category $\cC$
does not depend on the orientation of $\Delta$.
The vertices in $\bZ\Delta$ are labelled by $\bZ \times \Delta_0$, where $\Delta_0$ denotes the vertex set of $\Delta$, and arrows are given as follows: For every arrow $\alpha$ in $\Delta$ starting in $v \in \Delta_0$ and ending in $w \in \Delta_0$ and for every $i \in \bZ$ we get a pair of arrows
\begin{itemize}
	\item{$\alpha_i$ starting in $(i,v)$ and ending in $(i,w)$ and}
	\item{$\alpha'_i$ starting in $(i,w)$ and ending in $(i-1,v)$.}
\end{itemize}
Following the convention adopted in \cite[Section~9]{GLS-2} we denote the vertex in $\bZ\Delta$ labelled by $(i,q)$ by $q_i$. With this notation, 
the translation on $\mathbb{Z}\Delta$ is given by 
\[
	\tau(q_i) = q_{i+1}.
\]  

In Figure \ref{fig:standard labelling} we give an illustration of the 
translation quiver $(\bZ\Delta, \tau)$ for $\Delta$ of type $\bA_{2k-1}$, and with labelling induced by the labelling of the orientation of $\bA_{2k-1}$ as given in Figure \ref{fig:orientation}. 

\begin{figure}
 \begin{center}
\begin{tikzpicture}[scale = 0.6, font=\small]

%P(\bA_5)
%goes to infinity
\node (x) at (-1,7) {$\vdots$};
\node (x) at (1,7) {$\vdots$};
\node (x) at (-1,-1) {$\vdots$};
\node (x) at (1,-1) {$\vdots$};

%draw nodes
\node (32) at (-2,6) {$\ldots$};
\node (03) at (0,6) {$0_3$};
\node (42) at (2,6) {$\ldots$};
%\node (32) at (-4,6) {$\bullet$};
%\node (42) at (4,6) {$(2,4)$};

\node (12) at (-1,5) {$1_2$};
\node (22) at (1,5) {$2_2$};
\node (12k-3) at (-3,5) {$(2k-3)_1$};
\node (12k-2) at (3,5) {$(2k-2)_1$};

\node (31) at (-2,4) {$\ldots$};
\node (02) at (0,4) {$0_2$};
\node (41) at (2,4) {$\ldots$};

\node (11) at (-1,3) {$1_1$};
\node (21) at (1,3) {$2_1$};
\node (02k-3) at (-3,3) {$(2k-3)_0$};
\node (02k-2) at (3,3) {$(2k-2)_0$};

\node (30) at (-2,2) {$\ldots$};
\node (01) at (0,2) {$0_1$};
\node (40) at (2,2) {$\ldots$};

\node (10) at (-1,1) {$1_0$};
\node (20) at (1,1) {$2_0$};
\node (-12k-3) at (-3,1) {$(2k-3)_{(-1)}$};
\node (-12k-2) at (3,1) {$(2k-2)_{(-1)}$};

\node (00) at (0,0) {$0_0$};

\node (a) at (-2,0) {$\ldots$};
\node (b) at (2,0) {$\ldots$};

%draw arrows
\draw[->] (32) -- (12);
\draw[->] (03) -- (12);
\draw[->] (03) -- (22);
\draw[->] (42) -- (22);

\draw[->] (12) -- (31);
\draw[->] (12) -- (02);
\draw[->] (22) -- (02);
\draw[->] (22) -- (41);

\draw[->] (31) -- (11);
\draw[->] (02) -- (11);
\draw[->] (02) -- (21);
\draw[->] (41) -- (21);

\draw[->] (11) -- (30);
\draw[->] (11) -- (01);
\draw[->] (21) -- (01);
\draw[->] (21) -- (40);

\draw[->] (30) -- (10);
\draw[->] (01) -- (10);
\draw[->] (01) -- (20);
\draw[->] (40) -- (20);

\draw[->] (10) -- (00);
\draw[->] (20) -- (00);

\draw[->] (10) -- (a);
\draw[->] (20) -- (b);

\draw[->] (32) -- (12k-3);
\draw[->] (42) -- (12k-2);

\draw[->] (31) -- (02k-3);
\draw[->] (41) -- (02k-2);

\draw[->] (30) -- (-12k-3);
\draw[->] (40) -- (-12k-2);

\draw[->] (12k-3)--(31);
\draw[->] (12k-2)--(41);

\draw[->] (02k-3)--(30);
\draw[->] (02k-2)--(40);

\draw[->] (-12k-3)--(a);
\draw[->] (-12k-2)--(b);

\end{tikzpicture}
\caption{The quiver $\mathbb{Z}\Delta$ of the mesh category $\cC = \bK(\mathbb{Z}\Delta)$ with the standard labelling induced by the labelling of the orientation $\Delta$ of $\bA_{2k-1}$ from Figure \ref{fig:orientation}}\label{fig:standard labelling}
\end{center}
\end{figure}

\begin{figure}
\begin{center}
\begin{tikzpicture}[scale=1.1,cap=round,>=latex]

%type A

%node style
%draw the vertices
\node[label=below:\footnotesize$2k-3$] (1) at (0,0) {\footnotesize$\bullet$};  
\node[label=below:\footnotesize$2k-5$] (2) at (1,0) {\footnotesize$\bullet$};  
\node (3) at (2,0) {$\ldots$};  
\node[label=below:\footnotesize$1$] (a) at (3,0) {\footnotesize$\bullet$};  
\node[label=below:\footnotesize$0$] (k) at (4,0) {\footnotesize$\bullet$};  
\node[label=below:\footnotesize$2$] (k+1) at (5,0) {\footnotesize$\bullet$};  
\node (b) at (6,0) {$\ldots$};  
\node[label=below:\footnotesize$2k-4$] (2k-2) at (7,0) {\footnotesize$\bullet$};  
\node[label=below:\footnotesize$2k-2$] (2k-1) at (8,0) {\footnotesize$\bullet$};    

%draw the arrows
\draw[->] (1) -- (2);
\draw[->] (2) -- (3);
\draw[->] (3) -- (a);
\draw[->] (a) -- (k);
\draw[<-] (k) -- (k+1);
\draw[<-] (k+1) -- (b);
\draw[<-] (b) -- (2k-2);
\draw[<-] (2k-2) -- (2k-1);

%type D

\begin{scope}[yshift = -40]
\node[label=below:\footnotesize$0$] (0) at (0,0.5) {\footnotesize$\bullet$};  
\node[label=below:\footnotesize$1$] (n) at (0,-0.5) {\footnotesize$\bullet$}; 
\node[label=below:\footnotesize$2$] (1) at (0.5,0) {\footnotesize$\bullet$};  
\node[label=below:\footnotesize$3$] (2) at (1.5,0) {\footnotesize$\bullet$};  
\node (3) at (2.5,0) {$\ldots$};  
\node[label=below:\footnotesize$n-1$] (a) at (3.5,0) {\footnotesize$\bullet$};  
\node[label=below:\footnotesize$n$] (k) at (4.5,0) {\footnotesize$\bullet$};

%draw the arrows
\draw[->] (0) -- (1);
\draw[->] (n) -- (1);
\draw[<-] (1) -- (2);
\draw[<-] (2) -- (3);
\draw[<-] (3) -- (a);
\draw[<-] (a) -- (k);

\end{scope}

%type E

\begin{scope}[yshift = -80]
\node[label=below:\footnotesize$4$] (1) at (0,0) {\footnotesize$\bullet$};  
\node[label=below:\footnotesize$2$] (2) at (1,0) {\footnotesize$\bullet$}; 
\node[label=above:\footnotesize$0$] (3) at (2,0) {\footnotesize$\bullet$}; 
\node[label=below:\footnotesize$1$] (a) at (3,0) {\footnotesize$\bullet$};  
\node[label=below:\footnotesize$3$] (k) at (4,0) {\footnotesize$\bullet$}; 
\node[label=below:\footnotesize$5$] (5) at (2,-1) {\footnotesize$\bullet$};

%draw the arrows
\draw[->] (1) -- (2);
\draw[->] (2) -- (3);
\draw[<-] (3) -- (a);
\draw[<-] (a) -- (k);
\draw[->] (3) -- (5);

\end{scope}

\end{tikzpicture}
\end{center}
\caption{Quivers of type $\bA_{2k-1}$, $\bD_{n+1}$ and $\bE_6$ with convenient symmetries} \label{fig:orientation}
\end{figure}

\subsection{Orbit categories of mesh categories}

The mesh category $\cC$ is a 
locally bounded $\bK$-linear category such that its additive closure $\add(\cC)$ is Krull-Schmidt,
and we can apply the results of Darp\"o and Iyama given in \cite{DI}. 
Here, locally bounded means that for all objects $x$ in $\cC$ we have
\[
	\sum_{y \in \cC} (\dim_{\bK} \Hom_\cC(x,y) + \dim_{\bK} \Hom_\cC(y,x)) < \infty.
\]

Assume $G$ is a group of $\bK$-linear automorphisms of the category $\cC$. 
Following \cite{DI}, the action of $G$ is admissible if $g(x) \not\cong x$ for
every object $x$ in $\cC$ and every $1\neq g\in G$. 
Assuming this,
the orbit category $\cC/G$ 
is again a locally bounded $\bK$-linear category such that its additive closure $\add(\cC/G)$ is Krull-Schmidt.  

We focus on the case where $G$ is induced by graph automorphisms of $\bZ\Delta$ which have
finitely many orbits on vertices of $\bZ\Delta$. In this case the action is admissible
if any $g \in G$ with $g\neq 1$ acts freely on $\bZ\Delta$. Assuming this, the orbit category
$\cC/G$ is the $\bK$-category of a finite-dimensional $\bK$-algebra $\bK(\bZ\Delta/G)$, whose
quiver has vertices and arrows labelled by the orbits of $G$, and with relations
induced by the mesh relations of $\cC$.

The algebra obtained in this way is called the {\em mesh algebra} of
$\bZ\Delta/G$, denoted by $\bK(\bZ\Delta/G)$. 
We will then, by common convention, identify the $\bK$-algebra $\bK(\bZ\Delta/G)$ with 
its $\bK$-category $\cC/G$.

\subsection{Mesh algebras}\label{S:list}

We consider the following automorphisms of $\cC$ induced by graph automorphisms of $\bZ\Delta$.

\begin{itemize}

\item[(i)]{First, the translation $\tau$ of $\cC$, which is 
admissible.}

\item[(ii)]{Second, suppose $\sigma$ is a graph automorphism of the underlying Dynkin diagram of $\Delta$, and suppose that the orientation of $\Delta$ is invariant under $\sigma$. Then $\sigma$ induces the following graph automorphism of $\bZ\Delta$ of finite order:
\[
	q_i \mapsto \sigma(q)_i, \; \; a_i \mapsto \sigma(a)_i,
\]
for all $i \in \bZ$, and every vertex $q$ and arrow $a$ of $\Delta$. By abuse of notation, we denote this graph automorphism of $\bZ\Delta$, and the induced automorphism of $\cC$, also by $\sigma$. The automorphism $\sigma$ commutes with $\tau$, and we consider the admissible automorphism 
$\sigma \tau$. 
}
\end{itemize}

This leads us to consider the following cases. We refer the reader to \cite{ES} for a detailed description of the respective mesh algebras.

\begin{enumerate}

\item{Let $G= \langle \tau \rangle$. Then $\cC/G \cong P(\Delta)$, the 
preprojective algebra of type $\Delta$. Recall that $P(\Delta)$ is defined as the algebra 
$K\overline{\Delta}/\langle I_\rho \rangle$, 
where $\overline{\Delta}$ is the double quiver of $\Delta$
obtained from $\Delta$ by adding an arrow $a^\ast$ starting in vertex $w$ and ending in vertex $v$ for each arrow $a$ in $\Delta$ starting in $v$ and ending in $w$, and where $\langle I_\rho \rangle$ 
is the ideal generated by the element
$$
I_{\rho}=\sum_{a\in Q}(a^\ast a- aa^\ast).
$$
Note that $P(\Delta)$ only depends on the underlying Dynkin diagram of $\Delta$, and not on its orientation.
}

\item{ For $k \geq 2$ let $\Delta$ be of type $\bA_{2k-1}$, and consider the automorphism $\sigma$ of $\cC$ induced by the graph automorphism of order 2 of $\bA_{2k-1}$. Assume the orientation of $\Delta$ is invariant under $\sigma$, e.g.\ take the orientation of $\bA_{2k-1}$ from Figure \ref{fig:orientation}.
Then $\cC/\langle \sigma \tau \rangle$ is isomorphic to the mesh algebra $P(\bB_k)$.
}

\item{ For $n \geq 3$, let $\Delta$ be of type $\bD_{n+1}$, and consider the automorphism $\sigma$ on $\cC$ induced by the graph automorphism of order $2$ (or, if $n=3$, some choice $\sigma$ of automorphism of order $2$) of $\bD_{n+1}$. Assume the orientation of $\Delta$ is invariant under $\sigma$, e.g.\ take the orientation of $\bD_{n+1}$ from Figure \ref{fig:orientation}.
Then  $\cC/ \langle \sigma \tau \rangle$ is isomorphic to the mesh algebra $P(\bC_n)$. 
}

\item{Let $\Delta$ be of type $\bE_6$ and consider the automorphism $\sigma$ of $\cC$ induced by the graph automorphism
of order $2$ of $\bE_6$. Assume the orientation of $\Delta$ is invariant under $\sigma$, e.g.\ take the orientation of $\bE_{6}$ from Figure \ref{fig:orientation}.
Then $\cC/\langle \sigma \tau \rangle$ is isomorphic to the mesh algebra $P(\bF_4)$. 
}

\item{Let $\Delta$ be of type $\bD_4$, and consider the automorphism $\sigma$ of $\cC$ induced by the graph automorphism of order $3$ of $\bD_4$. Assume the orientation of $\Delta$ is invariant under $\sigma$, e.g.\ take the orientation of $\bD_{4}$ from Figure \ref{fig:orientation} (setting $n = 3$).
Then $\cC/ \langle \sigma \tau \rangle$ is isomorphic to the mesh algebra $P(\bG_2)$.
}
 \end{enumerate}

\subsection{Modules of $\cC$}

In line with \cite{GLS-2}, we work with left modules throughout.
For a $\bK$-linear category $\cB$ a  {\em left $\cB$-module} is a $\bK$-linear functor
from $\cB$ to the category of $\bK$-vector spaces. 
We denote by 
$\Modu \cB$ the category of left
$\cB$-modules, and by 
$\modu \cB$ the category of finitely presented left $\cB$-modules.

Assume that $G$ is a group of $\bK$-linear automorphisms of $\cB$, acting from the left on
$\cB$. Then $G$ acts naturally on $\Modu \cB$:
If $g\in G$ and $M$ is a left $\cB$-module, we
write 
\[
	g_*(M):= M(g^{-1}(-))
\] 
 (denoted by $M^g$ in \cite{GLS-2}). Here we use the notation from \cite{DI}, to facilitate comparing the relevant results we are referring to from this paper, even though, following \cite{GLS-2}, we do work with left modules throughout. If $\cU$ is a full subcategory of $\cB$ we write $g_*(\cU)$ for the full subcategory of objects $g_*(M)$ for $M$ in $\cU$. We say that a full subcategory $\cU$ of $\Modu \cB$ is {\em $G$-equivariant} if $g_*(\cU) = \cU$ for all $g\in G$.

\subsection{The ``start module''}\label{S:start module}

Let $\Delta$ be an orientation of a simply laced Dynkin diagram. Following  the approach in \cite[Section~2.4]{GLS-1}
we define the Auslander category $\Gamma_\Delta$ of $\Delta$  to be the full subcategory of $\cC$
whose objects are the vertices in the Auslander-Reiten quiver $A_\Delta$ of $\bK \Delta^{op}$, viewed as a subquiver of $\bZ \Delta$. 
%For example if $\Delta$ is of type $\bA_5$ with orientation as in Figure \ref{fig:orientation} for $k=3$, then $A_\Delta$ is depicted in Figure \ref{fig:AQ}. 
Note that $\modu \Gamma_\Delta$ can be identified with a full subcategory of $\modu \cC$.

For a group $G$ of $\bK$-linear automorphisms of $\cC$, the covering functor
$F \colon \cC \to \cC/G$  induces the pull-up 
\[
	F^* \colon \Modu \cC/G \to \Modu \cC
\] 
and the push-down (denoted by $F_{\lambda}$ by Bongartz and Gabriel in \cite{BG})
\[
	F_{*} \colon \Modu \cC \to \Modu \cC/G.
\]
Both of these functors are exact and $(F_*, F^*)$ is an adjoint pair. 
Note that restriction of $F_*$ to the subcategory of finitely presented modules induces
a functor $F_* \colon \modu \cC \to \modu \cC/G$. 

In \cite{GLS-1} the main object of study is the $P(\Delta)$-module $I_\Delta$, where, as in Section \ref{S:list}, $P(\Delta)$ denotes the preprojective algebra of type $\Delta$. Recall that $P(\Delta) \cong \cC / \langle \tau \rangle$, where $\cC = \bK (\bZ \Delta)$ with translation $\tau$ on $\bZ \Delta$, and set $F = \colon \cC \to \cC/\langle \tau \rangle$ to be the covering functor. For each vertex $x$ in the quiver $A_\Delta$ (we write $x \in A_\Delta$ for brevity), we denote by $S(x)$ the simple at $x$ and by $I(x)$ the injective $\Gamma_\Delta$-module
with socle $S(x)$. Observe that $I(x)$ is spanned by all paths in $A_\Delta$ ending at vertex $x$. Then the module $I_\Delta$ is the push-down 
\[
	I_\Delta = F_*(\bigoplus_{x\in A_\Delta} I(x)),
\]
of the $\cC$-module $\bigoplus_{x\in A_\Delta} I(x)$ which belongs to $\modu \Gamma_\Delta$, i.e.\ which
is supported on $A_\Delta$, to the preprojective algebra $P(\Delta)$. As in \cite{GLS-1}, we will refer to $I_\Delta$ as the {\em start module of $P(\Delta)$ with respect to $\Delta$}. Note that while $P(\Delta)$ does not depend on the orientation of $\Delta$, the start module $I_\Delta$ does.

\subsection{Cluster tilting modules}\label{subsect:2tilt}
Let $\mathcal{T}$ be a triangulated or abelian category. If $U$ is an object in $\mathcal{T}$, we write $\add(U)$ for its additive hull. An object $U$ in $\mathcal{T}$ is called {\em rigid}, if $\Ext_{\mathcal{T}}^1(U,U) = 0$. It is called {\em maximal rigid}, if whenever $X \oplus U$ is rigid, then $X \in \add(U)$.

\begin{definition}\label{D:2-ct}
Let $\mathcal{T}$ be a triangulated or abelian category and let $\mathcal{U}$ be a full subcategory of $\mathcal{T}$ that is closed under isomorphisms, finite direct sums and direct summands. Then $\mathcal{U}$ is a {\em 2-cluster tilting subcategory of $\mathcal{T}$}, or {\em cluster tilting subcategory for short} if it is functorially finite and it satisfies
\begin{eqnarray*}
	\mathcal{U} 	&=& \{T \in \mathcal{T} \mid \mathrm{Ext^1}(U,T)=0 \; \text{for all} \; U \in \mathcal{U}\}\\
				&=& \{T \in \mathcal{T} \mid \mathrm{Ext^1}(T,U)=0 \; \text{for all} \; U \in \mathcal{U}\}.
\end{eqnarray*}

If $\mathcal{U} = \mathrm{add}(U)$ for an object $U \in \mathcal{T}$, then we call $U$ a {\em 2-cluster tilting object of $\mathcal{T}$}, and if $\mathcal{T}$ is a category of modules, then we call $U$ a {\em 2-cluster tilting module}, or {\em cluster tilting module} for short.
\end{definition}

Note that any 2-cluster tilting object is maximal rigid, but the converse is not true in general. The next theorem provides a concrete example of a cluster tilting module, which will be essential for us.

\begin{thm}[{\cite[Theorem~1]{GLS-1},\cite[Theorem~2.2]{GLS-3}} ]\label{T:GLS}
	The start module $I_\Delta$ of $P(\Delta)$ with respect to $\Delta$ is a cluster tilting module of $P(\Delta)$.
\end{thm}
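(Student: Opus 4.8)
The plan is to reduce the statement to two assertions: that $I_\Delta$ is rigid, and that it has the maximal possible number of pairwise non-isomorphic indecomposable summands among rigid $P(\Delta)$-modules. Since $P(\Delta)$ is a finite-dimensional self-injective algebra whose stable module category $\underline{\modu}\, P(\Delta)$ is $2$-Calabi--Yau (a standard fact for preprojective algebras of Dynkin type), a basic rigid module realising this maximal number of summands is automatically $2$-cluster tilting. Alternatively one verifies directly that every $X$ with $\Ext^1_{P(\Delta)}(I_\Delta, X) = 0$ lies in $\add(I_\Delta)$, using functorial finiteness of $\add(I_\Delta)$ in the self-injective setting together with the symmetry $\Ext^1(I_\Delta,X)\cong D\Ext^1(X,I_\Delta)$ coming from the $2$-Calabi--Yau property; either way the second equality in Definition \ref{D:2-ct} then comes for free by the same symmetry.

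For rigidity the idea is to pass through the Galois covering $F \colon \cC \to \cC/\langle\tau\rangle = P(\Delta)$. For modules $M, N \in \modu\,\cC$ with finite support the push-down satisfies the covering formula
\[
	\Ext^1_{P(\Delta)}(F_*M, F_*N) \;\cong\; \bigoplus_{m \in \bZ} \Ext^1_{\cC}\!\big(M,\, (\tau^m)_* N\big),
\]
(compare \cite{BG}, \cite{DI}), so taking $M = N = \bigoplus_{x \in A_\Delta} I(x)$ the problem becomes the vanishing of $\Ext^1_{\cC}\big(I(x), (\tau^m)_* I(y)\big)$ for all $x, y \in A_\Delta$ and all $m \in \bZ$. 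For $m = 0$ this reduces to $\Ext^1_{\Gamma_\Delta}(I(x), I(y)) = 0$, which holds because the $I(y)$ are injective $\Gamma_\Delta$-modules; one checks that a minimal projective resolution of $I(x)$ as a $\cC$-module is the push-up of the one over the Auslander category $\Gamma_\Delta$, which is legitimate since $A_\Delta$ sits in $\bZ\Delta$ as a slice, hence convexly. For $m \neq 0$ the support $\tau^{-m}(A_\Delta)$ of $(\tau^m)_* I(y)$ is disjoint from $A_\Delta$, but $\Ext^1$ need not vanish because the projective resolution of $I(x)$ extends beyond $A_\Delta$; here one invokes the mesh relations together with the fact that each $I(x)$ has projective dimension at most $2$ over $\Gamma_\Delta$, so that the potential contributions across the $\tau$-orbit pair up and cancel.

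The counting step is comparatively soft. By Gabriel's theorem the Auslander--Reiten quiver $A_\Delta$ of $\bK\Delta^{op}$ has exactly $r$ vertices, where $r$ is the number of positive roots of the Dynkin diagram underlying $\Delta$, so $\bigoplus_{x \in A_\Delta} I(x)$ has $r$ indecomposable summands. Since $A_\Delta$ is a slice it meets each $\tau$-orbit in $\bZ\Delta$ in exactly one vertex; hence by the covering criterion for the push-down each $F_* I(x)$ is indecomposable, and $F_* I(x) \not\cong F_* I(x')$ for $x \neq x'$. Thus $I_\Delta$ is a basic rigid $P(\Delta)$-module with exactly $r$ indecomposable summands, and $r$ is precisely the number of indecomposable summands of a basic maximal rigid (equivalently, $2$-cluster tilting) $P(\Delta)$-module. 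Combined with rigidity, this gives that $I_\Delta$ is $2$-cluster tilting.

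The main obstacle is the middle step: controlling $\Ext^1_\cC\big(I(x), (\tau^m)_* I(y)\big)$ for all twists $m$ simultaneously and showing that the nonzero contributions cancel along each $\tau$-orbit. This is the genuinely technical heart of the argument, where one must use the explicit mesh combinatorics of $\bZ\Delta$ (or, dually, the $2$-Calabi--Yau symmetry of $\Ext^1$-spaces over $P(\Delta)$ together with a parity and dimension count) rather than formal manipulation. The remaining ingredients — the covering formula, the slice/convexity argument, and the count of summands — are routine within the framework recalled in Section \ref{S:Preliminaries}.
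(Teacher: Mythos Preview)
The paper does not give its own proof of this statement: Theorem~\ref{T:GLS} is simply quoted from \cite{GLS-1,GLS-3} and used as a black box. So there is nothing in the paper to compare your proposal against.

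As for the proposal itself, the overall architecture (rigidity plus a count of indecomposable summands, combined with the stably $2$-Calabi--Yau property) is a reasonable strategy, but the execution has two genuine problems.

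First, your treatment of the $m\neq 0$ terms in the covering formula is not an argument. You write that ``the potential contributions across the $\tau$-orbit pair up and cancel,'' but the covering formula expresses $\Ext^1_{P(\Delta)}(I_\Delta,I_\Delta)$ as a \emph{direct sum} of the spaces $\Ext^1_\cC\big(I(x),(\tau^m)_*I(y)\big)$; these are vector spaces, not integers, and nothing cancels --- each summand must vanish individually. This is precisely the step you flag as ``the main obstacle,'' and your sketch does not resolve it. (For $m=0$ your reduction to $\Ext^1_{\Gamma_\Delta}$ is also not automatic: the projective resolution of $I(x)$ over $\cC$ is not in general the push-up of the one over $\Gamma_\Delta$, since projective $\cC$-modules are representable functors on all of $\bZ\Delta$, not just on $A_\Delta$.)

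Second, the counting step is circular as stated. You assert that ``$r$ is precisely the number of indecomposable summands of a basic maximal rigid (equivalently, $2$-cluster tilting) $P(\Delta)$-module,'' but knowing this number --- and knowing that maximal rigid coincides with $2$-cluster tilting for $P(\Delta)$ --- is essentially part of the content of the GLS theorems you are trying to prove. Without an independent source for this bound, a rigid module with $r$ summands is not automatically cluster tilting.

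For the record, the actual proofs in \cite{GLS-1,GLS-3} proceed differently: rigidity is obtained by a direct computation (for instance via the known formula $\dim\Ext^1_{P(\Delta)}(M,N)=\dim\Hom(M,N)+\dim\Hom(N,M)-\langle\underline{\dim}\,M,\underline{\dim}\,N\rangle_\Delta-\langle\underline{\dim}\,N,\underline{\dim}\,M\rangle_\Delta$ applied to the summands of $I_\Delta$), and the cluster tilting property is established by exhibiting, for every $P(\Delta)$-module $M$, an explicit $\add(I_\Delta)$-resolution $0\to M\to T'\to T''\to 0$, rather than by a counting argument.
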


The following crucial result due to Darp\"o and Iyama allows us to exploit Theorem \ref{T:GLS} to show existence of cluster tilting modules for mesh algebras. While the result in \cite{DI} is stated more generally for $d$-cluster tilting subcategories and $d$-cluster tilting modules, we only use the case $d = 2$. Recall that our field $\bK$ is algebraically closed, and note that a $\bK$-linear category is Morita equivalent to its additive closure.

\begin{thm}[{\cite[Corollary~2.14]{DI}}]\label{C:move} Let $\mathcal{T}$ be a locally bounded $\bK$-linear category such that $\add(\mathcal{T})$ is Krull-Schmidt
and let $G$ be a finitely generated free abelian group, acting admissibly on 
$\mathcal{T}$. The push-down functor 
$F_*: \modu \mathcal{T} \to \modu \mathcal{T}/G$ induces a bijection between
\begin{itemize}
\item[(a)] the set of locally bounded $G$-equivariant cluster tilting subcategories of $\modu \mathcal{T}$;
\item[(b)] the set of locally bounded cluster tilting subcategories of $\modu \mathcal{T}/G$.
\end{itemize}
\end{thm}

\begin{remark}\label{R:move} \normalfont
In particular, it follows that in the set-up as in Theorem \ref{C:move}, the push-down $F_*$ induces a bijection between
\begin{itemize}
\item[(a)] the set of locally bounded $G$-equivariant cluster tilting subcategories of $\modu \mathcal{T}$ that have finitely many $G$-orbits of indecomposable objects;
\item[(b)] the set of basic cluster tilting modules in $\modu \mathcal{T}/G$.
\end{itemize}
\end{remark}

\section{Existence of cluster tilting modules for mesh algebras}\label{S:Existence}

In this section we show that all algebras from the list in Section \ref{S:list} have cluster tilting modules. By Theorem \ref{T:GLS} this is true for preprojective algebras of type $\Delta$, and in Theorem \ref{thm:La_2_ct} we show that it also holds for mesh algebras of non-simply laced Dynkin type.

\subsection{Invariance under twists}

Let $\Delta$ be an orientation of a simply laced Dynkin diagram, and consider a group $G$ of automorphisms acting admissibly on the mesh category $\cC=\bK(\bZ\Delta)$. 
Let $A_\Delta$ be the quiver of $\Gamma_\Delta$ as in Section \ref{S:start module}. For $g \in G$ denote by $g(\Gamma_\Delta)$ the full subcategory of $\cC$ with indecomposable objects 
\[
	\{g(x) \mid x \text{ is indecomposable in } \Gamma_\Delta\}.
\]
Its isomorphism classes of indecomposable objects are the vertices in the full subquiver $g(A_\Delta)$ of $\bZ \Delta$ with vertices $\{ g(x) \mid x \in A_\Delta\}$, where $g$ is considered as the induced graph automorphism of $\bZ \Delta$. Recall from Section \ref{S:start module} that $I(x)$ denotes the $\cC$-module with support on $A_\Delta$, which, as a $\Gamma_\Delta$-module, is injective with socle $S(x)$ for $x$ a vertex in $A_\Delta$. 

\begin{lemma} Let $g \in G$. The module $g_*I(x)$ is the $\cC$-module supported on
$g(A_\Delta)$ which is injective as a module for $g(\Gamma_\Delta)$ with socle
$S(g(x))$.
\end{lemma}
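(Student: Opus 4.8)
The plan is to unwind the definition of the twist functor $g_*$ and transport structure along the category automorphism $g$. Recall that $g_*(M) = M \circ g^{-1}$ as a $\bK$-linear functor from $\cC$ to vector spaces. First I would note that since $g$ is induced by a graph automorphism of $\bZ\Delta$, it restricts to an isomorphism of quivers $A_\Delta \to g(A_\Delta)$ and hence to an equivalence of $\bK$-linear categories $\Gamma_\Delta \to g(\Gamma_\Delta)$; call this equivalence $g$ as well by abuse of notation. The key formal observation is that precomposition with (the inverse of) a category equivalence sends modules to modules and is itself an equivalence $\Modu \Gamma_\Delta \to \Modu g(\Gamma_\Delta)$ which is exact, preserves simples, injectives, socles, and supports, because all of these notions are intrinsic to the abelian category and its set of (iso-classes of) indecomposable objects together with the Hom-spaces between them — all of which $g$ matches up.

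Next I would carry out the identification in three steps. Step one: the support of $g_*I(x)$. Since $I(x)$ is supported on $A_\Delta$, meaning $I(x)(y) = 0$ unless $y$ is (isomorphic to) a vertex of $A_\Delta$, we get $g_*I(x)(z) = I(x)(g^{-1}(z))$, which is nonzero only if $g^{-1}(z)$ lies in $A_\Delta$, i.e.\ only if $z$ lies in $g(A_\Delta)$. So $g_*I(x)$ is supported on $g(A_\Delta)$, hence is naturally a module for $g(\Gamma_\Delta)$. Step two: injectivity. Under the equivalence $\Modu \Gamma_\Delta \to \Modu g(\Gamma_\Delta)$ given by restriction/precomposition with $g^{-1}$, the module $I(x)$ goes to $g_*I(x)$; since $I(x)$ is injective in $\modu \Gamma_\Delta$ and an equivalence of abelian categories preserves injective objects, $g_*I(x)$ is injective as a $g(\Gamma_\Delta)$-module. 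Step three: the socle. An equivalence preserves socles, and it sends the simple module $S(x)$ at the vertex $x$ of $A_\Delta$ to the simple module at the vertex $g(x)$ of $g(A_\Delta)$, which is exactly $S(g(x))$; hence the socle of $g_*I(x)$ is $S(g(x))$. Combining the three steps gives the claim.

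The one genuine point requiring care — and the place I expect the main friction — is the bookkeeping between "$\cC$-module supported on $A_\Delta$" and "$\Gamma_\Delta$-module": one has to check that the twist $g_*$ (which a priori is only an automorphism-twist of $\cC$-modules) really does correspond, under the support condition, to the honest equivalence $\Modu\Gamma_\Delta \to \Modu g(\Gamma_\Delta)$ obtained from the restriction of $g$. This is a routine but slightly fiddly naturality check: one verifies that for $M$ supported on $A_\Delta$ the $\cC$-module $g_*M$ is determined by its restriction to $g(\Gamma_\Delta)$, and that this restriction is $M|_{\Gamma_\Delta}$ precomposed with $(g|_{\Gamma_\Delta})^{-1}$. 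Once that compatibility is in place, everything else is a formal consequence of the fact that category equivalences preserve all the structure in sight, and no computation with the mesh relations is needed.
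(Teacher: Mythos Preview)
Your proposal is correct and follows essentially the same approach as the paper's proof, which simply records that $g_*M = M(g^{-1}(-))$ shifts the support from $A_\Delta$ to $g(A_\Delta)$, sends the simple socle $S(x)$ to $S(g(x))$, and preserves injectivity. Your version is more explicit about why these facts hold (via the equivalence $\Modu\Gamma_\Delta \to \Modu g(\Gamma_\Delta)$), but the underlying argument is the same.
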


\begin{proof}
If $M$ is a $\cC$-module supported on $A_\Delta$ then $g_*(M) = M(g^{-1}(-))$ is
supported on $g(A_\Delta)$. (This can be thought of as $M$ ``shifted to $g(A_\Delta)$''.)
If $M$ has a simple socle $S(x)$ then  $g_*(M)$ has simple socle
$S(g(x))$. If $M$ is injective
as a module for $\Gamma_\Delta$ then the shift of $M$ by $g$ is injective as a
module for $g(\Gamma_\Delta)$.
\end{proof}

\begin{definition}
	Let $\cB$ be a $\bK$-linear category, and let $g$ be an automorphism of $\cB$. A module $M$ in $\modu \cB$ is called {\em $g$-equivariant}, if $g_*(M) \cong M$.
\end{definition}

\begin{corollary}\label{C:invariant} Assume $\sigma$ is an
automorphism of $\cC$ induced by a quiver automorphism of $\Delta$. Then the module $\bigoplus_{x\in A_\Delta} I(x)$ is $\sigma$-equivariant.
\end{corollary}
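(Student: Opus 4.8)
The plan is to reduce the statement to the lemma immediately preceding it, applied to the automorphism $\sigma$. Recall that $\sigma$ is induced by a quiver automorphism of $\Delta$, so in particular it fixes the orientation of $\Delta$; this is exactly the setting of case (ii) in Section \ref{S:list}, where $\sigma$ acts on vertices and arrows of $\bZ\Delta$ by $q_i \mapsto \sigma(q)_i$ and $a_i \mapsto \sigma(a)_i$. The key geometric input I would isolate first is that $\sigma$ maps the subquiver $A_\Delta$ of $\bZ\Delta$ to itself, i.e.\ $\sigma(A_\Delta) = A_\Delta$ as a subquiver, and hence $\sigma(\Gamma_\Delta) = \Gamma_\Delta$ as a full subcategory of $\cC$. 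This holds because $A_\Delta$ is the Auslander--Reiten quiver of $\bK\Delta^{op}$ embedded in $\bZ\Delta$, and its vertex set is a ``$\tau$-slice'' region of $\bZ\Delta$ that is cut out purely by combinatorial data of $\Delta$ which $\sigma$ preserves (since $\sigma$ fixes the orientation); I would give the short explicit check here, e.g.\ using the standard description of $A_\Delta$ in terms of the positive roots / the indecomposables of $\bK\Delta^{op}$ and noting that $\sigma$ permutes these compatibly with the embedding into $\bZ\Delta$.

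Granting $\sigma(A_\Delta) = A_\Delta$, the preceding lemma (applied with $g = \sigma$) tells us that for each vertex $x \in A_\Delta$, the module $\sigma_*I(x)$ is the $\cC$-module supported on $\sigma(A_\Delta) = A_\Delta$ which is injective as a $\sigma(\Gamma_\Delta) = \Gamma_\Delta$-module with simple socle $S(\sigma(x))$. But that is precisely the defining description of $I(\sigma(x))$. Hence $\sigma_*I(x) \cong I(\sigma(x))$ for every $x \in A_\Delta$.

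Finally, since $\sigma$ is an automorphism of the quiver $A_\Delta$, the assignment $x \mapsto \sigma(x)$ is a bijection on the vertex set of $A_\Delta$. Therefore
\[
	\sigma_*\Bigl(\bigoplus_{x \in A_\Delta} I(x)\Bigr) \;=\; \bigoplus_{x \in A_\Delta} \sigma_*I(x) \;\cong\; \bigoplus_{x \in A_\Delta} I(\sigma(x)) \;=\; \bigoplus_{y \in A_\Delta} I(y),
\]
where the first equality uses that $\sigma_*$, being given by precomposition with $\sigma^{-1}$, commutes with direct sums, and the last equality is the reindexing $y = \sigma(x)$. This shows $\bigoplus_{x \in A_\Delta} I(x)$ is $\sigma$-equivariant.

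\textbf{Main obstacle.} The only real content beyond bookkeeping is the claim $\sigma(A_\Delta) = A_\Delta$, i.e.\ that the chosen copy of the Auslander--Reiten quiver of $\bK\Delta^{op}$ inside $\bZ\Delta$ is stable under the diagram automorphism $\sigma$. This is intuitively clear because $\sigma$ respects the orientation and $A_\Delta$ depends only on $\Delta$ with its orientation, but making it precise requires recalling how $A_\Delta$ is embedded in $\bZ\Delta$ (as in \cite[Section~2.4]{GLS-1}) and checking that the embedding is $\sigma$-equivariant; I expect this to be a short but slightly fiddly verification, and everything else follows formally.
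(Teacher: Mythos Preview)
Your proposal is correct and matches the paper's intended argument: the paper states this as a corollary with no proof, so it is meant to follow immediately from the preceding lemma applied with $g = \sigma$, exactly as you outline. Your concern about $\sigma(A_\Delta) = A_\Delta$ is slightly overstated: since $\sigma$ is by hypothesis an automorphism of the quiver $\Delta$ itself, it induces an autoequivalence of $\modu \bK\Delta^{op}$ commuting with the Auslander--Reiten translation, hence permutes the vertices of $A_\Delta$ inside $\bZ\Delta$; no fiddly verification is needed beyond this observation.
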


\subsection{Existence of cluster tilting modules}

\begin{thm}\label{thm:La_2_ct}
	Let $\La$ be a mesh algebra of non-simply laced Dynkin type, i.e. one of the algebras
		\begin{enumerate}
			\item{$P(\bB_k)$ for $k \geq 2$},
			\item{$P(\bC_n)$ for $n \geq 3$},
			\item{$P(\bG_2)$},
			\item{$P(\bF_4)$}.
		\end{enumerate}
	Then $\La$ has cluster tilting modules.
\end{thm}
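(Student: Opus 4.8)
The plan is to realize each of these mesh algebras as an orbit category $\cC/G$ of the mesh category $\cC = \bK(\bZ\Delta)$ for a suitable simply laced $\Delta$, and then transport the cluster tilting module $I_\Delta$ of $P(\Delta) = \cC/\langle\tau\rangle$ through the tower of covering functors. Concretely, for $\La$ one of $P(\bB_k), P(\bC_n), P(\bG_2), P(\bF_4)$, Section \ref{S:list} gives $\La \cong \cC/\langle \sigma\tau\rangle$ where $\sigma$ is the relevant graph automorphism of $\Delta \in \{\bA_{2k-1}, \bD_{n+1}, \bD_4, \bE_6\}$ of order $2$ or $3$. The group $G = \langle\sigma\tau\rangle$ is infinite cyclic, hence a finitely generated free abelian group, and it acts admissibly on $\cC$ since $\sigma\tau$ (and all its nontrivial powers) acts freely on $\bZ\Delta$ with finitely many orbits. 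So Theorem \ref{C:move} (and Remark \ref{R:move}) applies: to produce a basic cluster tilting module for $\La$ it suffices to produce a locally bounded $\langle\sigma\tau\rangle$-equivariant cluster tilting subcategory of $\modu\cC$ with finitely many $\langle\sigma\tau\rangle$-orbits of indecomposables.

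First I would use Theorem \ref{T:GLS} together with the other instance of Theorem \ref{C:move}, applied to $G' = \langle\tau\rangle$: the start module $I_\Delta$ is a basic cluster tilting module of $P(\Delta) = \cC/\langle\tau\rangle$, so it corresponds under the bijection of Remark \ref{R:move} to a locally bounded $\langle\tau\rangle$-equivariant cluster tilting subcategory $\cM$ of $\modu\cC$ with finitely many $\langle\tau\rangle$-orbits of indecomposables; by construction $\cM = \add\big(\bigoplus_{n\in\bZ} \tau^n_*\,\bigoplus_{x\in A_\Delta} I(x)\big)$, the $\langle\tau\rangle$-closure of the ``lift'' $\bigoplus_{x\in A_\Delta} I(x)$ of the start module. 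The key point is then to check that $\cM$ is in fact $\langle\sigma\tau\rangle$-equivariant, equivalently (since it is already $\tau$-equivariant) that it is $\sigma$-equivariant. This is exactly what Corollary \ref{C:invariant} delivers: $\bigoplus_{x\in A_\Delta} I(x)$ is $\sigma$-equivariant, and $\sigma$ commutes with $\tau$, so $\sigma_*$ permutes the summands $\tau^n_*\bigoplus_x I(x)$ and hence fixes $\cM$. A cluster tilting subcategory is in particular maximal rigid, and being $\langle\sigma\tau\rangle$-equivariant is a property that only depends on the subcategory, so $\cM$ is a locally bounded $\langle\sigma\tau\rangle$-equivariant cluster tilting subcategory of $\modu\cC$; it still has finitely many $\langle\sigma\tau\rangle$-orbits of indecomposables since $\langle\tau\rangle$ has finite index in $\langle\sigma\tau\rangle\cdot\langle\tau\rangle$ acting on the (finitely many $\tau$-orbits of) indecomposables in $\cM$ — more precisely $\langle\sigma\tau\rangle$ contains $\langle\tau^{\mathrm{ord}(\sigma)}\rangle$, which already has finitely many orbits. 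Applying the bijection of Remark \ref{R:move} for $G = \langle\sigma\tau\rangle$ then yields a basic cluster tilting module $F_*(\bigoplus_{x\in A_\Delta} I(x))$ for $\cC/\langle\sigma\tau\rangle \cong \La$, and the case analysis over (1)–(4) is covered uniformly since each $\La$ is of the required form.

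I would organize the write-up as: (i) recall $\La \cong \cC/\langle\sigma\tau\rangle$ from Section \ref{S:list} and verify the hypotheses of Theorem \ref{C:move} for $G = \langle\sigma\tau\rangle$ (free abelian of rank $1$, admissible, finitely many orbits); (ii) invoke Theorem \ref{T:GLS} and Remark \ref{R:move} for $\langle\tau\rangle$ to get the $\langle\tau\rangle$-equivariant cluster tilting subcategory $\cM \subseteq \modu\cC$ lifting $I_\Delta$; (iii) use Corollary \ref{C:invariant} plus $\sigma\tau = \tau\sigma$ to upgrade $\cM$ to a $\langle\sigma\tau\rangle$-equivariant cluster tilting subcategory, checking the finite-orbit condition; (iv) push down via Remark \ref{R:move} for $\langle\sigma\tau\rangle$ to conclude. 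The main obstacle, and the only place requiring genuine care rather than bookkeeping, is step (iii): one must be sure that ``cluster tilting subcategory of $\modu\cC$'' is genuinely a property of the subcategory (it is — functorial finiteness plus the two Ext-orthogonality conditions of Definition \ref{D:2-ct}) and that $\sigma$-equivariance of the \emph{generating module} $\bigoplus_{x} I(x)$ really does promote to $\sigma$-equivariance of the $\langle\tau\rangle$-closure $\cM$; this is where the commutation $\sigma\tau=\tau\sigma$ and the compatibility $\sigma_*\tau_* = \tau_*\sigma_*$ are used. Everything else is a direct application of the quoted results, so the proof is short.
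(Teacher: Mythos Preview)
Your proposal is correct and follows essentially the same approach as the paper: pull back $I_\Delta$ along the covering $\cC \to P(\Delta)$ via Theorem~\ref{C:move}/Remark~\ref{R:move} to a $\langle\tau\rangle$-equivariant cluster tilting subcategory of $\modu\cC$, use Corollary~\ref{C:invariant} and $\sigma\tau=\tau\sigma$ to see it is also $\langle\sigma\tau\rangle$-equivariant, then push down to $\La=\cC/\langle\sigma\tau\rangle$. Your treatment of the finite-orbit condition (via $\tau^{\mathrm{ord}(\sigma)}\in\langle\sigma\tau\rangle$) is in fact slightly more explicit than the paper's.
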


\begin{proof}
	In each of the cases respectively, let $\Delta$ be the quiver and $\sigma$ the automorphism of $\cC$ induced by the graph automorphism, also denoted by $\sigma$, on the underlying diagram of $\Delta$ listed below:
	\begin{enumerate}
		\item{For  $k \geq 2$ and $\La = P(\bB_k)$ let $\Delta$ be the orientation of $\bA_{2k-1}$ from Figure \ref{fig:orientation} and let $\sigma$ be the graph automorphism of $\bA_{2k-1}$ of order two, i.e.\ the automorphism given by the permutation of vertices $(1,2)(3,4)\cdots(2k-3,2k-2)$.}
		\item{For $n \geq 3$ and $\La = P(\bC_n)$ let $\Delta$ be the orientation of $\bD_{n+1}$ from Figure \ref{fig:orientation}
 		and let $\sigma$ be the graph automorphism of $\bD_{n+1}$ of order two, i.e.\ the automorphism given by the permutation of vertices $(0,1)$.}
				\item{For $\La = P(\bG_2)$ let $\Delta$ be the orientation of $\bD_4$ from Figure \ref{fig:orientation} (setting $n = 3$)
		and let $\sigma$ be the graph automorphism of $\bD_4$ of order three, i.e.\ the automorphism given by the permutation of vertices $(0,1,3)$.}
		\item{For $\La = P(\bF_4)$ let $\Delta$ be the orientation of $\bE_6$ from Figure \ref{fig:orientation}
		and let $\sigma$ be the graph automorphism of $\bE_6$ of order two, i.e.\ the automorphism given by the permutation of vertices $(1,2)(3,4)$.}
	\end{enumerate}
	Consider now in each case the preprojective algebra $P(\Delta)$. By Theorem \ref{T:GLS}, the start module $I_\Delta$ of $P(\Delta)$ with respect to $\Delta$ is cluster tilting. Consider the push-down 
	\[
		F_* \colon \modu \cC \to \modu P(\Delta)
	\] 
of the covering functor. By Corollary \ref{C:move} and Remark \ref{R:move} the subcategory $F_*^{-1}(I_\Delta)$ is cluster tilting in $\modu \cC$, it is $\langle \tau \rangle$-equivariant and has finitely many $\langle \tau \rangle$-orbits of indecomposable objects. Since $\Delta$ is invariant under $\sigma$, by Corollary \ref{C:invariant} the $\cC$-module $\oplus_{x\in A_\Delta} I(x)$ is $\sigma$-equivariant 
and hence so is 
\[
	F_*^{-1}(I_\Delta) = F_*^{-1}(F_*(\oplus_{x\in A_\Delta} I(x))).
\] 
Therefore, the subcategory $F_*^{-1}(I_\Delta)$ is $\langle \sigma \tau \rangle$-equivariant, and there are finitely many $\langle \sigma \tau \rangle$-orbits of indecomposable objects.
Consider now the push-down
	\[
		\tilde{F}_* \colon \modu \cC \to \modu \cC/\langle \sigma \tau \rangle \cong \modu \La.
	\]
	Again by Theorem \ref{C:move} and Remark \ref{R:move}, the push-down $I_\La = \tilde{F}_*(\oplus_{x\in A_\Delta} I(x))$ is a cluster tilting module of $\La$, which proves the claim.
	
\end{proof}

\begin{remark}
	Note that with the proof of Theorem \ref{thm:La_2_ct} we provide a new existence proof, which is different to, and shorter than, the proof given by Asai in \cite{Asai}, which we only discovered after the completion of this paper. However, \cite{Asai} states additionally that the mesh algebra $P(\bL_n)$ does not have a cluster tilting module that is stably invariant under $\Sigma^{-2}\circ S$, where $\Sigma$ is the suspension and $S$ is the Serre functor on the stable category $\underline{\modu}P(\bL_n)$. We observe that, using \cite[Proposition~3.4]{IY}, it follows directly that a mesh algebra $\La$ has a cluster tilting module if and only if it is of Dynkin type.
\end{remark}

\section{Automorphisms and $\Ext^1$-symmetry for mesh algebras}\label{S:Automorphisms}

Throughout this section we denote by $\La$ a mesh algebra of non-simply laced Dynkin type, i.e.\ as in Theorem \ref{thm:La_2_ct}. 
Further, let $\Delta$ be the respective Dynkin quiver and $\sigma$ the respective automorphism of $\cC = \bK(\bZ\Delta)$, such that $\La = \cC/\langle \sigma \tau \rangle$, as outlined in the list from Section \ref{S:list}.

We recall and further investigate certain automorphisms of $\Lambda$, and show that for modules which are equivariant under a specific automorphism, we obtain a useful $\Ext^1$-symmetry. We first start with some important facts about general self-injective algebras.

\subsection{Automorphisms of a self-injective algebra} \label{S:Nakayama automorphism}

Let $A$ be a finite dimensional self-injective algebra. For an algebra automorphism $\varphi$ and a module $M$ in $\modu A$, we denote from now on by ${}_\varphi M$, instead of by $\varphi_*(M)$, the twist of $M$ by $\varphi$; since we now take the point of view of finite dimensional algebras and their naturally arising automorphisms, we switch to this more algebraic notation. Let $\cN$ be the Nakayama functor on $\modu A$, that is
\[
	\cN = D\Hom_A(-,A),
\]
where $D(-) = \Hom_\bK(-,\bK)$. It is well known that $D(A)$ is isomorphic to the twisted module
\[
	D(A) \cong {}_{\eta^{-1}}A_{\mathrm{id}},
\]
where $\eta$ is an algebra automorphism, which is constructed by Yamagata in \cite{Ya} using a non-degenerate associative bilinear form. The morphism $\eta$ is called a {\em Nakayama automorphism of $A$}. Note that for any module $Y$ in $\modu A$ we have
\begin{eqnarray*}
	\cN Y = D\Hom_A(Y,A) \cong DA \otimes_A Y \cong {}_{\eta^{-1}}A \otimes_A Y \cong {}_{\eta^{-1}}Y.
\end{eqnarray*}

We denote by $A^e$ the enveloping algebra of $A$ and by $\Omega$ the first syzygy in a bi-module resolution; note that left $A^e$ modules are just $A$-$A$-bimodules. Putting the next lemma into a wider context, note that if some syzygy of $A$ as an $A^e$-module is isomorphic to a twist of $A$ as a bimodule, then it follows that all left $A$-modules have complexity $\leq 1$: The terms of a minimal bimodule resolution of $A$ have bounded dimension, and tensoring this with a left module $Y$ yields a projective resolution of $Y$, where the terms still have bounded dimension. Therefore, the condition in \cite{EH} for the existence of cluster tilting modules is satisfied, and potentially $A$ might have cluster tilting modules. Here, we focus on algebras where $\Omega^3_{A^e}(A)$ is a twist of $A$ as a bimodule.

\begin{lemma}\label{L:Ext twist}
	Assume $A$ is a finite dimensional self-injective algebra such that 
	\[
		\Omega^3_{A^e}(A) \cong {}_\mu A_{\mathrm{id}}
	\]
	for some automorphism $\mu$ of $A$ and let $\eta$ be a Nakayama automorphism of $A$. 
	Consider the automorphism $\gamma = \eta^{-1} \circ \mu$
	on $A$, and let $X$ and $Y$ be modules in $\modu A$. Then
	\[
		D\Ext^1_A(Y,X) \cong \Ext^1_A(X, {}_\gamma Y).
	\]
\end{lemma}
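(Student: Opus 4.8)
The plan is to derive the desired $\Ext^1$-symmetry from Auslander--Reiten duality (the functorial form of the AR formula), combined with the fact that for a self-injective algebra the AR translate $\tau_{A}$ is $\cN \circ \Omega^2$, and with the hypothesis identifying $\Omega^3_{A^e}(A)$ with a bimodule twist of $A$. Concretely, I would first recall the functorial AR formula
\[
	D\,\ol{\Hom}_A(Y,X) \cong \Ext^1_A(X,\tau_A Y),
\]
valid for $X,Y \in \modu A$, where $\ol{\Hom}$ denotes the stable Hom modulo injectives (equivalently, since $A$ is self-injective, modulo projectives). Since every short exact sequence $0\to X\to E\to Y\to 0$ with $E$ projective-injective represents the zero class in $\Ext^1$, and since $\Ext^1_A(X,-)$ vanishes on injectives, one checks that $\ol{\Hom}_A(Y,X)$ may be replaced by $\Ext^1_A(Y,X)$ up to the relevant identifications; more precisely I would instead use the dual/stable form directly, namely that for self-injective $A$ one has $\Ext^1_A(Y,X)\cong \ol{\Hom}_A(\Omega Y, X)$ and hence $D\Ext^1_A(Y,X)\cong D\,\ol{\Hom}_A(\Omega Y,X)\cong \Ext^1_A(X,\tau_A\Omega Y)$ by the AR formula.

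Next I would compute $\tau_A \Omega Y$ in terms of the given data. For a self-injective algebra, $\tau_A = D\Tr = \cN\circ \Omega^2$ on $\modu A$ (using that $\Tr$ and $\Omega^2$ agree up to projectives and up to the duality conventions), so $\tau_A\Omega Y \cong \cN \Omega^3 Y$. Now the key input: the minimal bimodule resolution of $A$ has $\Omega^3_{A^e}(A)\cong {}_\mu A_{\mathrm{id}}$, and tensoring a projective bimodule resolution of $A$ over $A$ with the left module $Y$ yields a projective resolution of $Y$; hence $\Omega^3_A(Y) \cong \Omega^3_{A^e}(A)\otimes_A Y \cong {}_\mu A_{\mathrm{id}}\otimes_A Y \cong {}_\mu Y$, where the last step uses that twisting by an automorphism on the right by $\mathrm{id}$ does nothing and ${}_\mu A\otimes_A Y \cong {}_\mu Y$. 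Combining with the computation $\cN Y\cong {}_{\eta^{-1}}Y$ already recorded in the excerpt, we get
\[
	\tau_A\Omega Y \;\cong\; \cN\,\Omega^3 Y \;\cong\; \cN({}_\mu Y) \;\cong\; {}_{\eta^{-1}}({}_\mu Y) \;\cong\; {}_{\eta^{-1}\circ\mu}\,Y \;=\; {}_\gamma Y.
\]
Substituting this into $D\Ext^1_A(Y,X)\cong \Ext^1_A(X,\tau_A\Omega Y)$ gives exactly $D\Ext^1_A(Y,X)\cong \Ext^1_A(X,{}_\gamma Y)$.

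The main obstacle, and the step needing the most care, is getting the bookkeeping of twists and composition order exactly right: whether ${}_\mu(-)$ or $(-)_\mu$ appears, whether $\gamma=\eta^{-1}\mu$ or $\mu\eta^{-1}$, and how ${}_\varphi(-)$ interacts with tensoring and with $\cN$. In particular one must verify the identity ${}_{\eta^{-1}}({}_\mu Y)\cong {}_{\eta^{-1}\circ\mu}Y$ with the composition in the stated order, consistent with the convention ${}_\varphi M = M(\varphi^{-1}(-))$ fixed earlier; and one must check that $\Omega^3_A(Y)\cong {}_\mu Y$ rather than its inverse twist, which hinges on $\Omega^3_{A^e}(A)\cong{}_\mu A_{\mathrm{id}}$ acting by $a\otimes y\mapsto \mu(a)y$. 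A secondary point is justifying $\ol{\Hom}_A(\Omega Y,X)\cong \Ext^1_A(Y,X)$ and the AR formula in the stable (self-injective) setting without spurious projective/injective correction terms; this is standard but should be stated with the precise hypotheses. Once the conventions are pinned down, the argument is a short chain of natural isomorphisms.
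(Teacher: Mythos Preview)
Your proposal is correct and is essentially the same argument as the paper's, phrased in Auslander--Reiten translate language rather than Serre functor language. The paper works in $\underline{\modu}A$, uses $\Ext^1_A(Y,X)\cong\underline{\Hom}(Y,\Omega^{-1}X)$, applies Serre duality with $\mathbb{S}=\Omega\cN$, and then shifts to reach $\underline{\Hom}(\Omega X,\cN\Omega^3 Y)$; your route via $D\,\ol{\Hom}_A(\Omega Y,X)\cong\Ext^1_A(X,\tau_A\Omega Y)$ with $\tau_A=\cN\Omega^2$ lands at exactly the same expression $\Ext^1_A(X,\cN\Omega^3 Y)$, after which both proofs finish identically using $\Omega^3 Y\cong{}_\mu Y$ and $\cN\cong{}_{\eta^{-1}}(-)$.
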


\begin{proof}
The first assumption implies that for any left module $M$ of $A$ we have $\Omega^3(M) \cong {}_\mu M$.
	We underline whenever we work in the stable category $\underline{\modu}A$. Recall that suspension in $\underline{\modu}A$ is given by $\Omega^{-1}$ and that we have the Serre functor $\Omega \cN$. 
	Since $\cN$ is a self-Morita equivalence for $A$ (cf.\ for example Zimmermann's book \cite[Lemma~4.5.6]{Zimmermann}), it commutes with $\Omega$. So we obtain
	\begin{eqnarray*}
		D\Ext^1_A(Y,X) 	&\cong& D\underline{\Hom}(Y,\Omega^{-1}X)
					\cong \underline{\Hom}(\Omega^{-1}X,\Omega \cN Y)
					\cong  \underline{\Hom}(\Omega X,\Omega^3 \cN Y)
					 \cong \underline{\Hom}(\Omega X, \cN \Omega^3 Y) \\
					 &\cong&  \underline{\Hom}(\Omega X,\cN ({}_\mu Y))
					 \cong  \underline{\Hom}(\Omega X, {}_{\eta^{-1} \circ \mu}Y)
					 \cong \underline{\Hom}( X, \Omega^{-1} ({}_{\eta^{-1} \circ \mu}Y))
					 \cong\Ext^1_A (X, {}_{\gamma}Y).
	\end{eqnarray*}
\end{proof}

Consider now $\La$, our mesh algebra of Dynkin type. We know that $\Omega^3_{\Lambda^e}(\Lambda) \cong {}_\mu \Lambda_{\mathrm{id}}$ for some automorphism $\mu$ (cf.\ Section \ref{S:third syzygy}), so we will be able to apply Lemma \ref{L:Ext twist} to $\La$. In \cite{AS}, Andreu Juan and Saor\'{i}n explicitly describe the following automorphisms of $\Lambda$.
\begin{itemize}
	\item{The Nakayama automorphism $\eta$, which is induced by an automorphism $\theta$ of $\cC$.
	}
	\item{An algebra automorphism $\mu$ of $\Lambda$ such that $\Omega^3_{\La^e}(\La) \cong {}_{\mu}\La_1$. 
In particular they determine the period of $\La$ as a bimodule, and hence the order of $\mu$ in the factor group modulo inner automorphisms.}
\end{itemize}
We fix $\eta$ and $\mu$ to be these automorphisms for $\La$, and, for convenience of notation, discuss the latter in slightly more detail in 
Section \ref{S:third syzygy}. 

\subsection{The third syzygy}\label{S:third syzygy}

Consider the automorphism $\vartheta$ on $\cC$ which fixes each vertex of $\bZ \Delta$ and acts on arrows by
\[
	\vartheta(a) = (-1)^{s(\tau^{-1}(a)) + s(a)}a,
\]
where $s$ denotes the signature map described in \cite[Proposition~3.3]{AS}. To be more precise, first \cite{AS} define a set $X$ of 
arrows of $\cC$ which is invariant under the group $G$,
so that every mesh contains precisely one arrow in $X$.
Then the signature map $s$ is defined from the set of arrows of $\cC$ to $\{1, 0\}$ by 
$s(a) = 1$ for $a\in X$ and $s(a)=0$ otherwise. 

If $\Delta$ is of type other than $\bA_{2k-1}$, then $\vartheta$ acts as the identity (cf.\ \cite[Remark~5.4]{AS}). 
The ungraded version of \cite[Corollary~5.5]{AS} states the following:

\begin{proposition}[{\cite[Corollary~5.5]{AS}}]\label{P:Corollary AS}
The automorphism $\mu$ of $\Lambda$, such that $\Omega^3_{\La^e}(\La) \cong {}_{\mu}\La_1$, is induced by the automorphism
\begin{enumerate}
	\item $\theta \circ \tau^{-1} \circ \vartheta$ if $\Delta$ is of type $\bA_{2k-1}$, and
	\item $\theta \circ \tau^{-1}$ otherwise,
\end{enumerate}
of $\cC$, where $\theta$ is the automorphism of $\cC$ as in \cite[Corollary~5.5]{AS} inducing the Nakayama automorphism $\eta$ of $\Lambda$.
\end{proposition}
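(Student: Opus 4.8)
The plan is to obtain Proposition~\ref{P:Corollary AS} as the ungraded shadow of its graded refinement, \cite[Corollary~5.5]{AS}. Recall the strategy behind the graded statement: Andreu Juan and Saor\'{i}n construct an explicit minimal projective resolution of $\La$ as a \emph{graded} $\La^e$-module, show it is periodic (of period $6$ up to a shift of grading), and read off the third syzygy. In that resolution the translation $\tau^{-1}$ records the ``rotation'' of $\bZ\Delta$ built into the mesh relations, the automorphism $\theta$ records the twist coming from the non-degenerate associative bilinear form defining the self-injective structure (and hence the Nakayama automorphism $\eta$), and $\vartheta$ records a sign correction which, by \cite[Remark~5.4]{AS}, is trivial outside type $\bA_{2k-1}$ — this is precisely why case (2) carries no $\vartheta$. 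So once the graded statement is granted, only a degree-forgetting argument remains.

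For that step I would apply the forgetful functor $U$ from graded $\La^e$-modules to $\La^e$-modules. It is exact and faithful, and it sends graded projectives to projectives, since the graded indecomposable projective $\La^e$-modules are exactly the grading shifts of the ungraded ones. A complex of projective $\La^e$-modules resolving $\La$ is the minimal projective resolution precisely when all matrix entries of its differentials lie in $\rad \La^e$; because $\rad \La^e$ is a homogeneous ideal, this condition does not see the grading, so $U$ carries the minimal graded resolution of $\La$ to a minimal projective bimodule resolution of $\La$. Consequently $\Omega^3_{\La^e}(\La)$ is $U$ applied to the graded third syzygy, and since $U$ sends the graded twisted bimodule ${}_{\mu}\La\langle 1\rangle_{\mathrm{id}}$ to ${}_{\mu}\La_{\mathrm{id}}$, we get $\Omega^3_{\La^e}(\La) \cong {}_{\mu}\La_{1}$ with $\mu$ induced by the stated automorphism of $\cC$.

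It remains to check that $\theta\circ\tau^{-1}\circ\vartheta$ (respectively $\theta\circ\tau^{-1}$) really does descend to an automorphism of $\La = \cC/\langle\sigma\tau\rangle$, i.e.\ that it normalises $\langle\sigma\tau\rangle$. This holds because $\tau$ is central in the relevant group of automorphisms of $\cC$ and both $\theta$ and $\vartheta$ commute with $\tau$ and with $\sigma$: $\theta$ is built from the bilinear form used to define $\eta$, which is $G$-invariant, and $\vartheta(a) = (-1)^{s(\tau^{-1}(a)) + s(a)}a$ is built from the signature map $s$, which is $G$-invariant by construction, so $\vartheta$ is $G$-equivariant. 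I expect the only genuine content beyond this bookkeeping to be the graded computation itself — the construction and verification of the periodic minimal bimodule resolution of $\La$ and the identification of its third term — which is the main obstacle and is exactly what \cite[Corollary~5.5]{AS} supplies; here we simply import it.
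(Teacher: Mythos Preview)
Your proposal is correct and, in fact, goes beyond what the paper does: the paper gives no proof of this proposition at all, treating it purely as a citation (the sentence introducing it reads ``The ungraded version of \cite[Corollary~5.5]{AS} states the following''). Your degrading argument via the forgetful functor from graded to ungraded $\La^e$-modules, together with the observation that minimality is preserved because $\rad \La^e$ is homogeneous, is the standard and correct way to pass from the graded statement in \cite{AS} to the ungraded one recorded here; the check that the automorphisms descend is likewise fine. So there is nothing to compare against --- you have supplied the (routine) justification that the paper leaves implicit.
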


\subsection{The automorphism $\gamma$}\label{S:gamma}

Consider our mesh algebra $\La$ of Dynkin type with automorphisms $\mu$ as discussed in Section \ref{S:third syzygy} and Nakayama automorphism $\eta$, and as in Lemma \ref{L:Ext twist} set
\[
	\gamma = \eta^{-1} \circ \mu.
\]

Recall that $\La = \cC/G$ where $\cC = K(\bZ\Delta)$ is the mesh category and 
$G = \langle \sigma\tau \rangle$, for $\sigma$ and $\Delta$ as in the list from Section \ref{S:list}. The automorphisms $\sigma$ and $\tau$ commute
with each element of $G$ and hence induce  the automorphisms
$\wt{\sigma}$ and $\wt{\tau}$ of $\La$. 
Moreover, since $\wt{\sigma}\wt{\tau}$ is the identity for $\cC/G$ it follows that $\wt{\sigma} = \wt{\tau}^{-1}$.

\begin{lemma}\label{L:order 2}
The automorphism $\gamma$ of $\La$ is equal to $\wt{\sigma}$, up to inner automorphism.
\end{lemma}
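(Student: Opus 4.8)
The plan is to compute $\gamma = \eta^{-1}\circ\mu$ directly from the explicit descriptions of $\eta$ and $\mu$ coming from \cite{AS}, and to match the result against $\wt\sigma$. By Proposition \ref{P:Corollary AS}, the automorphism $\mu$ of $\La$ is induced by $\theta\circ\tau^{-1}\circ\vartheta$ on $\cC$ (type $\bA_{2k-1}$) or by $\theta\circ\tau^{-1}$ otherwise, where $\theta$ is the automorphism of $\cC$ inducing the Nakayama automorphism $\eta$. So, working at the level of $\cC$, the composite $\eta^{-1}\circ\mu$ is induced by $\theta^{-1}\circ\theta\circ\tau^{-1}\circ\vartheta = \tau^{-1}\circ\vartheta$ in the $\bA_{2k-1}$ case and by $\theta^{-1}\circ\theta\circ\tau^{-1} = \tau^{-1}$ otherwise. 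Since $\vartheta$ only rescales arrows by signs (and fixes all vertices), it induces an inner automorphism of $\La$ — indeed, any automorphism of a (mesh/path) algebra that fixes all vertices and rescales each arrow acts by conjugation by a diagonal unit supported on the vertex idempotents — so up to inner automorphisms, $\gamma$ is induced by $\tau^{-1}$ in every case.

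Next I would descend this to $\La = \cC/G$ with $G = \langle\sigma\tau\rangle$. The automorphism $\tau$ of $\cC$ commutes with $\sigma\tau$, hence induces $\wt\tau$ on $\La$, and the computation above shows that $\gamma = \wt\tau^{-1}$ up to inner automorphism. Finally, as already recorded in the text just before the lemma, the relation $\wt\sigma\wt\tau = \mathrm{id}$ on $\cC/G$ gives $\wt\sigma = \wt\tau^{-1}$, so $\gamma = \wt\sigma$ up to inner automorphism, which is exactly the claim. One subtlety to address carefully is that $\mu$ and $\eta$ are, strictly speaking, only well-defined up to inner automorphism to begin with (the Nakayama automorphism is canonical only modulo inner automorphisms, and likewise $\mu$ is pinned down only modulo inner automorphisms as the bimodule twist realizing $\Omega^3_{\La^e}(\La)$), so the statement ``up to inner automorphism'' in the lemma is the natural and only sensible one; I would make explicit that the identification of $\theta$ as the common factor in both $\eta$ and $\mu$ (as in Proposition \ref{P:Corollary AS}, quoting \cite[Corollary~5.5]{AS}) is what makes the cancellation $\theta^{-1}\circ\theta$ legitimate rather than merely heuristic.

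The main obstacle I anticipate is bookkeeping around the passage from $\cC$-automorphisms to $\La$-automorphisms, and in particular verifying cleanly that an automorphism of $\cC$ fixing vertices and rescaling arrows by signs descends to an inner automorphism of $\La$ (one must check the rescaling is $G$-equivariant, which it is because $\vartheta$ is defined via the $G$-invariant signature data of \cite{AS}, and then produce the conjugating unit). Everything else is formal: once $\gamma$ is identified on $\cC$ as $\tau^{-1}\circ\vartheta$ (resp.\ $\tau^{-1}$), the reduction to $\wt\sigma$ is immediate from $\wt\sigma = \wt\tau^{-1}$. I would organize the write-up as: (1) recall $\eta$ is induced by $\theta$ and $\mu$ by $\theta\circ\tau^{-1}\circ\vartheta$ (resp.\ $\theta\circ\tau^{-1}$) via Proposition \ref{P:Corollary AS}; (2) conclude $\gamma$ is induced by $\tau^{-1}\circ\vartheta$ (resp.\ $\tau^{-1}$) on $\cC$; (3) note $\vartheta$ induces an inner automorphism of $\La$, so $\gamma$ is induced by $\tau^{-1}$ up to inner automorphism; (4) descend to get $\gamma = \wt\tau^{-1} = \wt\sigma$ up to inner automorphism.
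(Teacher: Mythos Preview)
Your proposal is correct and follows essentially the same route as the paper's proof: compute $\gamma$ via Proposition~\ref{P:Corollary AS}, cancel the $\theta$-factor to get $\wt\tau^{-1}$ (times $\wt\vartheta$ in type $\bA$), and use $\wt\tau^{-1}=\wt\sigma$ together with $\wt\vartheta$ being inner. One caution: your parenthetical claim that \emph{any} vertex-fixing arrow-rescaling automorphism of a path/mesh algebra is inner is too strong in general (a cocycle condition on the scalars is needed), so you should, as you yourself anticipate, actually produce the conjugating unit for this particular $\vartheta$; the paper likewise just asserts $\wt\vartheta$ is inner without spelling this out.
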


\begin{proof}
Recall that $\mu$ is induced by the automorphism of $\cC$ from Proposition \ref{P:Corollary AS}. In case (2) of Proposition \ref{P:Corollary AS}, when $\Delta$ is not of type $\bA_{2k-1}$, we obtain 
\[
	\gamma = \eta^{-1} \circ \mu = \eta^{-1} \circ \eta\circ \wt{\tau}^{-1} = \wt{\tau}^{-1} = \wt{\sigma}.
\] 

Now consider case (1), where $\sigma$ has order 2. 
Recall the definition of $\vartheta$ from the beginning of Section \ref{S:third syzygy}, via the signature map $s$.
We have that $\tau \vartheta(a) =  -\tau(a)$ 
and $\vartheta(\tau(a)) = -\tau(a)$ for an arrow $a$. So $\tau$ and $\vartheta$ commute.

Now by definition, the signature map $s$ is constant on $G$-orbits and commutes with $\sigma\tau$. Also, as maps on arrows, $s\circ(\tau^{-1} + 1)$ and 
$\sigma\tau$ commute.  Hence the automorphism $\vartheta$ of $\cC$ commutes with $\sigma\tau$. Then we get an induced automorphism $\wt{\vartheta}$ of $\La$.
It follows that
\[
	\mu = \eta\circ \wt{\tau}^{-1} \circ\wt{\vartheta}
\]
and we have
\[
	\gamma = \eta^{-1} \circ \mu = \eta^{-1} \circ \eta\circ \wt{\tau}^{-1}\circ\wt{\vartheta} = \wt{\sigma} \circ \wt{\vartheta},
\]
and the automorphism $\wt{\vartheta}$ of $\La$ is inner. 
\end{proof}

\subsection{$\Ext^1$-symmetry}

In \cite[Section~5]{GLS-3} mutation of rigid modules in preprojective algebras of simply laced Dynkin type is studied. The results presented there heavily rely on the fact that the category of finitely generated modules over a preprojective algebra is stably 2-Calabi-Yau, which affords a symmetry of $\Ext^1$-spaces that we do not in general find in our $\modu \Lambda$. We can however still exploit some of the methods from \cite[Section~5]{GLS-3} in our situation, relying on the following symmetry, which holds in particular when $A$ is $\La$, one of our mesh algebras.

\begin{proposition}\label{P:Ext-symmetry}
	Let $A$ be as in Lemma \ref{L:Ext twist}, and let $X$ and $Y$ be modules in $\modu A$. Assume that $Y$ is $\gamma$-equivariant. Then we have
	\[
		D\Ext^1_A(Y,X) \cong \Ext^1_A(X,Y).
	\]
\end{proposition}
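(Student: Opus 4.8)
The plan is to combine Lemma~\ref{L:Ext twist} with the hypothesis that $Y$ is $\gamma$-equivariant. Lemma~\ref{L:Ext twist} gives, for any modules $X$ and $Y$ in $\modu A$, a natural isomorphism
\[
	D\Ext^1_A(Y,X) \cong \Ext^1_A(X, {}_\gamma Y).
\]
Since $Y$ is $\gamma$-equivariant, by definition we have ${}_\gamma Y = \gamma_*(Y) \cong Y$ as $A$-modules (here I am identifying the two notations $\gamma_*(Y)$ and ${}_\gamma Y$ for the twist, as set up in Section~\ref{S:Nakayama automorphism}). Substituting this isomorphism into the right-hand side immediately yields $\Ext^1_A(X, {}_\gamma Y) \cong \Ext^1_A(X, Y)$, and hence $D\Ext^1_A(Y,X) \cong \Ext^1_A(X,Y)$, which is the claim.

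The only point that needs a word of care is that the isomorphism ${}_\gamma Y \cong Y$ is of $A$-modules, so that it induces an isomorphism on $\Ext^1_A(X,-)$; this is automatic since $\Ext^1_A(X,-)$ is a functor on $\modu A$. One should also note that $A$ being ``as in Lemma~\ref{L:Ext twist}'' is exactly the hypothesis that $\Omega^3_{A^e}(A) \cong {}_\mu A_{\mathrm{id}}$ for some automorphism $\mu$, which together with a choice of Nakayama automorphism $\eta$ is what defines $\gamma = \eta^{-1}\circ\mu$; so the statement is meaningful and Lemma~\ref{L:Ext twist} applies verbatim.

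There is essentially no obstacle here: the proposition is a direct corollary of Lemma~\ref{L:Ext twist} once the defining property of $\gamma$-equivariance is invoked. The substance of the argument lies entirely in Lemma~\ref{L:Ext twist} (the manipulation in the stable category using the Serre functor $\Omega\cN$, the identity $\Omega^3(M)\cong {}_\mu M$, and $\cN({}_\mu Y) \cong {}_{\eta^{-1}\circ\mu}Y$); the present statement simply packages the special case where the twist on $Y$ disappears. I would therefore write the proof in two or three lines, citing Lemma~\ref{L:Ext twist} and the definition of $\gamma$-equivariant module.

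\begin{proof}
By Lemma~\ref{L:Ext twist} we have a natural isomorphism $D\Ext^1_A(Y,X) \cong \Ext^1_A(X,{}_\gamma Y)$. Since $Y$ is $\gamma$-equivariant, ${}_\gamma Y \cong Y$ as $A$-modules, and hence $\Ext^1_A(X,{}_\gamma Y) \cong \Ext^1_A(X,Y)$. Combining these gives $D\Ext^1_A(Y,X) \cong \Ext^1_A(X,Y)$.
\end{proof}
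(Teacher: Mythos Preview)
Your proof is correct and matches the paper's own argument exactly: the paper's proof is the single line ``This follows directly from Lemma~\ref{L:Ext twist},'' and your proposal simply spells out that applying Lemma~\ref{L:Ext twist} together with ${}_\gamma Y \cong Y$ yields the claim.
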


\begin{proof}
	This follows directly from Lemma \ref{L:Ext twist}.
\end{proof}

\section{Mutation of rigid modules in mesh algebras}\label{S:Mutation}

Throughout this section, let $\La$ be a finite dimensional self-injective algebra with automorphism $\mu$ such that 
$\Omega^3_{\La^e}(\La) \cong {}_{\mu} \La _{\mathrm{id}}$ and Nakayama automorphism $\eta$. 
As before, we set $\gamma = \eta^{-1} \circ \mu$. Assume further that any basic maximal rigid module in $\modu \La$ is $\gamma$-equivariant. In particular, we will show in Theorem \ref{T:short list} that we can choose $\La$ to be any mesh algebra from the following list:
\begin{enumerate}
			\item{$P(\bB_k)$ for $k \geq 2$},
			\item{$P(\bC_n)$ for $n \geq 3$},
			\item{$P(\bF_4)$}.
		\end{enumerate}
Then we obtain the following result, as a corollary of Proposition \ref{P:Ext-symmetry}.

\begin{corollary}\label{C:Ext-symmetry}
	Let $T$ be a basic maximal rigid module in $\modu \La$. Then for every module $X$ in $\modu \La$ we have
	\[
		D\Ext^1_\La(T,X) \cong \Ext^1_\La(X,T).
	\]
\end{corollary}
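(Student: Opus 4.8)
The plan is straightforward: this is an immediate consequence of Proposition~\ref{P:Ext-symmetry} once we know that a basic maximal rigid module $T$ is $\gamma$-equivariant. By hypothesis on $\La$ (which we will verify in Theorem~\ref{T:short list} for the algebras $P(\bB_k)$, $P(\bC_n)$ and $P(\bF_4)$), every basic maximal rigid module in $\modu\La$ is $\gamma$-equivariant; moreover $\La$ is of the form required in Lemma~\ref{L:Ext twist}, since by assumption $\Omega^3_{\La^e}(\La)\cong {}_{\mu}\La_{\mathrm{id}}$ and $\La$ is finite dimensional self-injective.

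First I would note that the given $T$ is a module in $\modu\La$ which is $\gamma$-equivariant, i.e.\ ${}_\gamma T \cong T$. Then I would apply Proposition~\ref{P:Ext-symmetry} with $A = \La$, taking $Y = T$ (which is $\gamma$-equivariant) and $X$ an arbitrary module in $\modu\La$. This yields directly
\[
	D\Ext^1_\La(T,X) \cong \Ext^1_\La(X,T),
\]
as claimed.

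There is no real obstacle here: the content of the corollary is entirely front-loaded into the standing assumptions of the section (self-injectivity, the identification of $\Omega^3_{\La^e}(\La)$ as a twist of $\La$, and $\gamma$-equivariance of basic maximal rigid modules), so the proof is a one-line invocation of Proposition~\ref{P:Ext-symmetry}. The only thing worth flagging is that the genuinely substantive input, namely that basic maximal rigid modules really are $\gamma$-equivariant for the listed mesh algebras, is deferred to Theorem~\ref{T:short list}; it relies on Lemma~\ref{L:order 2}, which shows $\gamma = \wt{\sigma}$ up to inner automorphism, together with the fact that $\wt{\sigma}$ has order $\leq 2$ for all these types (equivalently, $\sigma\tau$ has order $\leq 2$ modulo $\langle\tau\rangle$), forcing any basic maximal rigid module to be stable under $\wt\sigma$. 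But for the present corollary we simply cite that hypothesis.
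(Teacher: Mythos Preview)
Your proof is correct and matches the paper's approach exactly: the paper simply states that the result is a corollary of Proposition~\ref{P:Ext-symmetry}, using the standing hypothesis that every basic maximal rigid module is $\gamma$-equivariant. Your additional commentary about Theorem~\ref{T:short list} and Lemma~\ref{L:order 2} is accurate context but not required for the proof itself.
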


\subsection{Equivariance of maximal rigid modules for mesh algebras}
Asking that ${}_\gamma T \cong T$ for any basic maximal rigid $\La$-module $T$ is not some elusive condition on the algebra $\La$: In fact, this holds for the majority of mesh algebras, as we show below.

\begin{lemma}\label{L:maximal rigids are invariant}
Assume
that $\gamma^2 = \mathrm{id}$.
Then every basic maximal rigid module $T$ in $\modu \La$ is $\gamma$-equivariant.
\end{lemma}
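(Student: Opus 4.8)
The plan is to use the $\Ext^1$-symmetry from Proposition~\ref{P:Ext-symmetry} together with the rigidity of $T$ to show that ${}_\gamma T$ sits in $\add(T)$, and then conclude equality by a counting (or "maximality works both ways") argument. First I would record the preliminary observation that since $\gamma$ is an automorphism of $\La$, the twist functor $X \mapsto {}_\gamma X$ is an exact autoequivalence of $\modu \La$; in particular it preserves indecomposability, direct sums, and $\Ext^1$-dimensions, so ${}_\gamma T$ is again basic and maximal rigid, and moreover ${}_\gamma({}_\gamma T) \cong {}_{\gamma^2} T \cong T$ using the hypothesis $\gamma^2 = \mathrm{id}$.

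Next I would show that $T \oplus {}_\gamma T$ is rigid. Rigidity of $T$ gives $\Ext^1_\La(T,T) = 0$, and applying the autoequivalence ${}_\gamma(-)$ gives $\Ext^1_\La({}_\gamma T, {}_\gamma T) = 0$. For the cross terms, the key input is the symmetry: by Proposition~\ref{P:Ext-symmetry}, for any module $X$ and any $\gamma$-equivariant module one has a duality between $\Ext^1$ in the two directions — but here neither $T$ nor ${}_\gamma T$ is assumed $\gamma$-equivariant yet, so I should instead apply Lemma~\ref{L:Ext twist} directly: $D\Ext^1_\La({}_\gamma T, T) \cong \Ext^1_\La(T, {}_\gamma({}_\gamma T)) \cong \Ext^1_\La(T, T) = 0$, where the last isomorphism again uses $\gamma^2 = \mathrm{id}$. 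Similarly $D\Ext^1_\La(T, {}_\gamma T) \cong \Ext^1_\La({}_\gamma T, {}_{\gamma}({}_\gamma({}_\gamma T)))$; since ${}_{\gamma^3} T \cong {}_\gamma T$, this is $\Ext^1_\La({}_\gamma T, {}_\gamma T) = 0$. (Alternatively, one can first apply ${}_\gamma(-)$ to one of these vanishings to deduce the other, saving a computation.) Hence both cross $\Ext^1$-spaces vanish and $T \oplus {}_\gamma T$ is rigid.

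Finally, maximality of $T$ forces ${}_\gamma T \in \add(T)$. Since ${}_\gamma(-)$ is an autoequivalence and $T$ is basic, ${}_\gamma T$ has the same number of non-isomorphic indecomposable summands as $T$; being a summand of $T$ (up to multiplicity, but $T$ is basic) it must therefore be all of $T$, i.e.\ the indecomposable summands of ${}_\gamma T$ are, up to isomorphism, exactly those of $T$. As both modules are basic, this gives ${}_\gamma T \cong T$, which is the claim.

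I do not expect a genuine obstacle here — the proof is short once Lemma~\ref{L:Ext twist} is in hand. The one point requiring a little care is the bookkeeping with iterated twists and the exponents of $\gamma$: one must consistently use $\gamma^2 = \mathrm{id}$ to collapse ${}_{\gamma^2}(-)$ and ${}_{\gamma^3}(-)$, and must be careful that Lemma~\ref{L:Ext twist} is applied with the correct module in each slot (it is stated as $D\Ext^1_A(Y,X) \cong \Ext^1_A(X, {}_\gamma Y)$, so the twist lands on the \emph{second} argument of the original $\Ext$). The other mild subtlety is justifying that "basic maximal rigid, contained in $\add(T)$ with $T$ basic" implies equality; this is immediate from the Krull--Schmidt property of $\modu \La$ and the fact that an autoequivalence sends a basic module to a basic module with the same number of indecomposable summands.
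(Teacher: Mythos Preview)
Your proof is correct and follows essentially the same route as the paper: both use Lemma~\ref{L:Ext twist} together with $\gamma^2=\mathrm{id}$ to kill the cross $\Ext^1$-terms and then invoke maximal rigidity. The only cosmetic difference is that the paper argues by contradiction with a single indecomposable summand ${}_\gamma T_i \notin \add(T)$, whereas you treat ${}_\gamma T$ as a whole and finish with a Krull--Schmidt counting argument; also note that in your second cross-term computation the lemma gives $D\Ext^1_\La(T,{}_\gamma T)\cong \Ext^1_\La({}_\gamma T,{}_\gamma T)$ directly, so the detour through ${}_{\gamma^3}T$ is unnecessary (though harmless).
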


\begin{proof}
	Assume as a contradiction that $T$ is not $\gamma$-equivariant, and write $T = \bigoplus_{j \in I} T_j$ with $T_j$ indecomposable. Then there is some $i \in I$ such that ${}_\gamma T_i \ncong T_j$ for all $j \in I$. By Lemma \ref{L:Ext twist} we have
	\[
		\Ext^1_\La(T, {}_\gamma T_i) \cong D\Ext^1_\La(T_i,T) = 0.
	\]
Furthermore, since $\gamma^2 = \mathrm{id}$ we have
	\[
		\Ext^1_\La({}_\gamma T_i, T) \cong \Ext^1_\La(T_i, {}_\gamma T) \cong D\Ext^1_\La(T,T_i) = 0.
	\]
Clearly, we have that ${}_\gamma T_i$ is rigid, and it follows that $T \oplus {}_\gamma T_i$ is rigid; a contradiction to the assumption that $T$ is maximal rigid.
\end{proof}

\begin{thm}\label{T:short list}
	Let $A$ be a mesh algebra from the following list:
\begin{enumerate}
			\item{$P(\bB_k)$ for $k \geq 2$},
			\item{$P(\bC_n)$ for $n \geq 3$},
			\item{$P(\bF_4)$}.
		\end{enumerate}
Then every basic maximal rigid module $T$ in $\modu A$ is $\gamma$-equivariant.
\end{thm}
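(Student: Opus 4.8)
The plan is to reduce Theorem \ref{T:short list} to Lemma \ref{L:maximal rigids are invariant}, which already does all the module-theoretic work: it shows that if $\gamma^2 = \mathrm{id}$ (as an automorphism of $\La$, hence in particular modulo inner automorphisms, since twisting by an inner automorphism gives an isomorphic module), then every basic maximal rigid module is $\gamma$-equivariant. So the entire content of the proof is to verify that $\gamma^2 = \mathrm{id}$, or at least $\gamma^2$ is inner, for each of the three algebras $P(\bB_k)$, $P(\bC_n)$, $P(\bF_4)$.

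First I would invoke Lemma \ref{L:order 2}, which identifies $\gamma$ with $\wt{\sigma}$ up to inner automorphism, where $\sigma$ is the graph automorphism of the relevant simply laced Dynkin diagram $\Delta$ used to present $A = \cC/\langle \sigma\tau\rangle$ (namely $\bA_{2k-1}$ for $P(\bB_k)$, $\bD_{n+1}$ for $P(\bC_n)$, $\bE_6$ for $P(\bF_4)$, as in Section \ref{S:list}). In each of these three cases $\sigma$ is the graph automorphism of \emph{order two}, hence $\sigma^2 = \mathrm{id}$ on $\cC$, so the induced automorphism $\wt{\sigma}$ on $\La$ satisfies $\wt{\sigma}^2 = \mathrm{id}$. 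Combining with Lemma \ref{L:order 2} we get that $\gamma^2$ agrees with $\wt{\sigma}^2 = \mathrm{id}$ up to inner automorphism, and twisting a module by an inner automorphism yields an isomorphic module; so $\gamma$ behaves like an involution for the purpose of the equivariance argument. Concretely, for $X$ a module, ${}_{\gamma^2}X \cong {}_{\wt\sigma^2}X = X$, which is exactly what is needed to rerun the two $\Ext^1$ vanishing computations in the proof of Lemma \ref{L:maximal rigids are invariant}: one uses ${}_\gamma T_i$ rigid, $\Ext^1_\La(T,{}_\gamma T_i)\cong D\Ext^1_\La(T_i,T)=0$ directly from Lemma \ref{L:Ext twist}, and $\Ext^1_\La({}_\gamma T_i, T)\cong \Ext^1_\La(T_i,{}_\gamma T)\cong D\Ext^1_\La(T,T_i)=0$ using $\gamma^2\sim\mathrm{id}$.

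Then I would conclude: for each $A$ in the list, $T \oplus {}_\gamma T_i$ would be rigid strictly larger than the basic maximal rigid $T$ if $T$ were not $\gamma$-equivariant, a contradiction; hence every basic maximal rigid $T$ is $\gamma$-equivariant. The only subtlety I anticipate — and the step I'd state carefully — is the passage from "$\gamma = \wt\sigma$ up to inner automorphism and $\wt\sigma^2 = \mathrm{id}$" to "the hypothesis $\gamma^2 = \mathrm{id}$ of Lemma \ref{L:maximal rigids are invariant} is satisfiable"; strictly speaking $\gamma^2$ need only be inner rather than literally the identity, so I would either note that Lemma \ref{L:maximal rigids are invariant} goes through verbatim with "$\gamma^2$ inner" in place of "$\gamma^2 = \mathrm{id}$" (since ${}_{\gamma^2}X\cong X$ is all that is used), or observe that one may choose the representative $\gamma = \wt\sigma$ within its class of automorphisms modulo inner ones, for which $\gamma^2 = \mathrm{id}$ holds on the nose. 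Either way the obstacle is purely bookkeeping about inner automorphisms, not a genuine mathematical difficulty; the essential input ($\Ext^1$-symmetry plus $\sigma$ of order two) is already in place.

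\begin{proof}
By Lemma \ref{L:order 2}, in each of the three cases the automorphism $\gamma$ of $A$ agrees, up to an inner automorphism, with $\wt{\sigma}$, where $\sigma$ is the graph automorphism of the simply laced Dynkin diagram $\Delta$ chosen to present $A = \cC/\langle \sigma\tau\rangle$ as in Section \ref{S:list}: explicitly $\Delta$ is of type $\bA_{2k-1}$ for $A = P(\bB_k)$, of type $\bD_{n+1}$ for $A = P(\bC_n)$, and of type $\bE_6$ for $A = P(\bF_4)$, and in each of these cases $\sigma$ is the graph automorphism of order two. Hence $\sigma^2 = \mathrm{id}$ on $\cC$, so the induced automorphism $\wt{\sigma}$ of $A$ satisfies $\wt{\sigma}^2 = \mathrm{id}$, and therefore $\gamma^2$ is an inner automorphism of $A$. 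Since twisting a module by an inner automorphism produces an isomorphic module, we have ${}_{\gamma^2}X \cong X$ for every $X$ in $\modu A$. The proof of Lemma \ref{L:maximal rigids are invariant} uses the hypothesis $\gamma^2 = \mathrm{id}$ only through the isomorphism ${}_{\gamma^2}X \cong X$ (to obtain $\Ext^1_A({}_\gamma T_i, T) \cong \Ext^1_A(T_i, {}_\gamma T) \cong D\Ext^1_A(T,T_i) = 0$), so that argument applies verbatim here. Thus every basic maximal rigid module $T$ in $\modu A$ is $\gamma$-equivariant.
\end{proof}
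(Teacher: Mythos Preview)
Your proof is correct and follows essentially the same route as the paper's: invoke Lemma \ref{L:order 2} to see that $\gamma$ has order $2$ (up to inner automorphism), then apply Lemma \ref{L:maximal rigids are invariant}. If anything, you are slightly more careful than the paper in handling the ``up to inner automorphism'' caveat, which the paper's two-line proof simply glosses over.
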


\begin{proof}
	By Lemma \ref{L:order 2} the automorphism $\gamma$ has order $2$. The claim follows from Lemma \ref{L:maximal rigids are invariant}.
\end{proof}

We can thus choose our algebra $\La$ with the desired conditions listed at the start of Section \ref{S:Mutation}, to be one of the mesh algebras from Theorem \ref{T:short list}. Note that the only mesh algebra of non-simply laced Dynkin type that is not included in the list, is $P(\bG_2)$. There, the automorphism $\gamma$ has order $3$, and we cannot apply Lemma \ref{L:maximal rigids are invariant}.

\subsection{Minimal $\gamma$-equivariant modules}

Mutation of maximal rigid modules of preprojective algebras as studied in \cite{GLS-3} replaces an indecomposable summand of the maximal rigid module:  \cite[Proposition~6.7]{GLS-3} states that if $T \oplus X$ is a basic maximal rigid module of a preprojective algebra $P(\Delta)$ of Dynkin type, with $X$ indecomposable, then there exists a unique indecomposable module $Y \ncong X$ of $P(\Delta)$ such that $T \oplus Y$ is again a basic maximal rigid module. The following observation makes clear why we cannot expect to be able to mutate at a single indecomposable summand in our algebra $\La$, unless this summand is $\gamma$-equivariant.

\begin{lemma}\label{L:justification}
	Assume $T \oplus X$ is a basic maximal rigid module in $\modu \La$, such that $X$ is indecomposable and {\em not} $\gamma$-equivariant. Then there cannot exist a module $Y \ncong X$ such that $T \oplus Y$ is also basic maximal rigid.
\end{lemma}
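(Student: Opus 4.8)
The plan is to argue by contradiction, exploiting the symmetry of $\Ext^1$ afforded by Lemma~\ref{L:Ext twist} together with the fact that $\gamma$ permutes rigid modules. Suppose such a $Y \ncong X$ exists, so that both $T \oplus X$ and $T \oplus Y$ are basic maximal rigid. First I would apply $\gamma_*$ (i.e.\ the twist ${}_\gamma(-)$) to $T \oplus X$: since $T$ is $\gamma$-equivariant by our standing assumption on $\La$ (any basic maximal rigid module is $\gamma$-equivariant), and since $X$ is \emph{not} $\gamma$-equivariant, the module ${}_\gamma T \oplus {}_\gamma X \cong T \oplus {}_\gamma X$ is again basic maximal rigid, with ${}_\gamma X \ncong X$ indecomposable. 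So $T \oplus {}_\gamma X$ is a basic maximal rigid module obtained from $T \oplus X$ by replacing the single indecomposable summand $X$ by ${}_\gamma X \ncong X$, i.e.\ ${}_\gamma X$ is \emph{a} ``mutation'' of $T \oplus X$ at $X$.

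The key step is then to show that the complement $Y$ of $T$ in a basic maximal rigid module $T \oplus Y$ is unique, i.e.\ that there is at most one indecomposable $Y \ncong X$ with $T \oplus Y$ basic maximal rigid; combined with the previous paragraph this forces $Y \cong {}_\gamma X$. But ${}_\gamma X$ is itself not $\gamma$-equivariant (since $X$ is not), so we may now run the same argument with the pair $(T \oplus X, T \oplus {}_\gamma X)$ in place of $(T\oplus X, T \oplus Y)$, or more directly: applying $\gamma_*$ to $T \oplus {}_\gamma X$ would have to give yet another basic maximal rigid module $T \oplus {}_{\gamma^2} X$, and iterating produces an infinite sequence of pairwise non-isomorphic complements $X, {}_\gamma X, {}_{\gamma^2}X, \dots$ of $T$ (note $\gamma$ need not have order $2$ here — this lemma is stated for general $\La$, including $P(\bG_2)$). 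This contradicts the uniqueness of the complement. An alternative, cleaner route that avoids the order of $\gamma$ altogether: from uniqueness, any basic maximal rigid module containing $T$ is either $T \oplus X$ or $T \oplus Y$; but $T \oplus {}_\gamma X$ is basic maximal rigid and contains $T$, and ${}_\gamma X \ncong X$, so ${}_\gamma X \cong Y$; likewise, since ${}_\gamma Y$ (where $Y \cong {}_\gamma X$) must also be a complement of $T$, and ${}_\gamma Y \cong {}_{\gamma^2} X \ncong {}_\gamma X \cong Y$ forces ${}_{\gamma^2}X \cong X$, so $\gamma^2$ fixes $X$ up to isomorphism — but then $\langle \gamma \rangle$ acts on $\{X, {}_\gamma X\}$, which is fine, so one really does need to close the loop via uniqueness producing exactly the two-element orbit and then deriving the contradiction from the structure, see below.

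Concretely, here is the argument I would write. By hypothesis $T \oplus X$ is basic maximal rigid with $X$ indecomposable. Since $T$ is $\gamma$-equivariant and $X$ is not, the module ${}_\gamma(T \oplus X) \cong T \oplus {}_\gamma X$ is basic maximal rigid with $X \ncong {}_\gamma X$ indecomposable. Now I invoke the general fact (the analogue of \cite[Proposition~6.7]{GLS-3} in the ambient setting, or directly: two basic maximal rigid modules sharing all but one indecomposable summand have their differing summands related in a unique way) that, given a basic maximal rigid module $T \oplus X$ with $X$ indecomposable, there is \emph{at most one} isomorphism class of indecomposable $Z \ncong X$ with $T \oplus Z$ basic maximal rigid. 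Granting this, the existence of $Y \ncong X$ with $T \oplus Y$ basic maximal rigid, together with $T \oplus {}_\gamma X$ being basic maximal rigid with ${}_\gamma X \ncong X$, forces $Y \cong {}_\gamma X$. But then applying ${}_\gamma(-)$ to $T \oplus Y \cong T \oplus {}_\gamma X$ gives a third basic maximal rigid module $T \oplus {}_{\gamma^2}X$, and since ${}_{\gamma^2}X \ncong {}_\gamma X$ (as $\gamma$ acts freely on the non-equivariant indecomposable $X$ in the sense that consecutive twists are non-isomorphic — this needs the observation that if ${}_{\gamma^2}X \cong {}_\gamma X$ then $X \cong {}_{\gamma^{-1}}X \cong {}_\gamma X$ since twisting is invertible, contradicting non-equivariance), uniqueness forces ${}_{\gamma^2}X \cong X$, and then $T \oplus X$, $T \oplus {}_\gamma X$ are \emph{both} complements, contradicting nothing yet — so the genuinely clean contradiction is: there cannot be a complement $Y \ncong X$, because such a $Y$ would have to equal the unique mutation ${}_\gamma X$, but ${}_\gamma X$ is not $\gamma$-equivariant either, so by the same reasoning the unique complement of $T$ other than ${}_\gamma X$ is ${}_{\gamma^2}X$; matching $\{X, {}_\gamma X\} = \{{}_\gamma X, {}_{\gamma^2}X\}$ gives $X \cong {}_{\gamma^2}X$, so $\langle\gamma\rangle$ orbits have size dividing $2$, which for $P(\bG_2)$ (order $3$) is absurd, and for the order-$2$ algebras simply says $Y\cong{}_\gamma X$ — in which case the honest statement is that one \emph{can} mutate, at ${}_\gamma X \oplus X$ jointly, not at $X$ alone, and the lemma as stated (``no $Y \ncong X$ with $T \oplus Y$ basic maximal rigid'') must be read with $T \oplus X$ \emph{basic}, meaning ${}_\gamma X$ is \emph{already} a summand of nothing forced — so in fact the right short proof is: if $Y$ existed, $T\oplus Y$ basic maximal rigid and $\gamma$-stable would force $Y$ $\gamma$-equivariant; but $Y$ is the unique mutation ${}_\gamma X$ of the non-equivariant $X$, hence not $\gamma$-equivariant, contradiction.

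\medskip

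\noindent\textbf{Proof (proposed).} Suppose, for contradiction, that there is a module $Y \ncong X$ with $T \oplus Y$ basic maximal rigid. Since $X$ is indecomposable and not $\gamma$-equivariant, ${}_\gamma X$ is indecomposable with ${}_\gamma X \ncong X$. As $T$ is $\gamma$-equivariant (by the standing hypothesis that every basic maximal rigid module over $\La$ is $\gamma$-equivariant), the twist $T \oplus {}_\gamma X \cong {}_\gamma(T \oplus X)$ is again basic maximal rigid. By uniqueness of the complement of $T$ among basic maximal rigid modules (the analogue of \cite[Proposition~6.7]{GLS-3}), the indecomposable modules $Z \ncong X$ with $T \oplus Z$ basic maximal rigid are unique up to isomorphism; applying this to $Y$ and to ${}_\gamma X$ yields $Y \cong {}_\gamma X$. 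But every basic maximal rigid module over $\La$ is $\gamma$-equivariant, so $Y$ is $\gamma$-equivariant, i.e.\ ${}_\gamma Y \cong Y$; substituting $Y \cong {}_\gamma X$ gives ${}_{\gamma^2} X \cong {}_\gamma X$, hence $X \cong {}_\gamma X$ after applying the invertible functor ${}_{\gamma^{-1}}(-)$. This contradicts the assumption that $X$ is not $\gamma$-equivariant, and the lemma follows. \qedhere

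\medskip

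The main obstacle I anticipate is pinning down exactly which uniqueness-of-complement statement is available at this point in the paper and may legitimately be cited: \cite[Proposition~6.7]{GLS-3} is about $P(\Delta)$, not about $\La$, so either it must be lifted to $\La$ via the Galois covering $\cC \to \cC/\langle\sigma\tau\rangle$ and the push-down/pull-up correspondence of Theorem~\ref{C:move} (transporting the mutation statement from $P(\Delta)$ down to $\La$), or the relevant uniqueness must be proved directly from the $\Ext^1$-symmetry of Corollary~\ref{C:Ext-symmetry} and a Bongartz-type completion argument. The rest of the proof is formal: it is just the interplay between the invertible twist functor ${}_\gamma(-)$, its preservation of rigidity and of the summand $T$, and the failure of $X$ to be fixed. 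If the uniqueness statement is not yet in hand, the cleanest fallback is to note that, since $T$ is $\gamma$-equivariant, ${}_\gamma(-)$ sends the (at most two-element, by the anticipated uniqueness) set of basic maximal rigid completions of $T$ to itself, and a free orbit calculation under $\langle\gamma\rangle$ — impossible on a set of size $\le 2$ for $P(\bG_2)$ and forcing $\gamma$-equivariance of $X$ otherwise — delivers the contradiction.
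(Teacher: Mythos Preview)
Your proof has a genuine gap at its very first step: you assert that $T$ is $\gamma$-equivariant ``by the standing hypothesis that every basic maximal rigid module over $\La$ is $\gamma$-equivariant'', but $T$ is \emph{not} maximal rigid --- only $T \oplus X$ is. In fact $T$ cannot be $\gamma$-equivariant here. Since $T \oplus X$ is $\gamma$-equivariant and basic, and $X$ is indecomposable with ${}_\gamma X \ncong X$, the module ${}_\gamma X$ must already occur as a summand of $T$. Hence ${}_\gamma T$ has $X$ as a summand while $T$ does not, so ${}_\gamma T \ncong T$. Consequently your claimed isomorphism $T \oplus {}_\gamma X \cong {}_\gamma(T \oplus X)$ is false (indeed ${}_\gamma(T \oplus X) \cong T \oplus X$, and $T \oplus {}_\gamma X$ is not even basic). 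The whole ``$T \oplus {}_\gamma X$ is a second complement, so by uniqueness $Y \cong {}_\gamma X$'' line collapses.

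There is also a second, independent problem: you invoke a uniqueness-of-complement statement (the analogue of \cite[Proposition~6.7]{GLS-3}) that is not available for $\La$ at this point in the paper. The lemma you are trying to prove is precisely the observation that the naive indecomposable-mutation picture fails, so you cannot cite such a result.

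The paper's argument is short and avoids both issues. It uses exactly the fact you missed, namely that ${}_\gamma X$ is a summand of $T$: from maximal rigidity of $T \oplus X$ there is a non-trivial extension between $X$ and $Y$, hence between ${}_\gamma X$ and ${}_\gamma Y$; by $\gamma$-equivariance of $T \oplus Y$, the module ${}_\gamma Y$ is a summand of $T \oplus Y$, and since ${}_\gamma X$ is a summand of $T \subset T \oplus Y$, this contradicts rigidity of $T \oplus Y$. No uniqueness statement is needed.
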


\begin{proof}
Assume as a contradiction there exists a module $Y \ncong X$, such that $T \oplus Y$ is basic maximal rigid. Then we have non-trivial extensions between $X$ and $Y$
by maximal rigidity of $T \oplus X$, and thus also $\Ext_\Lambda^1({}_\gamma Y, {}_\gamma X) \neq 0$ or $\Ext^1_\La({}_\gamma X,{}_\gamma Y) \neq 0$. By our assumptions on $\La$, any basic maximal rigid module is $\gamma$-equivariant. So, since $T \oplus X$ is basic, and $X$ is indecomposable and not $\gamma$-equivariant, we must have that ${}_\gamma X$ is a summand of $T$. However, by $\gamma$-equivariance of $T \oplus Y$, the module ${}_\gamma Y$ must also be a summand of $T \oplus Y$, contradicting rigidity of $T \oplus Y$. 
\end{proof}

Instead, in our algebra $\Lambda$ we want to mutate with respect to {\em minimal $\gamma$-equivariant summands}. We introduce this concept more generally. Let $A$ be a finite dimensional self-injective algebra.

\begin{notation}
	Let $X$ be a module in $\modu A$. Then we write
	\[
		|X| = \; \text{number of indecomposable summands of $X$ up to isomorphism}.
	\]
\end{notation}

\begin{definition}\label{D:minimal equivariant}
	Let $X$ be a module in $\modu A$ and let $\varphi$ be an automorphism of $A$. We say that $X$ is {\em minimal $\varphi$-equivariant}, if it is basic and the indecomposable summands of $X$ form a $\varphi$-orbit, i.e.\ setting $n = |X|-1$ we have 
		\[
			X \cong \bigoplus_{i=0}^{n} {}_{\varphi^i} \tilde{X},
		\]
	where $\tilde{X}$ is an indecomposable $A$-module with ${}_{\varphi^{n+1}}\tilde{X} \cong \tilde{X}$.
\end{definition}

\begin{remark}\normalfont
	In the following, we will consider minimal $\gamma$-equivariant modules of $\La$. Note that if $\La$ is a mesh algebra as in Theorem \ref{T:short list}, then since $\gamma^2 = \mathrm{id}$ by Lemma \ref{L:order 2}, any minimal $\gamma$-equivariant module in $\modu \Lambda$ has one or two indecomposable summands.
\end{remark}

\subsection{Mutation of maximal rigid modules}\label{S:mutation maximal rigids}

Our strategy to construct mutations is based upon modifying \cite[Section~5]{GLS-3} by using our results on equivariance. For a brief reminder on approximations, we refer the reader to the preliminaries in \cite[Section~3.1]{GLS-3}.

\begin{lemma} \label{L:mutation is rigid}
	Let $T$ and $X$ be rigid $\Lambda$-modules with ${}_\gamma T \cong T$. 
	If
	\[
		\xymatrix{0 \ar[r] & X \ar[r]^-f & T' \ar[r]^-g & Y \ar[r] & 0}
	\]
	is a short exact sequence with $f$ a left $\add(T)$-approximation, then $T \oplus Y$ is rigid.
\end{lemma}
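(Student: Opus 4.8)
The plan is to show $\Ext^1_\La(T\oplus Y, T\oplus Y)=0$ by checking the four components separately. We already know $\Ext^1_\La(T,T)=0$ since $T$ is rigid, so it remains to verify $\Ext^1_\La(Y,Y)=0$, $\Ext^1_\La(T,Y)=0$ and $\Ext^1_\La(Y,T)=0$. The natural tool is the long exact sequence in $\Ext^*_\La(-,Z)$ and $\Ext^*_\La(Z,-)$ applied to the given short exact sequence $0\to X\to T'\to Y\to 0$, together with the fact that $f$ is a left $\add(T)$-approximation (which controls the connecting map into $\Hom$-spaces involving $T$), and Corollary~\ref{C:Ext-symmetry} / Proposition~\ref{P:Ext-symmetry}, which lets us trade an $\Ext^1$ into $T$ for an $\Ext^1$ out of $T$ (here using ${}_\gamma T\cong T$). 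Since $T'\in\add(T)$, rigidity of $T$ also gives $\Ext^1_\La(T,T')=0=\Ext^1_\La(T',T)$.

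First I would handle $\Ext^1_\La(T,Y)=0$: apply $\Hom_\La(T,-)$ to the short exact sequence to get the exact piece $\Ext^1_\La(T,T')\to\Ext^1_\La(T,Y)\to\Ext^2_\La(T,X)$ — this is not immediately zero, so instead I would use the other variance. Apply $\Hom_\La(-,Y)$... actually the cleaner route for $\Ext^1_\La(T,Y)$ is: from $0\to X\to T'\to Y\to 0$ and $\Hom_\La(T,-)$ we get $\Hom_\La(T,T')\xrightarrow{g_*}\Hom_\La(T,Y)\to\Ext^1_\La(T,X)$, and the approximation property of $f$ does not directly help here. The standard trick (as in \cite[Section~5]{GLS-3}) is: $\Ext^1_\La(T,Y)$ embeds into $\Ext^2_\La(T,X)$ only if $\Ext^1_\La(T,T')=0$, which holds; but we still need $\Ext^2$ to vanish, which it need not in a non-hereditary setting. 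So the genuinely available input is the $\Ext^1$-symmetry: $D\Ext^1_\La(T,Y)\cong\Ext^1_\La(Y,{}_\gamma T)\cong\Ext^1_\La(Y,T)$, so it suffices to prove $\Ext^1_\La(Y,T)=0$. Apply $\Hom_\La(-,T)$ to the short exact sequence: $\Hom_\La(T',T)\xrightarrow{f^*}\Hom_\La(X,T)\to\Ext^1_\La(Y,T)\to\Ext^1_\La(T',T)=0$. Since $f$ is a left $\add(T)$-approximation, $f^*\colon\Hom_\La(T',T)\to\Hom_\La(X,T)$ is surjective by definition of approximation. Hence $\Ext^1_\La(Y,T)=0$, and therefore $\Ext^1_\La(T,Y)=0$ as well.

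Next I would do $\Ext^1_\La(Y,Y)=0$. Apply $\Hom_\La(-,Y)$ to $0\to X\to T'\to Y\to 0$: the relevant exact piece is $\Ext^1_\La(T',Y)\to\Ext^1_\La(X,Y)\to\Ext^2_\La(Y,Y)$, which again has an unwanted $\Ext^2$. Instead apply $\Hom_\La(Y,-)$: $\Hom_\La(Y,T')\xrightarrow{g_*}\Hom_\La(Y,Y)\to\Ext^1_\La(Y,X)\to\Ext^1_\La(Y,T')$. Now $\Ext^1_\La(Y,T')=0$ because $T'\in\add(T)$ and $\Ext^1_\La(Y,T)=0$ was just proved; so $\Ext^1_\La(Y,Y)$ is a quotient of $\Ext^1_\La(Y,X)$... hmm, that's the wrong direction. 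Let me instead use $\Hom_\La(-,Y)$ more carefully with the symmetry: $D\Ext^1_\La(Y,Y)$ — but $Y$ need not be $\gamma$-equivariant, so the symmetry is not directly available with $Y$ in the first slot. The correct GLS-style argument is: from $\Hom_\La(Y,-)$ applied to the sequence, $\Ext^1_\La(Y,X)\to\Ext^1_\La(Y,Y)\to\Ext^2_\La(Y,X)$; combine with $\Hom_\La(-,X)$ giving a handle on $\Ext^1_\La(Y,X)$ via $\Ext^1_\La(X,X)=0$ and the connecting map $\Hom_\La(X,X)\to\Ext^1_\La(Y,X)$ being surjective because $f$ is a left $\add(T)$-approximation and $X$ is rigid — one shows $\operatorname{id}_X$ does \emph{not} factor through $f$ unless... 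This is exactly where I expect the main obstacle: in the classical $2$-Calabi–Yau setting one uses the full $\Ext^1$-symmetry for all modules, whereas here the symmetry only applies when the \emph{second} argument is $\gamma$-equivariant (equivalently, via ${}_\gamma T\cong T$, when $T$ is involved). So the vanishing of $\Ext^1_\La(Y,Y)$ must be deduced indirectly: apply $\Hom_\La(-,T)$ and $\Hom_\La(-,Y)$ and chase, using that $f$ is an approximation to kill the relevant connecting maps, plus $\Ext^1_\La(X,X)=0$, $\Ext^1_\La(T,T)=0$, $\Ext^1_\La(X,T)=0$ (from rigidity of $T\oplus X$ if we assume it — but here we only assume $T$ and $X$ rigid, \emph{not} $T\oplus X$ rigid; I would check whether the lemma as stated needs $\Ext^1_\La(X,T)=0$ and, if so, note it follows because $f$ is a left $\add(T)$-approximation together with $\Ext^1_\La(T',T)=0$ gives $\Ext^1_\La(X,T)$ as a cokernel which we can identify). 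Concretely: $\Hom_\La(-,T)$ on the sequence gives $\Hom_\La(X,T)\to\Ext^1_\La(Y,T)\to\Ext^1_\La(T',T)=0$ and $\Ext^1_\La(X,T)\cong\operatorname{coker}(\Hom_\La(T',T)\to\Hom_\La(X,T))/(\cdots)$; the approximation property makes $\Hom_\La(T',T)\to\Hom_\La(X,T)$ surjective, forcing $\Ext^1_\La(X,T)$ to inject into $\Ext^1_\La(T',T)=0$, hence $\Ext^1_\La(X,T)=0$. Then $\Hom_\La(Y,-)$ applied to the sequence, using $\Ext^1_\La(Y,T')=0$ (now established since $T'\in\add T$ and $\Ext^1_\La(Y,T)=0$), gives $\Ext^1_\La(Y,Y)$ as a quotient of... no — it gives $\Ext^1_\La(Y,X)\twoheadrightarrow\ker(\Ext^1_\La(Y,Y)\hookrightarrow\Ext^2_\La(Y,X))$ is still stuck on $\Ext^2$. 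The resolution, following GLS precisely, is to \emph{instead} apply $\Hom_\La(X,-)$ to the sequence and use that $g\colon T'\to Y$ induces $\Hom_\La(X,T')\to\Hom_\La(X,Y)$ whose cokernel lands in $\Ext^1_\La(X,X)=0$, so $\Hom_\La(X,T')\twoheadrightarrow\Hom_\La(X,Y)$; dualizing via the $\Ext^1$-symmetry with $T$ in the second slot throughout and assembling, one gets $\Ext^1_\La(Y,Y)=0$. I would write this final assembly as the last paragraph of the proof, flagging that the only structural input beyond rigidity of $T$ and $X$ is the approximation property of $f$ and Corollary~\ref{C:Ext-symmetry}, and that ${}_\gamma T\cong T$ is used precisely to invoke that symmetry. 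The main obstacle, to restate, is that the $\Ext^1$-symmetry is one-sided (only valid against $\gamma$-equivariant modules), so each vanishing statement must be reduced to one of the form $\Ext^1_\La(-,T)=0$ or $\Ext^1_\La(T,-)=0$ before the symmetry can be applied; the bookkeeping to do this for $\Ext^1_\La(Y,Y)$ is the delicate part.
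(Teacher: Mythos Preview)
Your approach matches the paper's (which simply cites \cite[Lemma~5.1]{GLS-3} together with Proposition~\ref{P:Ext-symmetry}), and your arguments for $\Ext^1_\La(Y,T)=0$ and $\Ext^1_\La(T,Y)=0$ are correct: apply $\Hom_\La(-,T)$, use that $f^*$ is surjective by the approximation property and that $\Ext^1_\La(T',T)=0$, then invoke ${}_\gamma T\cong T$ and Proposition~\ref{P:Ext-symmetry} for the other direction. (Small point: Corollary~\ref{C:Ext-symmetry} assumes $T$ is maximal rigid, which is not a hypothesis here; the paper correctly invokes Proposition~\ref{P:Ext-symmetry} instead.)

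The gap is in $\Ext^1_\La(Y,Y)=0$. Your proposed finish, ``dualizing via the $\Ext^1$-symmetry with $T$ in the second slot throughout'', cannot work: neither argument in $\Ext^1_\La(Y,Y)$ is $\gamma$-equivariant, so the symmetry has no purchase there. The actual assembly is a factorization argument that uses no further symmetry. From $\Hom_\La(-,Y)$ applied to the sequence, together with $\Ext^1_\La(T',Y)=0$ (which you already have via $\Ext^1_\La(T,Y)=0$), one gets
\[
\Ext^1_\La(Y,Y)\cong\mathrm{coker}\big(f^*\colon\Hom_\La(T',Y)\to\Hom_\La(X,Y)\big).
\]
Now take any $h\colon X\to Y$. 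Since $\Ext^1_\La(X,X)=0$, the map $g_*\colon\Hom_\La(X,T')\to\Hom_\La(X,Y)$ is surjective (you noted this), so $h=gh'$ for some $h'\colon X\to T'$. Since $T'\in\add(T)$ and $f$ is a left $\add(T)$-approximation, $h'=\psi f$ for some $\psi\colon T'\to T'$; hence $h=(g\psi)f$, so $f^*$ is surjective and $\Ext^1_\La(Y,Y)=0$. You had both ingredients in hand but did not combine them this way.

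A minor correction: your side claim that ``$\Ext^1_\La(X,T)$ injects into $\Ext^1_\La(T',T)=0$'' is backwards (the long exact sequence gives a map $\Ext^1_\La(T',T)\to\Ext^1_\La(X,T)$, not the other way), and in any case this vanishing is neither assumed nor needed for the lemma.
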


\begin{proof}
	This follows analogously to the proof of \cite[Lemma~5.1]{GLS-3} and exploiting the $\Ext^1$-symmetry from Proposition \ref{P:Ext-symmetry}.
\end{proof}

Recall that throughout this section, we have assumed that any basic maximal rigid module is $\gamma$-equivariant.
	
	\begin{corollary}\label{C:ses}
	Let $T$ and $X$ be rigid $\Lambda$-modules. If $T$ is basic maximal rigid then there exists a short exact sequence
	\[
		0 \to X \to T' \to T'' \to 0
	\]
	with $T'$ and $T''$ in $\add(T)$.
\end{corollary}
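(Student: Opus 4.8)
The plan is to deduce Corollary~\ref{C:ses} from Lemma~\ref{L:mutation is rigid} by a standard maximal-rigidity argument. First I would take a left $\add(T)$-approximation of $X$: since $T$ is basic maximal rigid, $\add(T)$ is functorially finite (indeed, it is generated by finitely many indecomposables), so such an approximation $f\colon X\to T'$ exists. Because $\La$ is self-injective and $T$, being maximal rigid, is a generator-cogenerator (in fact it contains all projective-injectives as summands, since those are rigid and must lie in $\add(T)$), the map $f$ is injective: any nonzero element of the socle of $X$ maps nontrivially to an injective summand of $T$, so $\ker f$ has zero socle and hence vanishes. This gives a short exact sequence
\[
	0 \to X \xrightarrow{f} T' \xrightarrow{g} Y \to 0
\]
with $f$ a left $\add(T)$-approximation.

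Next, by Lemma~\ref{L:mutation is rigid} (whose hypothesis ${}_\gamma T\cong T$ holds because $T$ is basic maximal rigid and we are assuming throughout this section that all such modules are $\gamma$-equivariant), the module $T\oplus Y$ is rigid. Then maximal rigidity of $T$ forces $Y\in\add(T)$, so $Y$ is itself of the form $T''$ with $T''\in\add(T)$. Substituting back yields the desired short exact sequence $0\to X\to T'\to T''\to 0$ with both $T'$ and $T''$ in $\add(T)$.

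The main obstacle — really the only nonroutine point — is the injectivity of the approximation $f$. This is exactly where self-injectivity of $\La$ and the fact that $T$ contains every indecomposable projective-injective as a summand are used: for a self-injective algebra, a rigid module is automatically maximal rigid only if it contains all the projective-injectives (they are rigid summands that must be absorbed into $\add T$), and the injective envelope of $X$ lies in $\add T$, which makes the $\add(T)$-approximation monic. One should spell out why $X\to T'$ being an $\add(T)$-approximation together with $\add(T)\supseteq\add(D\La)$ implies $X\hookrightarrow T'$; everything else is the formal maximal-rigidity step plus an invocation of the two preceding results. If instead one prefers to bypass the socle argument, one can simply take $T'$ to be an injective envelope of $X$ composed with the approximation property, but the cleanest route is the socle computation above.
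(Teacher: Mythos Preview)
Your proposal is correct and follows essentially the same line as the paper's proof, which simply refers to \cite[Corollary~5.2]{GLS-3} and invokes Lemma~\ref{L:mutation is rigid} together with Corollary~\ref{C:Ext-symmetry}. You have merely unpacked the GLS argument explicitly: take a left $\add(T)$-approximation, use that $\La\in\add(T)$ (via maximal rigidity and self-injectivity) to see it is monic, apply Lemma~\ref{L:mutation is rigid} to get $T\oplus Y$ rigid, and conclude $Y\in\add(T)$ by maximal rigidity.
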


\begin{proof}
This follows analogously to the proof of \cite[Corollary~5.2]{GLS-3}, where we  use Lemma \ref{L:mutation is rigid}  and Corollary \ref{C:Ext-symmetry}.
\end{proof}

\begin{corollary}\label{C:projdim}
	Let $T$ and $X$ be rigid $\Lambda$-modules and set $E = \mathrm{End}_\Lambda(T)$. If $T$ is basic maximal rigid, then
	\[
		\mathrm{proj.dim_E}(\Hom_\Lambda(X,T)) \leq 1.
	\]
\end{corollary}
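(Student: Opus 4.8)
The plan is to mimic the standard homological argument that appears in \cite[Section~5]{GLS-3}, adapted to our twisted setting by systematically replacing uses of the $2$-Calabi--Yau symmetry with the symmetry supplied by Corollary \ref{C:Ext-symmetry}. First I would apply Corollary \ref{C:ses} to obtain a short exact sequence
\[
	0 \to X \to T' \to T'' \to 0
\]
with $T', T'' \in \add(T)$. Applying the functor $\Hom_\Lambda(-,T)$ to this sequence and using that $T$ is rigid (so that $\Ext^1_\Lambda(T'',T) = 0$, as $T'' \in \add(T)$), one gets a short exact sequence of right $E$-modules
\[
	0 \to \Hom_\Lambda(T'',T) \to \Hom_\Lambda(T',T) \to \Hom_\Lambda(X,T) \to 0.
\]
Here $\Hom_\Lambda(T',T)$ and $\Hom_\Lambda(T'',T)$ are projective $E$-modules, because $T', T'' \in \add(T)$ and $E = \End_\Lambda(T)$; indeed for $W \in \add(T)$ the module $\Hom_\Lambda(W,T)$ is a projective right $E$-module. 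This exhibits a projective resolution of $\Hom_\Lambda(X,T)$ of length $1$, whence $\mathrm{proj.dim}_E(\Hom_\Lambda(X,T)) \leq 1$.

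The one point that needs care — and which I expect to be the only real obstacle — is the exactness on the right of the $\Hom$-sequence, i.e.\ verifying that $\Ext^1_\Lambda(T'',T) = 0$. Since $T'' \in \add(T)$ and $T$ is rigid this is immediate; the subtlety is rather that one must check the sequence obtained from Corollary \ref{C:ses} can be chosen so that the relevant $\Ext^1$ vanishes, but this is automatic from $T'' \in \add(T)$, so no twist-related complication arises at this step. The twist $\gamma$ has already done its work inside Corollary \ref{C:ses} (via Corollary \ref{C:Ext-symmetry} and Lemma \ref{L:mutation is rigid}), so the present corollary is then a formal consequence. I would therefore keep the proof short, citing Corollary \ref{C:ses} for the existence of the short exact sequence and then running the standard Horseshoe-type argument; the proof parallels that of \cite[Corollary~5.3]{GLS-3} almost verbatim, with Corollary \ref{C:ses} and Corollary \ref{C:Ext-symmetry} substituted for their $2$-Calabi--Yau counterparts.
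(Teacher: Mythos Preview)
Your proposal is correct and matches the paper's own proof essentially verbatim: apply $\Hom_\Lambda(-,T)$ to the short exact sequence from Corollary~\ref{C:ses}, use rigidity of $T$ to get exactness on the right, and observe that $\Hom_\Lambda(T',T)$ and $\Hom_\Lambda(T'',T)$ are projective $E$-modules. The paper likewise notes that this parallels \cite[Corollary~5.3]{GLS-3}.
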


\begin{proof}
This follows analogously to the proof of \cite[Corollary~5.3]{GLS-3}: Applying $\Hom_\Lambda(-,T)$ to the short exact sequence from Corollary \ref{C:ses} yields the projective resolution
\[
	0 \to \Hom_\Lambda(T'',T) \to \Hom_\Lambda(T',T) \to \Hom_\Lambda(X,T) \to 0.
\]
\end{proof}

\begin{thm}\label{T:tilting module}
	Let $T_1$ and $T_2$ be basic maximal rigid $\Lambda$-modules. For $i = 1,2$ set $E_i = \mathrm{End}_\Lambda(T_i)$. Then $T = \Hom_\Lambda(T_2,T_1)$ is tilting over $E_1$ and $\mathrm{End}_{E_1}(T) \cong E_2^{op}$. In particular, the endomorphism algebras $\End_\La(T_1)$ and $\End_\La(T_2)$ are derived equivalent.
\end{thm}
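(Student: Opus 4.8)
The plan is to follow the classical tilting-theoretic argument, as in \cite[Theorem~5.4]{GLS-3}, but using the $\Ext^1$-symmetry of Corollary \ref{C:Ext-symmetry} in place of the stably $2$-Calabi--Yau property. Set $E_1 = \End_\La(T_1)$, $E_2 = \End_\La(T_2)$ and $T = \Hom_\La(T_2, T_1)$, viewed as a left $E_1$-module via postcomposition; then $E_2^{op}$ acts on $T$ by precomposition, and one checks this action gives an algebra homomorphism $E_2^{op} \to \End_{E_1}(T)$. The first step is to verify that $T$ satisfies the three defining conditions of a (classical, i.e.\ projective dimension $\leq 1$) tilting module over $E_1$.

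First, $\mathrm{proj.dim}_{E_1}(T) \leq 1$: this is exactly Corollary \ref{C:projdim} applied with $X = T_2$ (both $T_1$ and $T_2$ are rigid and $T_1$ is basic maximal rigid). Second, $\Ext^1_{E_1}(T, T) = 0$: here I would use that applying $\Hom_\La(-, T_1)$ to the short exact sequence of Corollary \ref{C:ses} (with $X = T_2$, so we get $0 \to T_2 \to T_1' \to T_1'' \to 0$ with $T_1', T_1'' \in \add(T_1)$) yields the projective resolution
\[
	0 \to \Hom_\La(T_1'', T_1) \to \Hom_\La(T_1', T_1) \to \Hom_\La(T_2, T_1) \to 0
\]
over $E_1$, and then apply $\Hom_{E_1}(-, T)$; the vanishing of $\Ext^1_{E_1}(T,T)$ follows because $\Hom_\La(-,T_1)$ is fully faithful on $\add(T_2)$ (as $\Hom_\La(T_2,T_1)$ is a projective generator argument) combined with $\Ext^1_\La(T_2, T_1') = 0$ by rigidity. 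Third, there is an exact sequence $0 \to E_1 \to T^0 \to T^1 \to 0$ with $T^i \in \add(T)$: this comes from applying $\Hom_\La(-, T_1)$ to a short exact sequence $0 \to T_1 \to T_2' \to T_2'' \to 0$ with $T_2', T_2'' \in \add(T_2)$, which exists by Corollary \ref{C:ses} with the roles of $T_1$ and $T_2$ swapped (using that $T_2$ is basic maximal rigid and $T_1$ is rigid).

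Once $T$ is known to be tilting over $E_1$, the identification $\End_{E_1}(T) \cong E_2^{op}$ is the heart of the matter. The natural map $E_2^{op} \to \End_{E_1}(T)$ given by precomposition is injective because $T_2$ is a generator in a suitable sense; for surjectivity I would argue that $\Hom_\La(-, T_1)$ induces an equivalence between $\add(T_2)$ and $\add_{E_1}(T)$ (this is where maximal rigidity of $T_1$ together with Corollary \ref{C:ses} is used: every module in $\add(T_2)$ has a presentation by $\add(T_1)$, and $\Hom_\La(-,T_1)$ sends such presentations to projective presentations over $E_1$), hence $\End_{E_1}(T) = \End_{E_1}(\Hom_\La(T_2,T_1)) \cong \End_\La(T_2)^{op} = E_2^{op}$. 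Finally, the derived equivalence $D^b(\modu E_1) \simeq D^b(\modu E_2)$ is then immediate from the classical tilting theorem (Happel), since a tilting module over $E_1$ with endomorphism algebra $E_2^{op}$ gives a derived equivalence between $E_1$ and $(E_2^{op})^{op} = E_2$.

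The main obstacle I anticipate is the surjectivity of $E_2^{op} \to \End_{E_1}(T)$, equivalently showing that $\Hom_\La(-, T_1)$ is full (not just faithful) on $\add(T_2)$; this is the step where one genuinely needs that $T_1$ is \emph{maximal} rigid (not merely rigid) and needs to push the approximation sequences through carefully, and it is exactly where the asymmetry between "rigid" and "maximal rigid" in Corollaries \ref{C:ses}--\ref{C:projdim} must be exploited symmetrically in both $T_1$ and $T_2$. Everything else is a faithful transcription of \cite[Section~5]{GLS-3} with Corollary \ref{C:Ext-symmetry} substituted for the $2$-Calabi--Yau symmetry.
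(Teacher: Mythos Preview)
Your approach is essentially the same as the paper's: the paper's proof consists entirely of the sentence ``Use Corollaries \ref{C:ses} and \ref{C:projdim}, and follow the proof of \cite[Theorem~5.3.2]{I} (cf.\ also the comments following Theorem 5.4 in \cite{GLS-3})'', and you are carrying out precisely that program.

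There is, however, a concrete slip in your verification of the third tilting condition. Applying the contravariant functor $\Hom_\La(-,T_1)$ to $0 \to T_1 \to T_2' \to T_2'' \to 0$ yields
\[
0 \to \Hom_\La(T_2'',T_1) \to \Hom_\La(T_2',T_1) \to \Hom_\La(T_1,T_1) \to \Ext^1_\La(T_2'',T_1) \to \cdots,
\]
which places $E_1$ at the \emph{right} end, i.e.\ gives an $\add(T)$-resolution of $E_1$ rather than the required coresolution $0 \to E_1 \to T^0 \to T^1 \to 0$. What you actually need is the dual of Corollary \ref{C:ses}: a short exact sequence $0 \to T_2'' \to T_2' \to T_1 \to 0$ with $T_2',T_2'' \in \add(T_2)$, obtained from a minimal right $\add(T_2)$-approximation of $T_1$ (the kernel lies in $\add(T_2)$ by the dual of Lemma \ref{L:mutation is rigid}, using Corollary \ref{C:Ext-symmetry} with the roles of source and target exchanged). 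Applying $\Hom_\La(-,T_1)$ to \emph{this} sequence then gives $0 \to E_1 \to \Hom_\La(T_2',T_1) \to \Hom_\La(T_2'',T_1) \to \Ext^1_\La(T_1,T_1)=0$, which is the coresolution you want. This is a routine fix and is implicit in the referenced proofs; once corrected, your outline matches the paper's intended argument.
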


\begin{proof}
Use Corollaries \ref{C:ses} and \ref{C:projdim}, and follow the proof of \cite[Theorem~5.3.2]{I} (cf.\ also the comments following Theorem 5.4 in \cite{GLS-3}).
\end{proof}

\begin{proposition}\label{P:exchange pair}
	Let $T \oplus X$ be a basic $\gamma$-equivariant rigid module in $\modu \Lambda$ such that
	\begin{itemize}
		\item{$X$ is minimal $\gamma$-equivariant;	}
		\item{$\Lambda \in \add(T)$.}
	\end{itemize}
	Then for $0 \leq i \leq n = |X|-1$ there exists a short exact sequence
	\[
		\xymatrix{S_i & = &0 \ar[r] & {}_{\gamma^i} \tilde{X} \ar[r]^-{{}_{\gamma^i} \tilde{f}} & {}_{\gamma^i} \tilde{T}' \ar[r]^-{{}_{\gamma^i} \tilde{g}}& {}_{\gamma^i} \tilde{Y} \ar[r] & 0}
	\]
	such that taking the direct sum yields an exact sequence
	\begin{eqnarray}\label{E:direct sum}
		\xymatrix{\bigoplus_{i=0}^{n}S_i & = & 0 \ar[r] & X \ar[r]^-f & T' \ar[r]^-g & Y \ar[r] & 0}
	\end{eqnarray}
	with the following properties:
	\begin{itemize}
		\item[(a)]{$f = \bigoplus_{i=0}^n {}_{\gamma^i} \tilde{f}$ is a minimal left $\add(T)$-approximation and $g = \bigoplus_{i=0}^n {}_{\gamma^i} \tilde{g}$ is a minimal right $\add(T)$-approximation.}
		\item[(b)]{$Y = \bigoplus_{i=0}^n {}_{\gamma^i} \tilde{Y}$ is minimal $\gamma$-equivariant, and $X \ncong Y$.}
		\item[(c)]{$T \oplus Y$ is basic $\gamma$-equivariant rigid.}
	\end{itemize}
\end{proposition}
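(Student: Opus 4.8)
The plan is to build the sequence $S_0$ for the single indecomposable summand $\tilde X$ of $X$ first, and then exploit $\gamma$-equivariance to transport it around the $\gamma$-orbit and glue the pieces together. First I would take a minimal left $\add(T)$-approximation $\tilde f \colon \tilde X \to \tilde T'$ of $\tilde X$. Since $\Lambda \in \add(T)$ and $\tilde X$ is rigid (as a summand of the rigid module $T \oplus X$), standard arguments as in \cite[Section~5]{GLS-3} show $\tilde f$ is injective: the obstruction to injectivity is an extension in $\Ext^1_\Lambda(\tilde X, \Lambda)$, and $\Lambda$ being injective kills this. (More carefully, one notes that $\tilde X$ embeds into some module in $\add(\Lambda) \subseteq \add(T)$, so a minimal left $\add(T)$-approximation must be injective.) This gives the short exact sequence $S_0 \colon 0 \to \tilde X \xrightarrow{\tilde f} \tilde T' \xrightarrow{\tilde g} \tilde Y \to 0$. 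Applying the exact functor ${}_{\gamma^i}(-)$ and using ${}_{\gamma^i}T \cong T$ (so ${}_{\gamma^i}$ sends $\add(T)$-approximations to $\add(T)$-approximations, and preserves minimality since $\gamma$ is an automorphism), we obtain each $S_i$, and their direct sum is the claimed exact sequence \eqref{E:direct sum} with $X = \bigoplus_i {}_{\gamma^i}\tilde X$ and $Y = \bigoplus_i {}_{\gamma^i}\tilde Y$; part (a) is then immediate, since a direct sum of minimal left (resp. right) $\add(T)$-approximations is again one.

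For (c), that $T \oplus Y$ is rigid: I would apply Lemma \ref{L:mutation is rigid} to the full sequence \eqref{E:direct sum} — here $X$ plays the role of the ``$X$'' in that lemma and $f$ is a left $\add(T)$-approximation, so $T \oplus Y$ is rigid. That $T \oplus Y$ is $\gamma$-equivariant follows from ${}_\gamma T \cong T$ together with ${}_\gamma Y \cong \bigoplus_i {}_{\gamma^{i+1}}\tilde Y \cong Y$ (using ${}_{\gamma^{n+1}}\tilde Y \cong \tilde Y$, which we must verify — see below). Basicness of $T \oplus Y$: the summands of $Y$ are pairwise non-isomorphic by minimality of the orbit, and no summand of $Y$ lies in $\add(T)$ because $\tilde g$ is a \emph{minimal} right approximation, so $\tilde Y$ has no summand in $\add(T)$ (a summand of $\tilde Y$ in $\add(T)$ would split off, contradicting minimality of $\tilde g$), and hence neither does any ${}_{\gamma^i}\tilde Y$ since ${}_{\gamma^i}$ fixes $\add(T)$.

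For (b), that $Y$ is minimal $\gamma$-equivariant and $X \ncong Y$: I first check $\tilde Y$ is indecomposable. Since $\tilde f$ is a \emph{minimal} left $\add(T)$-approximation, one argues as in \cite{GLS-3} that $\tilde T'$ and $\tilde Y$ have no common summands and in fact $\tilde Y$ is indecomposable — this uses that $T \oplus X$ is maximal rigid, so the ``complement'' of $\tilde X$ over $T$ is essentially unique, mirroring \cite[Proposition~6.7]{GLS-3}. Given $\tilde Y$ indecomposable, the orbit $\{{}_{\gamma^i}\tilde Y\}$ has length dividing $n+1$; to see it has length exactly $n+1$ I would use the $\Ext$-symmetry: if the orbit of $\tilde Y$ were shorter, then $Y$ would not be basic after all, or one derives a contradiction with maximal rigidity of $T \oplus Y$ via Lemma \ref{L:maximal rigids are invariant}-style reasoning (any shorter orbit would allow enlarging $T \oplus Y$ by a missing $\gamma$-translate). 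Finally $X \ncong Y$ because $\tilde f \ne 0$ is not a split monomorphism (as $\tilde X \notin \add(T)$ — it is a non-$T$ summand of the basic module $T \oplus X$), so $\tilde Y \ne 0$ and $\tilde Y \ncong \tilde X$, the latter since otherwise $S_0$ would force $\tilde X$ to be a summand of $\tilde T' \in \add(T)$.

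The main obstacle I anticipate is part (b): proving $\tilde Y$ is indecomposable and the orbit has full length $n+1$. The indecomposability is the crux — it is where the maximal rigidity hypothesis and the exchange-graph rigidity of \cite{GLS-3} really enter, and it is not purely formal; everything else is a fairly mechanical adaptation of \cite[Section~5]{GLS-3} once the $\Ext^1$-symmetry of Proposition \ref{P:Ext-symmetry} and Corollary \ref{C:Ext-symmetry} is in hand. I would need to be careful that the minimality of the approximations is preserved under the twist functors and that no summand of $Y$ accidentally lands in $\add(T)$, both of which follow from $\gamma$ being an automorphism fixing $\add(T)$ but deserve an explicit line.
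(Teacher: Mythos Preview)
Your overall architecture matches the paper's: construct $S_0$ for the indecomposable $\tilde X$ via a minimal left $\add(T)$-approximation, twist by $\gamma^i$, and take the direct sum. The paper also imports indecomposability of $\tilde Y$, $\tilde X \ncong \tilde Y$, and basicness of $T \oplus \tilde Y$ directly from the analogue of \cite[Proposition~5.6]{GLS-3}, then handles (a)--(c) much as you outline.

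There is, however, a genuine gap in your treatment of (b). You repeatedly invoke maximal rigidity of $T \oplus X$ (for indecomposability of $\tilde Y$ you cite \cite[Proposition~6.7]{GLS-3}, and for the orbit length you appeal to ``maximal rigidity of $T \oplus Y$'' and Lemma~\ref{L:maximal rigids are invariant}-style reasoning). But the proposition does \emph{not} assume $T \oplus X$ is maximal rigid --- only basic $\gamma$-equivariant rigid with $\Lambda \in \add(T)$. The correct reference for indecomposability of $\tilde Y$ is \cite[Proposition~5.6]{GLS-3}, which works at the level of basic rigid modules; maximal rigidity is not needed and not available.

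More importantly, your orbit-length argument is both vague and resting on the unavailable hypothesis. The paper's argument here is clean and purely formal: suppose ${}_{\gamma^k}\tilde Y \cong \tilde Y$ for some $1 \le k \le n$. By uniqueness (up to isomorphism) of the minimal right $\add(T)$-approximation of $\tilde Y$, this forces ${}_{\gamma^k}\tilde T' \cong \tilde T'$, and hence the kernels satisfy ${}_{\gamma^k}\tilde X \cong \tilde X$ --- contradicting that $X$ is minimal $\gamma$-equivariant with $|X| = n+1$. The same device gives ${}_{\gamma^{n+1}}\tilde Y \cong \tilde Y$. This is the missing idea: transport periodicity of $\tilde Y$ back to $\tilde X$ via uniqueness of the approximation, rather than forward via rigidity considerations. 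Similarly, for $X \ncong Y$ the paper argues directly that the full sequence \eqref{E:direct sum} is non-split (since $X \notin \add(T)$) and $X$ is rigid, so $X \ncong Y$; your version via $\tilde X \ncong \tilde Y$ is fine once you supply the rigidity-implies-split step, but the orbit-length issue is the real lacuna.
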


\begin{proof}
	It follows analogously to the proof of \cite[Proposition 5.6]{GLS-3} that we have a short exact sequence
	\[
		\xymatrix{0 \ar[r] & \tilde{X} \ar[r]^-{\tilde{f}} & \tilde{T}' \ar[r]^-{\tilde{g}}& \tilde{Y} \ar[r] & 0}
	\]
	such that
	\begin{itemize}
		\item{$\tilde{f}$ is a minimal left $\add(T)$-approximation and $\tilde{g}$ is a minimal right $\add(T)$-approximation.}
		\item{$\tilde{Y}$ is indecomposable and $\tilde{X} \ncong \tilde{Y}$.}
		\item{$T \oplus \tilde{Y}$ is basic rigid.}
	\end{itemize}
	Consider now the basic module $X \cong \bigoplus_{i=0}^n {}_{\gamma^i}\tilde{X}$.	
	\begin{itemize}
	
	\item[(a)]
	Since $\tilde{f}$ is a minimal left $\add(T)$-approximation of $\tilde{X}$, we get that ${}_{\gamma^i} \tilde{f}$ is a minimal left $\add(T)$-approximation of ${}_{\gamma^i} \tilde{X}$ for $0 \leq i \leq n$, since by assumption $\add(T)$ is $\gamma$-equivariant. Since minimal approximations commute with direct sums, we obtain that $f = \bigoplus_{i=0}^n {}_{\gamma^i}\tilde{f}$ is a minimal left $\add(T)$-approximation. Dually, the map $g = \bigoplus_{i=0}^n {}_{\gamma^i}\tilde{g}$ is a minimal right $\add(T)$-approximation.
	
	\item[(b)]
	Since $X \notin \add(T)$, sequence (\ref{E:direct sum}) does not split, and since $X$ is rigid, it follows that $X \ncong Y$. Furthermore, we have ${}_{\gamma^i}\tilde{Y} \ncong {}_{\gamma^j} \tilde{Y}$ for any $0 \leq i<j \leq n$, or equivalently $\tilde{Y} \ncong {}_{\gamma^k} \tilde{Y}$ for any $1 \leq k \leq n$. Else, by uniqueness (up to isomorphism) of the minimal right $\add(T)$-approximation $\tilde{g}$ we would have $\tilde{T}' \cong {}_{\gamma^k} \tilde{T}'$, and thus isomorphic kernels $\tilde{X} \cong {}_{\gamma^k} \tilde{X}$; a contradiction to the assumption. By an analogous argument, we must have ${}_{\gamma^n}\tilde{Y} \cong \tilde{Y}$. Therefore, $Y = \bigoplus_{i=0}^n {}_{\gamma^i} \tilde{Y}$ is minimal $\gamma$-equivariant.

	\item[(c)]
	By Lemma \ref{L:mutation is rigid}, the module $T \oplus Y = T \oplus \bigoplus_{i=0}^n {}_{\gamma^i}\tilde{Y}$ is rigid. Furthermore, it is basic: We know that $T \oplus \tilde{Y}$ is basic. Analogously one can show that $T \oplus {}_{\gamma^i} \tilde{Y}$ is basic for any $0 \leq i \leq n$, and we have ${}_{\gamma^i}\tilde{Y} \ncong {}_{\gamma^j} \tilde{Y}$ for any $0 \leq i < j \leq n$. Since both $T$ and $Y$ are $\gamma$-equivariant, so is $T \oplus Y$.
	\end{itemize}	
\end{proof}

\subsection{Exchange pairs and exchange sequences}

The following definition is analogous to the definitions of exchange pair, exchange sequence, etc. in \cite[Section~5]{GLS-3}. Note that we keep track of the order of the two arguments $X$ and $Y$, whereas this is not needed in the classical definition.

\begin{definition}
	In the situation of Proposition \ref{P:exchange pair}, we call $(X = \bigoplus_{i=0}^n {}_{\gamma^i} \tilde{X},Y = \bigoplus_{i=0}^n {}_{\gamma^i} \tilde{Y})$ a {\em pointed exchange pair with base $(\tilde{X},\tilde{Y})$ } associated to $T$, and we call the sequence
	\[
		\xymatrix{0 \ar[r] & X \ar[r]^-f & T' \ar[r]^-g & Y \ar[r] & 0}
	\]
	the {\em exchange sequence} starting in $X$ and ending in $Y$. The module $T \oplus Y$ is called the {\em mutation of $T \oplus X$ in direction $X$} and we write
	\[
		\mu_X(T \oplus X) = T \oplus Y.
	\]
\end{definition}

\begin{proposition}\label{P:exchange backwards}
	Let $X = \bigoplus_{i=0}^{n} {}_{\gamma^i}\tilde{X}$ and $Y= \bigoplus_{i=0}^{n} {}_{\gamma^i}\tilde{Y}$ be basic rigid minimal $\gamma$-equivariant modules in $\modu \Lambda$  with $n = |X|-1 = |Y|-1$. 
 Assume that for $0 \leq i \leq n$ we have
\[
	\dim(\Ext^1_\Lambda(  \tilde{Y},  {}_{\gamma^i} \tilde{X})) = 	\begin{cases}
											1 & \text{if} \; i = 0\\
											0 & \text{if} \; i \neq 0
										\end{cases}
\] 
and let
	\begin{eqnarray}\label{E:indecomposable}
		\xymatrix{0 \ar[r] & \tilde{X} \ar[r]^-{\tilde{f}} & \tilde{M} \ar[r]^-{\tilde{g}} & \tilde{Y} \ar[r] & 0}
	\end{eqnarray}
	be a non-split exact sequence. Set
	\begin{eqnarray}\label{E:exchange sequence}
		\xymatrix{0 \ar[r] & X \ar[r]^-f & M \ar[r]^-g & Y \ar[r] & 0 \; \cong \; \bigoplus_{i=0}^n  \Big( 0 \ar[r] & {}_{\gamma^i} \tilde{X} \ar[r]^-{ {}_{\gamma^i}\tilde{f}} &  {}_{\gamma^i} \tilde{M} \ar[r]^-{ {}_{\gamma^i} \tilde{g}} &  {}_{\gamma^i} \tilde{Y} \ar[r] & 0 \Big). }
	\end{eqnarray}
	Then $M \oplus X$ and $M \oplus Y$ are rigid, and $X, Y \notin \add(M)$. If additionally there exists a module $T$ in $\modu \Lambda$ such that $T \oplus X$ and $T \oplus Y$ are basic maximal rigid, then $f$ is a minimal left $\add(T)$-approximation and $g$ is a minimal right $\add(T)$-approximation.
\end{proposition}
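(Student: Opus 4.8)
The plan is to follow the pattern of \cite[Proposition~5.7]{GLS-3}, adapted to the $\gamma$-equivariant setting using the $\Ext^1$-symmetry from Proposition~\ref{P:Ext-symmetry}. First I would establish the statement for the ``base'' sequence \eqref{E:indecomposable}, i.e.\ show that $\tilde{M} \oplus \tilde{X}$ and $\tilde{M} \oplus \tilde{Y}$ are rigid with $\tilde{X}, \tilde{Y} \notin \add(\tilde{M})$. Rigidity of $\tilde{M}$ itself and of $\tilde{M} \oplus \tilde{X}$ follows as in the proof of \cite[Proposition~5.7]{GLS-3}: apply $\Hom_\Lambda(-, \tilde{X})$ and $\Hom_\Lambda(\tilde{X}, -)$ to \eqref{E:indecomposable}, use rigidity of $\tilde{X}$, and use the hypothesis $\dim \Ext^1_\Lambda(\tilde{Y}, \tilde{X}) = 1$ together with $\Ext^1$-symmetry (valid since $\tilde{X}$ and $\tilde{Y}$ are summands of the $\gamma$-equivariant modules $X$, $Y$) to control the connecting maps; the single nonsplit extension is ``used up'' by the sequence, forcing $\Ext^1_\Lambda(\tilde{M}, \tilde{X}) = 0$, and dually $\Ext^1_\Lambda(\tilde{M}, \tilde{Y}) = 0$. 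That $\tilde{X} \notin \add(\tilde{M})$ and $\tilde{Y} \notin \add(\tilde{M})$ follows because \eqref{E:indecomposable} is nonsplit together with the rigidity just obtained (a summand in common would split the sequence).

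Next I would promote this to \eqref{E:exchange sequence} by taking the direct sum over the $\gamma$-orbit. Since $\add(T)$, and more importantly the relevant $\Ext$-vanishing, is compatible with twisting by $\gamma$ — concretely, $\Ext^1_\Lambda({}_{\gamma^i}\tilde{Y}, {}_{\gamma^j}\tilde{X}) \cong \Ext^1_\Lambda(\tilde{Y}, {}_{\gamma^{j-i}}\tilde{X})$, which by hypothesis is nonzero only when $i = j$ — applying the twist ${}_{\gamma^i}(-)$ to the base computations gives $\Ext^1_\Lambda({}_{\gamma^i}\tilde{M}, {}_{\gamma^i}\tilde{X}) = 0$ for each $i$, and the cross terms $\Ext^1_\Lambda({}_{\gamma^i}\tilde{M}, {}_{\gamma^j}\tilde{X})$ for $i \neq j$ vanish because there are no extensions between $\tilde{X}$ and $\tilde{Y}$ in those twists to obstruct them. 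Summing, $M \oplus X = \bigoplus_i ({}_{\gamma^i}\tilde{M} \oplus {}_{\gamma^i}\tilde{X})$ is rigid, and likewise $M \oplus Y$; and $X, Y \notin \add(M)$ follows orbit-component-wise from the base case. I should also note $X$ and $Y$ are genuinely $\gamma$-equivariant and basic by the minimal $\gamma$-equivariance hypothesis, so these direct sums are the expected modules.

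Finally, for the approximation claim, suppose $T$ exists with $T \oplus X$ and $T \oplus Y$ basic maximal rigid. Then $T$ is $\gamma$-equivariant (all basic maximal rigid modules are, by our standing assumption) and $\Lambda \in \add(T)$ up to adjusting, so Proposition~\ref{P:exchange pair} applies: the mutation of $T \oplus X$ in direction $X$ produces a short exact sequence $0 \to X \to T' \to Y' \to 0$ with $f'$ a minimal left $\add(T)$-approximation, $g'$ a minimal right $\add(T)$-approximation, $T \oplus Y'$ basic maximal rigid, and $X \ncong Y'$. Since $T \oplus Y$ is also basic maximal rigid by hypothesis and $Y'$ is the unique module (up to isomorphism) completing $T$ in this way — the minimal right $\add(T)$-approximation of $X$ is unique, hence so is its cokernel — we get $Y' \cong Y$, and then uniqueness of minimal approximations forces $f' = f$ and $g' = g$ (up to isomorphism of sequences), identifying $f$ and $g$ as the asserted minimal approximations. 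The one point requiring care is the uniqueness argument pinning down $Y' \cong Y$: I would argue that any two basic maximal rigid modules of the form $T \oplus Z$ with $Z$ minimal $\gamma$-equivariant and $Z \ncong X$ must agree, because the left $\add(T)$-approximation of $X$ and its cokernel are determined up to isomorphism, using Corollary~\ref{C:ses} to guarantee such a sequence exists and the rigidity bookkeeping above to see the cokernel has the right number of summands. This uniqueness step is the main obstacle; everything else is a fairly mechanical transfer of the arguments of \cite[Section~5]{GLS-3} through the $\Ext^1$-symmetry of Proposition~\ref{P:Ext-symmetry}.
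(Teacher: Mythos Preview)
Your proposal has two genuine gaps. First, Proposition~\ref{P:Ext-symmetry} requires the module appearing as the second argument to be $\gamma$-equivariant, not merely a summand of one; when $n \geq 1$ the indecomposables $\tilde{X}$, $\tilde{Y}$ are not $\gamma$-equivariant, so your base-level invocation of $\Ext^1$-symmetry is not justified as stated. The paper circumvents this by working with the full $\gamma$-equivariant $X$, $Y$, $M$ throughout: it applies $\Hom_\La(-,\tilde{X})$ to the \emph{whole} sequence~\eqref{E:exchange sequence}, argues that $\Hom_\La(f,\tilde{X})$ is not surjective (else the projection $X \to \tilde{X}$ would factor through $f$, forcing $X \in \add(M)$), so the connecting map onto the one-dimensional $\Ext^1_\La(Y,\tilde{X})$ is surjective and $\Ext^1_\La(M,\tilde{X}) = 0$; then $\gamma$-equivariance of $M$ gives $\Ext^1_\La(M,{}_{\gamma^i}\tilde{X}) = 0$ for all $i$, and Proposition~\ref{P:Ext-symmetry} applied to the genuinely $\gamma$-equivariant $X$ and $Y$ gives rigidity of $M$. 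Second, your claim that $\tilde{X},\tilde{Y} \notin \add(\tilde{M})$ because ``a summand in common would split the sequence'' does not follow: even if $\tilde{M} \cong \tilde{X} \oplus M'$, there is no a priori reason $\tilde{f}$ is split. The paper uses Riedtmann's degeneration result \cite[Proposition~3.4]{Riedtmann}: if $M \cong X \oplus M'$ then $M'$ degenerates to $Y$, and since $Y$ is rigid this forces $M' \cong Y$, splitting~\eqref{E:exchange sequence}.

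For the approximation claim the paper again takes a more direct route than your uniqueness detour: it shows $\Ext^1_\La(M,T) = 0$ by applying $\Hom_\La(-,T)$ to~\eqref{E:exchange sequence}, and then $M \in \add(T)$ because $T$ is maximal rigid and $M \oplus T$ is rigid; minimality of $f$ and $g$ follows. Your route via Proposition~\ref{P:exchange pair} would require identifying the abstract $Y'$ it produces with your given $Y$, but Proposition~\ref{P:exchange pair} only asserts $T \oplus Y'$ is \emph{rigid}, not maximal rigid, so your uniqueness argument does not close.
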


\begin{proof}
	{\bf $X,Y \notin \add(M)$:} For this part, we use a similar argument to the proof of \cite[Lemma~5.10]{GLS-3}. Assume $X \in \add(M)$. Since $X$ is basic, we get $M \cong X \oplus M'$ for some $\Lambda$-module $M'$ and Sequence (\ref{E:exchange sequence}) reads as
	\[
		0 \to X \to X \oplus M' \to Y \to 0.
	\]
By Riedtmann \cite[Proposition~3.4]{Riedtmann} $M'$ degenerates to $Y$. Since $Y$ is rigid, this implies $M' = Y$. Thus the above sequence splits; a contradiction. Dually one shows that $Y \notin \add(M)$.

{\bf $\Ext_\Lambda^1(M, X \oplus Y)=0$:} For this part, we use a similar argument to the proof of \cite[Lemma~5.11]{GLS-3}. Apply $\Hom_\Lambda(-,\tilde{X})$ to sequence (\ref{E:exchange sequence}). This yields an exact sequence
\[
	\xymatrixcolsep{2.5pc}\xymatrix{0 \ar[r] & \Hom_\Lambda(Y,\tilde{X}) \ar[r] & \Hom_\Lambda(M,\tilde{X}) \ar[r]^-{\Hom_\Lambda(f,\tilde{X})} & \Hom_\Lambda(X,\tilde{X}) \\
				  \ar[r]^-\delta 				& \Ext^1_\Lambda(Y,\tilde{X}) \ar[r] 				& \Ext^1_\Lambda(M,\tilde{X}) \ar[r]  & \Ext^1_\Lambda(X,\tilde{X}) = 0. }
\]
Suppose that $\Hom_\Lambda(f,\tilde{X})$ is surjective. Then the projection $X \to \tilde{X}$ factors through $f \colon X \to M$, and $\tilde{X} \in \add(M)$. However, since $M$ is $\gamma$-equivariant, this would imply $X \in \add(M)$ contradicting the above observation. So $\delta \neq 0$. Since $\dim \Ext^1_\Lambda(Y,\tilde{X}) = 1$ this implies that $\delta$ is surjective. Thus, since $M$ is $\gamma$-equivariant, we obtain $\Ext^1_\Lambda(M, {}_{\gamma^i} \tilde{X}) \cong \Ext^1_\Lambda(M,\tilde{X}) = 0$ for all $0 \leq i \leq n$. Dually one proves that $ \Ext^1_\Lambda(M, {}_{\gamma^i} \tilde{Y}) =0$ for all $0 \leq i \leq n$.

{\bf $\Ext^1_\Lambda(M,M) = 0$:} Applying $\Hom_\Lambda(-,M)$ to sequence (\ref{E:exchange sequence}) yields an exact sequence
\[
	\xymatrix{0 \ar[r] & \Hom_\Lambda(Y,M)) \ar[r] & \Hom_\Lambda(M,M) \ar[r] & \Hom_\Lambda(X,M) \\
				  \ar[r]				& \Ext^1_\Lambda(Y,M) \ar[r] 				& \Ext^1_\Lambda(M,M) \ar[r]  & \Ext^1_\Lambda(X,M). }
\]
Since $X$ and $Y$ are rigid $\gamma$-equivariant, by Proposition \ref{P:Ext-symmetry} and since $\Ext^1(M,X\oplus Y) = 0$, we have $\Ext_\Lambda^1(Y,M) \cong \Ext_\Lambda^1(M,Y) = 0$, as well as $\Ext_\Lambda^1(X,M) \cong \Ext_\Lambda^1(M,X) = 0$. Therefore $\Ext^1_\Lambda(M,M) = 0$.

Now assume additionally that there exists a module $T$ in $\modu\Lambda$ such that $T \oplus X$ and $T \oplus Y$ are basic maximal rigid.

{\bf $\Ext^1_\Lambda(M, T) = 0$:} This follows analogously to the proof of \cite[Lemma~5.12]{GLS-3}, by applying $\Hom_\Lambda(-, T)$ to Sequence (\ref{E:exchange sequence}). 

{\bf $M \in \add(T)$:} This follows analogously to the proof of \cite[Lemma~5.13]{GLS-3}.

Since $\Ext^1_\Lambda(M, T) = 0$ and $M \in \add(T)$, it follows that $f \colon X \to M$ is a minimal left $\add(T)$-approximation. Dually, the morphism $g \colon M \to Y$ is a minimal right $\add(T)$-approximation. This concludes the proof.
\end{proof}

\begin{corollary}\label{C:involutive}
	Let $(X = \bigoplus_{i=0}^n \:_{\gamma^i} \tilde{X},Y= \bigoplus_{i=0}^n {}_{\gamma^i} \tilde{Y})$ be a pointed exchange pair with base $(\tilde{X},\tilde{Y})$ associated to a basic rigid module $T$, such that $T \oplus X$ and $T \oplus Y$ are maximal rigid. Assume further that for $0 \leq i \leq n$ 
	\[
		\dim(\Ext^1_\Lambda( \tilde{Y}, {}_{\gamma^i}\tilde{X})) = 	\begin{cases}
											1 & \text{if} \; i=0\\
											0 & \text{if} \; i \neq 0.
										\end{cases}
	\]
	Then we have
	\[
		\mu_Y(\mu_X(T \oplus X)) = T \oplus X.
	\]
\end{corollary}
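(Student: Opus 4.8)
The plan is to show that mutating $T \oplus X$ in direction $X$ to obtain $T \oplus Y$, and then mutating $T \oplus Y$ in direction $Y$, returns $T \oplus X$. Recall that $\mu_X(T \oplus X) = T \oplus Y$ is built from the exchange sequence $0 \to X \xrightarrow{f} T' \xrightarrow{g} Y \to 0$ given by Proposition \ref{P:exchange pair}, where $f$ is a minimal left $\add(T)$-approximation. To compute $\mu_Y(T \oplus Y)$, I need the exchange sequence starting in $Y$: a short exact sequence $0 \to Y \xrightarrow{f'} T'' \xrightarrow{g'} Z \to 0$ with $f'$ a minimal left $\add(T)$-approximation, and then $\mu_Y(T \oplus Y) = T \oplus Z$; the claim is $Z \cong X$.

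The key idea is to reverse the exchange sequence. First I would record that, by the hypothesis $\dim \Ext^1_\Lambda(\tilde Y, {}_{\gamma^i}\tilde X) = \delta_{i,0}$ together with the $\Ext^1$-symmetry of Proposition \ref{P:Ext-symmetry} (valid since $X$ and $Y$ are $\gamma$-equivariant), we also get control of $\Ext^1_\Lambda(\tilde X, {}_{\gamma^i}\tilde Y)$: indeed $D\Ext^1_\Lambda(\tilde Y, {}_{\gamma^i}\tilde X) \cong \Ext^1_\Lambda({}_{\gamma^i}\tilde X, \tilde Y) \cong \Ext^1_\Lambda(\tilde X, {}_{\gamma^{-i}}\tilde Y)$, so the nonsplit extension of $\tilde Y$ by $\tilde X$ realizing $\Ext^1_\Lambda(\tilde Y, \tilde X)$ is, after reindexing, a nonsplit extension $0 \to \tilde Y \to \tilde M \to \tilde X \to 0$ realizing the one-dimensional space $\Ext^1_\Lambda(\tilde X, \tilde Y)$. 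Thus the base $(\tilde Y, \tilde X)$ satisfies exactly the dimension hypothesis of Proposition \ref{P:exchange backwards} with the roles of $X$ and $Y$ swapped. Now I apply Proposition \ref{P:exchange backwards} to this reversed data: it produces a short exact sequence $0 \to Y \to M \to X \to 0$, obtained as the direct sum of the ${}_{\gamma^i}$-twists of a nonsplit sequence $0 \to \tilde Y \to \tilde M \to \tilde X \to 0$, and — since by hypothesis $T \oplus Y$ and $T \oplus X$ are both basic maximal rigid — the conclusion of that proposition gives that the map $Y \to M$ is a minimal left $\add(T)$-approximation and $M \to X$ is a minimal right $\add(T)$-approximation.

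Therefore $0 \to Y \to M \to X \to 0$ is precisely an exchange sequence starting in $Y$ in the sense of the definition preceding Proposition \ref{P:exchange backwards} / Corollary \ref{C:involutive} (the left $\add(T)$-approximation is minimal, $X$ is minimal $\gamma$-equivariant by Proposition \ref{P:exchange pair}(b), and $X \notin \add(T)$ since $X \notin \add(M) \supseteq$-argument: actually $X \notin \add(T)$ because $T \oplus X$ is basic). Hence $\mu_Y(T \oplus Y)$ is, by definition, $T \oplus X$. Since $T \oplus Y = \mu_X(T \oplus X)$, this yields $\mu_Y(\mu_X(T \oplus X)) = \mu_Y(T \oplus Y) = T \oplus X$, as claimed. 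The only points requiring care are: (i) checking that the exchange sequence produced by Proposition \ref{P:exchange pair} for $(X,Y)$ really does realize the one-dimensional $\Ext^1$ spaces needed to invoke Proposition \ref{P:exchange backwards} in reverse — this follows because the middle term $T'$ lies in $\add(T)$, the sequence is nonsplit, and the $\Ext$-dimension hypothesis is symmetric under $D$ and the $\gamma$-twist as above; and (ii) verifying that "minimal left $\add(T)$-approximation in the sequence $0 \to Y \to M \to X \to 0$" together with the base being indecomposable and $\gamma$-nonequivalent forces the mutation data to match, so that applying the definition of $\mu_Y$ literally returns $T \oplus X$ rather than merely something with the same number of summands. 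The main obstacle is bookkeeping around the $\gamma$-twists and the reindexing $i \mapsto -i$ (which, since $\gamma^{n+1} = \mathrm{id}$ on the relevant orbit, is a permutation of $\{0,\dots,n\}$), ensuring the hypotheses of Proposition \ref{P:exchange backwards} are met with the shifted base.
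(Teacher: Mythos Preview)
Your overall strategy coincides with the paper's: compute $\dim\Ext^1_\Lambda(\tilde X,{}_{\gamma^i}\tilde Y)$ from the hypothesis, then apply Proposition~\ref{P:exchange backwards} with the roles of $X$ and $Y$ swapped to obtain a sequence $0\to Y\to M\to X\to 0$ whose left map is a minimal left $\add(T)$-approximation, whence $\mu_Y(T\oplus Y)=T\oplus X$.

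There is, however, a genuine error in your $\Ext$ computation. You invoke Proposition~\ref{P:Ext-symmetry} to claim
\[
D\Ext^1_\Lambda(\tilde Y,{}_{\gamma^i}\tilde X)\cong \Ext^1_\Lambda({}_{\gamma^i}\tilde X,\tilde Y),
\]
but that proposition requires the first argument to be $\gamma$-equivariant, which the indecomposable $\tilde Y$ is not (unless $n=0$). One must instead use Lemma~\ref{L:Ext twist} directly, which gives
\[
D\Ext^1_\Lambda(\tilde Y,{}_{\gamma^i}\tilde X)\cong \Ext^1_\Lambda({}_{\gamma^i}\tilde X,{}_{\gamma}\tilde Y)\cong \Ext^1_\Lambda(\tilde X,{}_{\gamma^{1-i}}\tilde Y).
\]
Hence $\dim\Ext^1_\Lambda(\tilde X,{}_{\gamma^j}\tilde Y)=\delta_{j,1}$, not $\delta_{j,0}$ as you assert. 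Consequently the base for the reverse direction that satisfies the hypothesis of Proposition~\ref{P:exchange backwards} is $({}_\gamma\tilde Y,\tilde X)$, not $(\tilde Y,\tilde X)$; this is precisely the $\gamma$-twist recorded in the paper's proof and in Remark~\ref{R:twist}. With this correction in place your argument goes through verbatim and agrees with the paper's. Your closing remark about ``reindexing $i\mapsto -i$'' does not capture this: the shift is by $+1$, coming from the twist in Lemma~\ref{L:Ext twist}, and it is the whole point of the corollary that the indecomposable summands get shuffled under the double mutation.
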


\begin{proof} 
Let
	\[
		0 \to X \to T' \to Y \to 0 \cong \bigoplus_{i=0}^n \Big( 0 \to  {}_{\gamma^i} \tilde{X} \to {}_{\gamma^i} \tilde{T}' \to  {}_{\gamma^i} \tilde{Y} \to 0 \Big)
	\]
	be the short exact sequence from Proposition \ref{P:exchange pair}, so we have $\mu_X(T \oplus X) = T \oplus Y$. Note that by Lemma \ref{L:Ext twist} we have
\[
	D\Ext^1(\tilde{X}, {}_{\gamma^i}\tilde{Y}) \cong \Ext^1({}_{\gamma^i} \tilde{Y}, {}_{\gamma}\tilde{X}) \cong \Ext^1(\tilde{Y}, {}_{\gamma^{1-i}}\tilde{X}).
\]
Therefore, by assumption, for $0 \leq i \leq n$ we have
\[
	\dim \Ext^1(\tilde{X}, {}_{\gamma^i}\tilde{Y}) =	\begin{cases} 
												1 & \text{if} \; i = 1 \\
												0 & \text{if} \; i \neq 1.	
											\end{cases}
\]
Further, since $T \oplus X$ and $T \oplus Y$ are basic maximal rigid, Proposition \ref{P:exchange backwards} yields a non-split short exact sequence
	\[
		\xymatrix{0 \ar[r] & Y\ar[r]^-h & M \ar[r] & X \ar[r] & 0 \cong \bigoplus_{i=0}^n \Big( 0 \ar[r] & {}_{\gamma^{i+1}} \tilde{Y} \ar[r] & {}_{\gamma^i} \tilde{M} \ar[r] & {}_{\gamma^i} \tilde{X} \ar[r] & 0 \Big)}
	\]	
	where $h$ is a minimal left $\add(T)$-approximation. Thus $\mu_Y(T \oplus Y) = T \oplus X$. 
\end{proof}

\begin{remark}\label{R:twist} \normalfont
It follows from the Proof of \ref{C:involutive} that if $(X,Y)$ is a pointed exchange pair with base $(\tilde{X}, \tilde{Y})$ associated to $T$ such that $T \oplus X$ and $T \oplus Y$ are basic maximal rigid, then $(Y,X)$ is a pointed exchange pair with base $({}_\gamma \tilde{Y}, \tilde{X})$ associated to $T$. Namely, when we exploit $\Ext^1$-symmetry for $\gamma$-equivariant modules, we have to be mindful of what happens to their summands. Thus, for a pointed exchange pair $(X,Y)$ with base $(\tilde{X},\tilde{Y})$ as in Corollary \ref{C:involutive} where we have that
\[
	\mu_Y(\mu_X(T \oplus X)) = T \oplus X,
\]
the associated exchange sequences decompose in a different manner: We have
\begin{align}\label{S:forwards}
	0 \to X \to T' \to Y \to 0 \cong \; \bigoplus_{i=0}^n \Big(0 \to {}_{\gamma^i} \tilde{X} \to {}_{\gamma^i} \tilde{T}' \to  {}_{\gamma^i} \tilde{Y} \to 0 \Big),
\end{align}
for the exchange sequence starting in $X$ and ending in $Y$ yet the backwards mutation sees a twist by $\gamma$; we have
\begin{align}\label{S:backwards}
		0 \to Y\ \to M \to X \to 0 \cong \; \bigoplus_{i=0}^n \Big(0 \to {}_{\gamma^{i+1}} \tilde{Y} \to {}_{\gamma^i} \tilde{M} \to  {}_{\gamma^i} \tilde{X} \to 0 \Big)
	\end{align}	
for the exchange sequence starting in $Y$ and ending in $X$.
\end{remark}

\begin{remark}\label{R:generalise}\normalfont
More generally, throughout all previous results in this section we could replace $\La$ by any finite dimensional self-injective algebra $A$ with automorphism $\mu$ such that $\Omega_{A^e}^3(A) \cong {}_\mu A_{\mathrm{id}}$. While we do not know if every basic maximal rigid module is $\gamma$-equivariant, all of our previous results starting from Section \ref{S:mutation maximal rigids}, still apply if, whenever the term ``maximal rigid module'' appears, we replace it by ``$\gamma$-equivariant maximal rigid module'', and rely on Proposition \ref{P:Ext-symmetry} rather than Corollary \ref{C:Ext-symmetry}. 

In particular, we could pick $\La$ to be the remaining mesh algebra of Dynkin type, namely $P(\bG_2)$. It does have a $\gamma$-equivariant maximal rigid module, namely the cluster tilting module constructed in Theorem \ref{thm:La_2_ct}.
\end{remark}

\begin{remark}\label{R:IY mutation} \normalfont
	Note that in the stable module category, our mutations induce mutations in the sense of Iyama and Yoshino \cite[Section 5]{IY}. Indeed, in $\underline{\modu} \La$ we have Serre functor $\mathbb{S} = \Omega \mathcal{N}$. The functor denoted by $\mathbb{S}_n$ in \cite{IY}, which is of interest in our case for $n=2$, takes on the form
	\[
		\mathbb{S}_2 = \mathcal{N}\Omega^3.
	\]
Thus for any $M \in \underline{\modu}\La$ we have 
	\[
		\mathbb{S}_2 M \cong {}_\gamma M.
	\]
Note that, in addition to being defined already on the abelian category of modules, our mutation theory for our specific algebras is more explicit, in that it describes precisely which summands can be exchanged, and how mutation induces a twist on the indecomposable summands.
\end{remark}

The following shows that one can mutate a cluster tilting module at any minimal $\gamma$-equivariant non-projective summand.

\begin{proposition}\label{P:unique}
	Let $T$ be a basic cluster tilting module in $\modu \La$, and let $X$ be a minimal $\gamma$-equivariant direct summand of $T$. If $X$ is non-projective, then up to isomorphism there exists exactly one minimal $\gamma$-equivariant $Y$ in $\modu \La$ such that $X \ncong Y$ and $Y \oplus T/X$ is cluster tilting.
\end{proposition}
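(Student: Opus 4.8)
The plan is to follow the strategy already developed in this section, adapting the classical argument from \cite[Section~6]{GLS-3} to the $\gamma$-equivariant setting. First I would use the fact that $T$ is cluster tilting (hence maximal rigid), so by our standing assumptions $T$ is $\gamma$-equivariant; since $X$ is a minimal $\gamma$-equivariant summand, we may write $T = (T/X) \oplus X$ with $T/X$ also $\gamma$-equivariant, and $\Lambda \in \add(T/X)$ because $X$ is non-projective. Thus the pair $(T/X, X)$ satisfies the hypotheses of Proposition~\ref{P:exchange pair}: there is an exchange sequence
\[
	0 \to X \to T' \to Y \to 0
\]
with $f$ a minimal left $\add(T/X)$-approximation, $Y$ minimal $\gamma$-equivariant with $X \ncong Y$, and $(T/X) \oplus Y$ basic $\gamma$-equivariant rigid. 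To see that $(T/X)\oplus Y$ is in fact \emph{maximal} rigid — and therefore cluster tilting, since in our situation maximal rigid modules over a mesh algebra are cluster tilting (cf.\ Theorem~\ref{T:maximal rigid = 2-ct}) — I would invoke the comparison of numbers of indecomposable summands: both $T$ and $(T/X)\oplus Y$ are basic rigid, $(T/X)\oplus Y$ is obtained from $T$ by swapping the $|X|$-summand block $X$ for the equally-sized block $Y$, so $|(T/X)\oplus Y| = |T|$, which by Proposition~\ref{P:number of orbits} (counting $\gamma$-orbits) equals the maximal possible number; alternatively one argues directly via Corollary~\ref{C:Ext-symmetry} that any rigid $Z$ with $Z \oplus (T/X)\oplus Y$ rigid must lie in $\add((T/X)\oplus Y)$, exactly as in the classical case. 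This establishes existence of such a $Y$.

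For uniqueness, suppose $Y'$ is another minimal $\gamma$-equivariant module with $X \ncong Y'$ and $Y' \oplus (T/X)$ cluster tilting. The key point is an $\Ext^1$ count: by maximal rigidity of $T = (T/X)\oplus X$, the space $\Ext^1_\Lambda(X, Y') \oplus \Ext^1_\Lambda(Y', X)$ is nonzero, and by $\gamma$-equivariance and Proposition~\ref{P:Ext-symmetry} (applied to the $\gamma$-equivariant modules $X$ and $Y'$) these two are dual to each other, so both are nonzero and of equal dimension. Writing $Y' = \bigoplus_{i} {}_{\gamma^i}\tilde{Y}'$, one then localizes this nonvanishing to a single base summand: I would show $\dim \Ext^1_\Lambda(\tilde{Y}', {}_{\gamma^i}\tilde{X})$ is $1$ for exactly one value of $i$ (which after relabelling we take to be $0$) and $0$ otherwise. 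This uses that $Y' \oplus (T/X)$ is basic maximal rigid together with the analogue of \cite[Lemma~6.4,~6.5]{GLS-3} — the relevant $\Ext^1$-space is one-dimensional because the middle term of the non-split extension must sit in $\add(T/X)$, and the $\gamma$-equivariant bookkeeping forces the extension to concentrate in one orbit position. Once this holds, Proposition~\ref{P:exchange backwards} applies to the pair $(\tilde{X}, \tilde{Y}')$: the non-split exchange sequence $0 \to X \to M \to Y' \to 0$ has $M \in \add(T/X)$ and $f$ a minimal left $\add(T/X)$-approximation. But minimal left $\add(T/X)$-approximations of $X$ are unique up to isomorphism, so $M \cong T'$ and hence $Y' \cong Y$.

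The main obstacle I expect is the $\Ext^1$-dimension-one step: showing that the obstruction space $\Ext^1_\Lambda(\tilde{Y}', {}_{\gamma^i}\tilde{X})$ is genuinely one-dimensional and supported on a single orbit index, rather than merely nonzero somewhere. In the classical stably $2$-Calabi–Yau setting this is where one uses that the middle term of the universal extension lies in the almost-complete cluster tilting subcategory and then reads off dimensions; here one must additionally track how $\gamma$ permutes the indecomposable summands (as flagged in Remark~\ref{R:twist}), and ensure that the "one-dimensional" conclusion is compatible with the minimal approximation being built orbit-by-orbit as in Proposition~\ref{P:exchange pair}(a). Everything else is a direct transcription of the arguments in \cite[Section~6]{GLS-3}, with Corollary~\ref{C:Ext-symmetry} and Proposition~\ref{P:Ext-symmetry} substituting for the $2$-Calabi–Yau symmetry, and with $\gamma$-equivariance of all maximal rigid modules (Theorem~\ref{T:short list}) guaranteeing that we never leave the class of modules to which our symmetry applies.
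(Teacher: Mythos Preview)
Your approach is substantially more elaborate than the paper's, and genuinely different. The paper's proof is a single line: by Remark~\ref{R:IY mutation}, in the stable category $\underline{\modu}\La$ the functor $\mathbb{S}_2 = \mathcal{N}\Omega^3$ acts as ${}_\gamma(-)$, so $T/X$ is precisely an almost complete $2$-cluster tilting subcategory in the sense of Iyama--Yoshino, and \cite[Theorem~5.3]{IY} then gives existence and uniqueness of the minimal $\gamma$-equivariant complement in one stroke. No exchange sequences, no $\Ext^1$-counting, no forward references.

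Your route through Propositions~\ref{P:exchange pair} and~\ref{P:exchange backwards} follows \cite{GLS-3} and is more self-contained in spirit, but the obstacle you flag in the uniqueness step is real and your sketch does not resolve it. To invoke Proposition~\ref{P:exchange backwards} for an arbitrary complement $Y'$ you must first establish $\dim\Ext^1_\La(\tilde{Y}',{}_{\gamma^i}\tilde{X})=\delta_{i0}$; your justification (``the middle term of the non-split extension must sit in $\add(T/X)$'') is precisely what Proposition~\ref{P:exchange backwards} \emph{deduces} from the dimension condition, not a way to obtain it. In the paper, that dimension condition is tied (Lemma~\ref{L:no loops}) to the absence of arrows $\tilde{X}\to{}_{\gamma^i}\tilde{X}$ in the endomorphism quiver, but Lemma~\ref{L:no loops} is stated for the specific exchange pair $(X,Y)$ produced by the approximation, not for an arbitrary $(X,Y')$ --- so you cannot apply it to $Y'$ without already knowing $Y'\cong Y$. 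One can likely close the gap by running the exchange construction from the $Y'$ side, using Theorem~\ref{T:maximal rigid = 2-ct}(1) on $\End_\La((T/X)\oplus Y')$, and then arguing both mutations land on the same module; but this requires the Section~\ref{S:Global dimension}--\ref{S:rep dimension} machinery and some care. The paper simply avoids the issue by passing to the stable category and citing \cite{IY}.
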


\begin{proof}
This follows directly from Remark \ref{R:IY mutation} together with \cite[Theorem~5.3]{IY}.
\end{proof}

\section{Endomorphism algebras of maximal rigid modules}\label{S:Global dimension}

Throughout this section, let $\La$ be a finite dimensional self-injective algebra with automorphism $\mu$ such that $\Omega^3_{A^e}(A) \cong {}_\mu A_{\mathrm{id}}$ and with Nakayama automorphism $\eta$.
As before, we set $\gamma = \eta^{-1} \circ \mu$. In line with Remark \ref{R:generalise}, we do not need to know whether every basic maximal rigid module is $\gamma$-equivariant.

In the following, we study endomorphism algebras and their quivers. The strategy of this section, and the beginning of the next, is analogous to Section \cite[Section~6]{GLS-3}. If $M$ is a module in $\modu \Lambda$, then the quiver of $\End_\Lambda(M)$ has vertices labelled by the indecomposable modules in $\add(M)$. By abuse of notation, we will denote the vertex associated to an indecomposable $\tilde{M}$ in $\add(M)$ by $\tilde{M}$ as well.

\begin{lemma}\label{L:no loops}
	Let $(X = \bigoplus_{i=0}^n \:_{\gamma^i} \tilde{X},Y= \bigoplus_{i=0}^n {}_{\gamma^i} \tilde{Y})$ be a pointed exchange pair with base $(\tilde{X},\tilde{Y})$ associated to a basic rigid module $T$ in $\modu \La$. The following are equivalent:
	\begin{enumerate}
		\item{The quiver of $\End_\La(T \oplus X)$ has no arrows from $\tilde{X}$ to ${}_{\gamma^i}\tilde{X}$ for all $i \in \bZ$.}
		\item{Every radical map $X \to X$ factors through $\add(T)$.}
		\item{For $0 \leq i \leq n$ we have
		\[
			\dim \Ext^1_\La(\tilde{Y}, {}_{\gamma^i}\tilde{X}) = 	\begin{cases}
														1 & \text{if} \; i = 0\\
														0 & \text{else}.
													\end{cases}
		\]}
	\end{enumerate}
\end{lemma}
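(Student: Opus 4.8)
This is the analogue of a standard loop-counting lemma from Geiß--Leclerc--Schröer, adapted to the $\gamma$-equivariant setting. The overarching strategy is to prove the chain of implications $(3) \Rightarrow (2) \Rightarrow (1)$ and then close the loop with $(1) \Rightarrow (3)$; the $\Ext^1$-symmetry from Proposition \ref{P:Ext-symmetry} together with Lemma \ref{L:Ext twist} will be the main tool for translating between $\Hom$-theoretic and $\Ext^1$-theoretic statements, and the pointed exchange sequence from Proposition \ref{P:exchange pair} will be the central object.

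For $(3) \Rightarrow (2)$: assuming the dimension count on $\Ext^1_\La(\tilde Y, {}_{\gamma^i}\tilde X)$, I would invoke Proposition \ref{P:exchange pair} to produce the exchange sequence $0 \to X \xrightarrow{f} T' \xrightarrow{g} Y \to 0$ with $f$ a \emph{minimal} left $\add(T)$-approximation; indeed, the hypothesis of Proposition \ref{P:exchange pair} is exactly that $X$ is minimal $\gamma$-equivariant and $\La \in \add(T)$ — I should check these are in force here, or rather note (following the GLS template) that the relevant input is that such a sequence exists, which one extracts from the nonvanishing of $\Ext^1$ at $i=0$ only. Given a radical endomorphism $h \colon X \to X$, I want to factor it through $\add(T)$. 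Apply $\Hom_\La(X,-)$ to the exchange sequence; the obstruction to lifting $h$ through $f$ lives in $\Ext^1_\La(X, X)$, which vanishes by rigidity of $X$ — so $h$ factors as $f' \circ$ (something), but one must instead argue that $h$ factors through $f \colon X \to T'$, i.e. consider the connecting map into $\Ext^1_\La(Y, X)$ and use that $\Hom_\La(X,f)$ is surjective because the cokernel $\Ext^1_\La(X,X)=0$; this places $h$ in the image of $\Hom_\La(X, T') \to \Hom_\La(X,X)$, giving the factorization through $T' \in \add(T)$. The minimality of $f$ is what makes this clean.

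For $(2) \Rightarrow (1)$: this is essentially the definition of the arrows in the quiver of an endomorphism algebra. The space of arrows in the Gabriel quiver of $\End_\La(T \oplus X)$ from vertex $\tilde X$ to vertex ${}_{\gamma^i}\tilde X$ is $\rad / \rad^2$ of the appropriate $\Hom$-space; if every radical map $X \to X$ factors through $\add(T)$, then in particular every radical map $\tilde X \to {}_{\gamma^i}\tilde X$ (a summand of such) factors through $\add(T)$, hence lies in $\rad^2$ of $\End_\La(T\oplus X)$, so there are no arrows between these vertices. One should be slightly careful that ``factors through $\add(T)$'' for the whole module $X$ implies the componentwise statement, which is immediate since $\add(T)$ is closed under summands and the factorization can be restricted. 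The converse direction within $(1)\Leftrightarrow(2)$ is symmetric.

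For $(1) \Rightarrow (3)$, which I expect to be \textbf{the main obstacle}: here I would run the argument backwards. No arrows $\tilde X \to {}_{\gamma^i}\tilde X$ means $\Hom_\La(X, f)$ type maps force, via a minimal right $\add(T\oplus X)$-approximation argument, that the minimal left $\add(T)$-approximation $f\colon X \to T'$ has cokernel $Y$ whose summands ${}_{\gamma^i}\tilde Y$ are pairwise non-isomorphic and indecomposable — then the long exact sequence obtained by applying $\Hom_\La(-, {}_{\gamma^i}\tilde X)$ to $0 \to X \to T' \to Y \to 0$, combined with $\Ext^1_\La(T', {}_{\gamma^i}\tilde X) = \Ext^1_\La(X,{}_{\gamma^i}\tilde X) = 0$ (rigidity, and $\add(T)$-rigidity after noting ${}_{\gamma^i}\tilde X \in \add(X)$), pins down $\Ext^1_\La(Y, {}_{\gamma^i}\tilde X) \cong \mathrm{coker}\,\Hom_\La(f, {}_{\gamma^i}\tilde X)$; the hypothesis (1) says this cokernel is one-dimensional precisely when $i = 0$ and zero otherwise, because $f$ being a minimal approximation means its failure to hit $\Hom_\La(X, {}_{\gamma^i}\tilde X)$ is governed exactly by the loops we assumed absent. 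Decomposing $\Ext^1_\La(Y, {}_{\gamma^i}\tilde X) = \bigoplus_j \Ext^1_\La({}_{\gamma^j}\tilde Y, {}_{\gamma^i}\tilde X)$ and using $\gamma$-equivariance (so $\Ext^1_\La({}_{\gamma^j}\tilde Y, {}_{\gamma^i}\tilde X) \cong \Ext^1_\La(\tilde Y, {}_{\gamma^{i-j}}\tilde X)$ by applying $({}_{\gamma^{-j}})_*$, legitimate since $\gamma$ is an algebra automorphism) then yields the stated dimension formula for each $\tilde Y \to {}_{\gamma^i}\tilde X$. The delicate point throughout is bookkeeping the $\gamma$-twists — making sure that ``minimal left $\add(T)$-approximation'' interacts correctly with the decomposition $X = \bigoplus {}_{\gamma^i}\tilde X$, which is exactly what part (a) of Proposition \ref{P:exchange pair} was set up to guarantee — and ensuring the one-dimensionality lands at $i=0$ rather than at some shifted index, which should follow from the convention that the base of the exchange pair is $(\tilde X, \tilde Y)$ with $\tilde f\colon \tilde X \to \tilde T'$ the minimal approximation \emph{of $\tilde X$ itself}.
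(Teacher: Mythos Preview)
Your overall chain $(3)\Rightarrow(2)\Rightarrow(1)\Rightarrow(3)$ is reasonable and the implication $(2)\Leftrightarrow(1)$ is fine, matching the paper. However, two steps contain genuine errors.

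\textbf{In $(3)\Rightarrow(2)$ you apply the wrong functor and never use (3).} You write ``apply $\Hom_\La(X,-)$'' and then refer to a map ``$\Hom_\La(X,T')\to\Hom_\La(X,X)$'': no such map arises from $\Hom_\La(X,-)$ applied to $0\to X\xrightarrow{f}T'\to Y\to 0$; the covariant functor gives $\Hom_\La(X,X)\xrightarrow{f_*}\Hom_\La(X,T')$, which is injective but tells you nothing about factoring $h$ through $\add(T)$. What is needed is the \emph{contravariant} functor $\Hom_\La(-,X)$, giving $\Hom_\La(T',X)\xrightarrow{f^*}\Hom_\La(X,X)\to\Ext^1_\La(Y,X)\to 0$, and then you must actually \emph{use} hypothesis (3) to bound $\dim\Ext^1_\La(Y,X)$ and conclude that the non-radical idempotents span the cokernel. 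Your sketch invokes only $\Ext^1_\La(X,X)=0$, which is true for any rigid $X$ regardless of $T$ and cannot by itself force radical maps to factor through $\add(T)$. The paper sidesteps this by applying $\Hom_\La(-,X)$ to the \emph{indecomposable} exchange sequence $0\to\tilde X\xrightarrow{\tilde f}\tilde T'\to\tilde Y\to 0$: then $\mathrm{coker}\,\Hom_\La(\tilde f,X)\cong\Ext^1_\La(\tilde Y,X)$, and (3) says this is one-dimensional, spanned by the inclusion $\tilde X\hookrightarrow X$; hence every radical map $\tilde X\to X$ factors through $\tilde f$, and $\gamma$-equivariance handles the remaining components.

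\textbf{In $(1)\Rightarrow(3)$ your dimension claim is false.} You assert that $\mathrm{coker}\,\Hom_\La(f,{}_{\gamma^i}\tilde X)$ is ``one-dimensional precisely when $i=0$ and zero otherwise''. But ${}_{\gamma^i}\tilde X$ is a summand of $X$ for \emph{every} $0\le i\le n$, so the projection $X\twoheadrightarrow{}_{\gamma^i}\tilde X$ is a non-radical map not factoring through $f$; under (1)/(2) this projection spans the cokernel for \emph{each} $i$, giving $\dim\Ext^1_\La(Y,{}_{\gamma^i}\tilde X)=1$ for all $i$. To finish you still need one more step: the non-split summand sequence $0\to{}_{\gamma^i}\tilde X\to{}_{\gamma^i}\tilde T'\to{}_{\gamma^i}\tilde Y\to 0$ forces $\Ext^1_\La({}_{\gamma^i}\tilde Y,{}_{\gamma^i}\tilde X)\neq 0$, so this single dimension sits in the $j=i$ summand of $\Ext^1_\La(Y,{}_{\gamma^i}\tilde X)=\bigoplus_j\Ext^1_\La({}_{\gamma^j}\tilde Y,{}_{\gamma^i}\tilde X)$, whence $\Ext^1_\La(\tilde Y,{}_{\gamma^k}\tilde X)=\delta_{k,0}$. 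The paper avoids this extra bookkeeping by again using the indecomposable sequence with target the full $X$, which directly yields $\dim\Ext^1_\La(\tilde Y,X)=1$ together with $\Ext^1_\La(\tilde Y,\tilde X)\neq 0$.
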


\begin{proof} 
	We first show that (1) is equivalent to (2), and then that (2) is equivalent to (3).
	
	{\bf (1) $\Rightarrow$ (2):} Assume that $f \colon X \to X$ is a radical map which does not factor through $\add(T)$. Then, in the quiver of $\End_\La(T \oplus X)$, this yields an arrow from ${}_{\gamma^i}\tilde{X}$ to ${}_{\gamma^j}\tilde{X}$ for some $i,j \in \bZ$, and thus an arrow from $\tilde{X}$ to ${}_{\gamma^{j-i}}\tilde{X}$.
	
	{\bf (2) $\Rightarrow$ (1):} If, in the quiver of $\End_\La(T \oplus X)$, we have an arrow from $\tilde{X}$ to ${}_{\gamma^i}\tilde{X}$, it is in the radical of $\End_\La(T \oplus X)$. Furthermore, it is not in $\rad^2(\End_\La(T \oplus X))$, so it cannot factor through $\add(T)$.
	
	{\bf (2) $\Rightarrow$ (3):} Since $(X,Y)$ is a pointed exchange pair with base $(\tilde{X},\tilde{Y})$, there is a short exact sequence
	\begin{equation} \label{E:indecomposable exchange sequence}
		\xymatrix{0 \ar[r] & \tilde{X} \ar[r]^-{\tilde{f}} & \tilde{T}' \ar[r]^-{\tilde{g}} & \tilde{Y} \ar[r] & 0}	
	\end{equation}
	with $\tilde{T}' \in \add(T)$. Applying $\Hom_\La(-,X)$ yields the exact sequence
	\begin{equation} \label{E:exact sequence from indecomposable exchange sequence}
		\xymatrix{0 \ar[r] & \Hom_\La(\tilde{Y},X) \ar[r] & \Hom_\La(\tilde{T}',X) \ar[rr]^-{\Hom_\La(\tilde{f},X)} && \Hom_\La(\tilde{X},X) \ar[r] & \Ext_\La^1(\Tilde{Y},X) \ar[r] & 0}	
	\end{equation}
	By assumption any radical map $h \colon \tilde{X} \to X$ must factor through $\add(T)$, so we must have $h = \mu \circ \theta$ for some $\theta \colon \tilde{X} \to T''$ and $\mu \colon T'' \to X$ with $T'' \in \add(T)$. Now, since $\tilde{f}$ is a left $\add(T)$-approximation, we have $\theta = \psi \circ \tilde{f}$ for some $\psi \colon \tilde{T}' \to T''$, and we obtain a factorization 
	\[
		h = \mu \circ \psi \circ \tilde{f}.
	\]
	This means that $\mathrm{coker}(\Hom_\La(\tilde{f},X))$ is $1$-dimensional; it is spanned by the coset of the inclusion of $\tilde{X}$ into $X$. Therefore, we have
	\[
		\dim \Ext^1_\La(\tilde{Y},X) = 1.
	\]
	Now, by existence of the short exact sequence (\ref{E:indecomposable exchange sequence}) we know that $\Ext_\La^1(\tilde{Y},\tilde{X}) \neq 0$ and the claim follows.
	
	{\bf (3) $\Rightarrow$ (2):} Consider the sequence (\ref{E:exact sequence from indecomposable exchange sequence}). By assumption, $\dim \Ext^1(\tilde{Y},X)=1$. The inclusion $\iota \colon \tilde{X} \to X$ does not factor through $\tilde{f}$, since the short exact sequence (\ref{E:indecomposable exchange sequence}) does not split. So $\mathrm{coker}(\Hom_\La(\tilde{f},X))$ is spanned by the coset of $\iota$. The claim follows.
\end{proof}

Let now $T$ be a basic $\gamma$-equivariant maximal rigid module in $\modu \La$. Note that this is a stronger assumption than we had for $T$ previously, where it was just assumed to be basic rigid. If $\La$ is a mesh algebra of Dynkin type other than $P(\bG_2)$, or any $\La$ where $\gamma$ has order at most $2$, we can choose $T$ to be any basic maximal rigid module by Lemma \ref{L:maximal rigids are invariant}.
Consider the quiver of $E = \End_\La(T)$. We denote by $S_{\tilde{X}}$ the simple $E$-module associated to an indecomposable $\tilde{X} \in \add(T)$. 

\begin{proposition}\label{P:gldim}
	If the quiver of $E$ has no arrows from $\tilde{X}$ to ${}_{\gamma^i}\tilde{X}$ for all indecomposable modules $\tilde{X} \in \add(T)$ and all $i \in \bZ$, then
	\[
		\mathrm{gl.dim}(E)=3.
	\]
\end{proposition}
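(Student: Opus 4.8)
The strategy is to construct, for each simple $E$-module $S_{\tilde{X}}$, a projective resolution of length at most $3$, and to exhibit one simple of projective dimension exactly $3$. The main inputs are the pointed exchange sequences of Proposition \ref{P:exchange pair}, the homological control of Corollary \ref{C:projdim} ($\mathrm{proj.dim}_E \Hom_\La(-,T) \le 1$ for rigid modules), and the hypothesis that the quiver of $E$ has no arrows $\tilde{X} \to {}_{\gamma^i}\tilde{X}$, which by Lemma \ref{L:no loops} is equivalent to the $\Ext^1$-dimension condition that makes the relevant exchange sequences minimal approximations on both sides.

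\textbf{First step: projective resolutions of simples.} For an indecomposable $\tilde{X} \in \add(T)$ with radical map space controlled by the quiver, let $X = \bigoplus_{i=0}^n {}_{\gamma^i}\tilde{X}$ be the minimal $\gamma$-equivariant summand of $T$ it generates. The no-loops hypothesis together with Lemma \ref{L:no loops} gives the exchange sequence $0 \to X \to T' \to Y \to 0$ with $f$ a minimal left $\add(T)$-approximation; applying $\Hom_\La(-,T)$ yields an exact sequence of $E$-modules $0 \to \Hom_\La(Y,T) \to \Hom_\La(T',T) \to \Hom_\La(X,T) \to \underline{\Hom}\text{-type cokernel} \to 0$, where the cokernel is built from the top of $X$, i.e.\ a sum of simples $S_{{}_{\gamma^i}\tilde{X}}$. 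Restricting to the $\tilde X$-isotypic piece (or using that minimal approximations commute with direct sums), one extracts a projective presentation $P_1 \to P_0 \to S_{\tilde{X}} \to 0$ of $E$-modules with $P_0 = \Hom_\La(\tilde X, T)$, $P_1$ a summand of $\Hom_\La(T',T)$, and kernel $\Hom_\La(\tilde Y, T)$ (up to a shift by $\gamma$, per Remark \ref{R:twist}); since $\tilde Y$ is rigid, Corollary \ref{C:projdim} gives $\mathrm{proj.dim}_E \Hom_\La(\tilde Y,T) \le 1$, hence $\mathrm{proj.dim}_E S_{\tilde X} \le 3$. Thus $\mathrm{gl.dim}(E) \le 3$.

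\textbf{Second step: the lower bound.} It remains to show $\mathrm{gl.dim}(E) \ge 3$, i.e.\ that some simple has projective dimension exactly $3$. Here I expect to use that $E$ is not self-injective (it is the endomorphism algebra of a maximal rigid, not tilting, object) combined with the fact that $\La$ itself has infinite global dimension (it is self-injective and not semisimple): if $\mathrm{gl.dim}(E) \le 2$ one would force a contradiction, either via the derived equivalence of endomorphism algebras (Theorem \ref{T:tilting module}) propagating a global dimension bound incompatibly, or by a direct argument that the kernel $\Hom_\La(\tilde Y, T)$ above genuinely has projective dimension $1$ and not $0$ for a suitable choice of $\tilde X$ — concretely, $\Hom_\La(\tilde Y,T)$ is projective over $E$ iff $\tilde Y \in \add(T)$, which fails precisely because $Y \notin \add(T)$ for the exchange pair. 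One then checks the induced long exact sequence does not collapse, giving a simple with $\mathrm{proj.dim}_E = 3$.

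\textbf{Main obstacle.} The genuinely delicate point is the lower bound: proving $\mathrm{gl.dim}(E) \ge 3$ rather than just $\le 3$. The upper bound is a fairly mechanical consequence of the exchange-sequence machinery and Corollary \ref{C:projdim}, essentially transcribing \cite[Section~6]{GLS-3} while bookkeeping the $\gamma$-twists via Remark \ref{R:twist}. The lower bound requires identifying a specific simple whose third syzygy is nonzero; the natural candidate is a simple $S_{\tilde X}$ at a summand $\tilde X$ of $T$ that is \emph{not} $\gamma$-projective, using that for projective $X$ one can have shorter resolutions but for non-projective minimal $\gamma$-equivariant $X$ the exchange sequence is nontrivial and the chain $S_{\tilde X} \leftarrow \Hom_\La(\tilde X,T) \leftarrow \Hom_\La(\tilde T',T) \leftarrow \Hom_\La(\tilde Y, T) \leftarrow P_2 \leftarrow 0$ cannot be shortened because each connecting map is radical. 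I would verify this by carefully tracking tops and socles of the $E$-modules involved, and appealing to the fact established earlier that mutation genuinely changes $T$ (so the relevant $\Ext^1$ and $\Hom$ spaces are as predicted).
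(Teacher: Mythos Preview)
Your plan for the \emph{non-projective} summands is essentially the paper's argument: apply $\Hom_\La(-,T)$ to the forward exchange sequence $0\to\tilde X\to\tilde T'\to\tilde Y\to 0$ to obtain a length-2 presentation of $S_{\tilde X}$ with kernel $\Hom_\La(\tilde Y,T)$. Rather than invoking Corollary~\ref{C:projdim} abstractly, the paper splices in the \emph{backward} exchange sequence $0\to\tilde Y\to\tilde T''\to{}_{\gamma^{-1}}\tilde X\to 0$ (Remark~\ref{R:twist}) to obtain the explicit minimal projective resolution
\[
0\to\Hom_\La({}_{\gamma^{-1}}\tilde X,T)\to\Hom_\La(\tilde T'',T)\to\Hom_\La(\tilde T',T)\to\Hom_\La(\tilde X,T)\to S_{\tilde X}\to 0.
\]
This explicit form immediately gives the lower bound you were worried about: applying $\Hom_E(-,S_{{}_{\gamma^{-1}}\tilde X})$ shows $\Ext^3_E(S_{\tilde X},S_{{}_{\gamma^{-1}}\tilde X})\cong\Hom_E(\Hom_\La({}_{\gamma^{-1}}\tilde X,T),S_{{}_{\gamma^{-1}}\tilde X})\neq 0$, so $\mathrm{pdim}\,S_{\tilde X}=3$. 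Your vaguer lower-bound sketch (``$\Hom_\La(\tilde Y,T)$ is not projective because $\tilde Y\notin\add(T)$'') can be made rigorous, but the explicit resolution is cleaner.

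The genuine gap in your proposal is the treatment of \emph{projective} $\tilde X$. Your exchange-sequence argument does not apply: Proposition~\ref{P:exchange pair} requires $\La\in\add(T/X)$, which fails once $X$ contains a projective summand. You acknowledge in passing that projectives ``can have shorter resolutions'', but you never prove this, and it is not a consequence of the machinery you have set up. The paper handles this case by a separate and somewhat delicate argument: set $Z=\tilde X/\mathrm{soc}(\tilde X)$, take a minimal left $\add(T/X)$-approximation $f\colon Z\to T'$, and show (i) $f$ is injective, and (ii) the cokernel $\bar Z$ lies in $\add(T)$. Point~(i) requires knowing that the injective hull of $Z$ has no summand in $\add(X)$, which in turn uses the explicit quiver of $\La$ (no arrow between a vertex and its $\gamma$-orbit); point~(ii) uses that $Z$ is rigid (since $\Omega Z$ is simple) together with Lemma~\ref{L:mutation is rigid}. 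From this one obtains $\mathrm{pdim}_E S_{\tilde X}\le 2$. This argument is not ``mechanical transcription'' of the exchange-sequence method and must be supplied.
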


\begin{proof}
	Consider an indecomposable module $\tilde{X}$ in $\add(T)$. Assume first that $\tilde{X}$ is non-projective. Let $X$ be the minimal $\gamma$-equivariant summand of $T$ having $\tilde{X}$ as a summand:
	\[
		X = \bigoplus_{i=0}^{|X|-1}{}_{\gamma^i}\tilde{X}.
	\]
	Consider the exchange pair $(X,Y)$ with base $(\tilde{X},\tilde{Y})$ associated to $T/X$. Since $\tilde{X}$ is non-projective, the module $X$ has no projective summands, and since $T$ is maximal rigid, we have $\La \in \add(T/X)$.
	
	By the discussions in Remark \ref{R:twist}, based on Propositions \ref{P:exchange pair} and \ref{P:exchange backwards}, we have short exact sequences
	\[
		\xymatrix{0 \ar[r] & \tilde{X} \ar[r]^-{\tilde{f}} & \tilde{T}' \ar[r] & \tilde{Y} \ar[r] & 0}
	\]
	and
	\[
		\xymatrix{0 \ar[r] & \tilde{Y} \ar[r]& \tilde{T}'' \ar[r] & {}_{\gamma^{-1}}\tilde{X} \ar[r] & 0}
	\]
with $\tilde{T}'$ and $\tilde{T}''$ in $\add(T/X)$. Applying $\Hom_\Lambda(-,T)$ we obtain sequences
	\[
		0 \to \Hom_\La(\tilde{Y},T) \to \Hom_\La(\tilde{T}',T)\xrightarrow{\Hom_\La(\tilde{f},T)} \Hom_\La(\tilde{X},T) \to \Ext_\La^1(\tilde{Y},T) \to 0,
	\]
	where $\Ext_\La^1(\tilde{Y},T) \cong \Ext_\La^1(\tilde{Y},\tilde{X})$ by Lemma \ref{L:no loops}, and
	\[
		0 \to \Hom_\La({}_{\gamma^{-1}}\tilde{X},T) \to \Hom_\La(\tilde{T}'', T)\to \Hom_\La(\tilde{Y},T) \to \Ext_\La^1({}_{\gamma^{-1}} \tilde{X},T) = 0.
	\]
The cokernel of $\Hom_\La(\tilde{f},T)$, i.e. $\Ext^1_\La(\tilde{Y},\tilde{X})$, is one-dimensional by Lemma \ref{L:no loops}. Therefore it is isomorphic to $S_{\tilde{X}}$, as $\Hom_\La(\tilde{X},T)$ is the projective of $E$ associated to $\tilde{X}$. Combining the two sequences yields an exact sequence
\begin{align}\label{S:resolution of SX}
	0 \to \Hom_\La({}_{\gamma^{-1}}\tilde{X},T) \to \Hom_\La(\tilde{T}'',T) \to \Hom_\La(\tilde{T}',T) \to \Hom_\La(\tilde{X},T) \to S_{\tilde{X}} \to 0.
\end{align}
This is a projective resolution of $S_{\tilde{X}}$. Applying $\Hom_E(-,S_{({}_{\gamma^{-1}} \tilde{X})})$ and using elementary homological algebra yields
\[
\Ext^3_E(S_{\tilde{X}}, S_{({}_{\gamma^{-1}} \tilde{X})}) \cong \Hom_E(\Hom_{\La}(_{\gamma^{-1}}\wt{X}, T), S_{({}_{\gamma^{-1}} \tilde{X})}) \neq 0,
\]
and hence $\mathrm{pdim} S_{\tilde{X}} = 3$.

Next assume that $\wt{X}$ is projective, let $Z:= \wt{X}/S$ where $S$ is the (simple) socle of $\wt{X}$. 
We will show (below) that
\begin{enumerate}
\item There is a short exact sequence 
\[
	0\to Z \xrightarrow{f} T'\to \bar{Z}\to 0
\]
where %$T'\in {\rm add}(T/X)$ and 
$f$ is  a minimal left add$(T/X)$ approximation;
\item $\bar{Z}$ is in $\add(T)$. 
\end{enumerate}
Assume these for the moment, then we let $h = f\circ \pi$ where $\pi: \wt{X}\to Z$ is the canonical epimorphism. 
We apply the functor $(-, T) = {\rm Hom}_{\La}(-, T)$ to the exact sequence
\[
	\wt{X} \xrightarrow{h} T'\to \bar{Z}\to 0.
\]
This gives an exact sequence of $E$-modules,
\[
	0\to (\bar{Z}, T)\to (T', T) \xrightarrow{\Hom(h,T)} (\wt{X}, T)\to W\to 0
\]
By (1) and (2), this is a projective resolution of the $E$-module $W$. It is now enough to show
that $\dim W =1$,  so that  $W= S_{\wt{X}}$ and hence $\mathrm{pdim}_E(S_{\wt{X}})\leq 2$.

We show that any radical map in $(\wt{X}, T)$ factors through $h$. 
Take $\psi: \wt{X}\to T''$ where $T''$ is some indecomposable summand of $T$, and where $\psi$ is not an isomorphism. 
Note that $\wt{X}$ is also an injective module, so  if $\psi$ were injective then it would split and $\wt{X}\cong T''$, which contradicts
the assumption.  It follows that $\psi(S)=0$ (since $S$ is the simple socle of $\wt{X}$).
Therefore there is $\bar{\psi}: Z\to T''$ such that $\psi = \bar{\psi}\circ \pi$. Now we use that
$f$ is the left approximation, which implies that $\bar{\psi}$ factors through $f$, say $\bar{\psi} = \eta\circ f$.  Combining these shows that
$\psi$ factors through $h$, as required.

It remains to prove (1) and (2). As before, let $X$ be the minimal $\gamma$-equivariant summand of $T$ which has summand $\wt{X}$.
Now let $f: Z\to T'$ be the minimal add$(T/X)$ approximation, we show that this is injective: We must show that the injective hull of $Z$ is in add$(T/X)$, i.e.\ does not have a summand in $\add(X)$. 
Suppose this is false, then $Z$ has a simple submodule $L$ isomorphic to the socle of $_{\gamma^j}\wt{X}$ for some $j$ which is $L\cong _{\gamma^j}S$. 
Then $\wt{X}$ has a submodule of length two with composition factors $S$ and $_{\gamma^j}S$. That is, there is an arrow in the quiver of $\La$ between the corresponding vertices.
From the presentation of $\La$ (see for example \cite{ES}), this is not the case and we have a contradiction.
%{\blue{maybe we can cut this short}}
This proves (1), setting $\bar{Z}$ to be the quotient $X/{\rm Im} (f)$.

We claim that $Z$ is rigid.  Indeed we have
\[
	{\rm Ext}^1_{\La}(Z, Z)\cong \  {\rm Ext}^1_{\La}(\Omega(Z), \Omega(Z)) =  {\rm Ext}^1_{\La}(S, S) = 0.
\]
By Lemma \ref{L:mutation is rigid} and (1),  we have that  $\bar{Z}\oplus T/X$ is rigid.
Now, $X$ is projective and therefore $\bar{Z}\oplus T/X\oplus X = \bar{Z}\oplus T$ is also rigid. 
But $T$ is maximal rigid, hence $\bar{Z}$ is in $\add(T)$.
\end{proof}

\section{Endomorphism algebras of cluster tilting modules for mesh algebras} \label{S:rep dimension}

We now return our focus on mesh algebras. Throughout this section let $\La$ denote a mesh algebra of non-simply laced Dynkin type, and as before let $\gamma$ be the automorphism of $\La$ described in Section \ref{S:gamma}. Note that in all cases but $P(\bG_2)$, any basic maximal rigid module is automatically $\gamma$-equivariant, and we could remove this assumption from the statements of the results in these cases.

\begin{thm}\label{T:maximal rigid = 2-ct}
	Let $T$ be a basic $\gamma$-equivariant maximal rigid module in $\modu \La$, and set $E = \End_\La(T)$. Then the following hold:
	\begin{enumerate}
		\item{The quiver of $E$ has no arrows from $\tilde{X}$ to ${}_{\gamma^i} \tilde{X}$ for any indecomposable $\tilde{X} \in \add(T)$.}
		\item{We have $\mathrm{gl.dim}(E) = 3$.}
		\item{The module $T$ is cluster tilting.}
		\item{We have $\mathrm{dom.dim}(E) = 3$.}
	\end{enumerate}
\end{thm}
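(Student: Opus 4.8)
The plan is to prove the four claims roughly in the order stated, since each feeds into the next.

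\textbf{Step (1): no $\gamma$-arrows.} Given a basic $\gamma$-equivariant maximal rigid module $T$, take an indecomposable non-projective summand $\tilde{X}$ and let $X = \bigoplus_{i=0}^{n} {}_{\gamma^i}\tilde{X}$ be the minimal $\gamma$-equivariant summand of $T$ containing it, so $T = (T/X) \oplus X$ with $\La \in \add(T/X)$. Form the pointed exchange pair $(X,Y)$ with base $(\tilde{X},\tilde{Y})$ associated to $T/X$ via Proposition \ref{P:exchange pair}. By the equivalence (1) $\Leftrightarrow$ (3) in Lemma \ref{L:no loops}, it suffices to check that $\dim\Ext^1_\La(\tilde{Y}, {}_{\gamma^i}\tilde{X})$ is $1$ for $i=0$ and $0$ otherwise. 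By Proposition \ref{P:Ext-symmetry} (applicable since $\tilde X$, being a summand of $\gamma$-equivariant $T$, has all its $\gamma$-twists accounted for — more precisely ${}_{\gamma^i}\tilde X \in \add(T)$), we may transfer this to a statement about $\Ext^1_\La({}_{\gamma^i}\tilde X, \tilde Y)$; the point is that the exchange sequence $S_i$ from Proposition \ref{P:exchange pair} already exhibits a non-split extension for $i=0$, while for $i \neq 0$ one uses that $T \oplus Y$ is rigid and $\gamma$-equivariant (part (c) of Proposition \ref{P:exchange pair}) together with the $\Ext^1$-symmetry to force vanishing. The projective case ($\tilde X$ projective, hence injective) is handled separately: an arrow $\tilde X \to {}_{\gamma^i}\tilde X$ would, as in the projective part of the proof of Proposition \ref{P:gldim}, correspond to a length-two submodule of $\tilde X$ with composition factors $S$ and ${}_{\gamma^i}S$, i.e.\ an arrow in the quiver of $\La$ between the corresponding vertices, which the presentation of $\La$ in \cite{ES} rules out.

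\textbf{Step (2): global dimension.} This is immediate: by (1) the hypothesis of Proposition \ref{P:gldim} is satisfied, so $\mathrm{gl.dim}(E) = 3$.

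\textbf{Step (3): cluster tilting.} Having (2), $T$ is a basic maximal rigid module whose endomorphism algebra has global dimension $3$. The standard route (as in \cite[Section~6]{GLS-3}, ultimately Iyama's characterisation, cf.\ \cite{I}) is: a maximal rigid module $T$ over a self-injective algebra with $\mathrm{gl.dim}\End(T) \leq 3$ is $2$-cluster tilting. Concretely, one shows $\add(T) = \{Z \mid \Ext^1_\La(T,Z)=0\} = \{Z \mid \Ext^1_\La(Z,T)=0\}$; the two sets coincide by the $\Ext^1$-symmetry (Corollary \ref{C:Ext-symmetry}/Proposition \ref{P:Ext-symmetry} applied to the $\gamma$-equivariant $T$), and the inclusion of the right-hand set into $\add(T)$ follows from maximal rigidity, while the reverse needs the global dimension bound via a resolution argument: if $\Ext^1_\La(T,Z)=0$ one produces using Corollary \ref{C:ses} a suitable $\add(T)$-resolution of $Z$ and uses $\mathrm{gl.dim}(E)\le 3$ to conclude $Z \in \add(T)$. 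Functorial finiteness of $\add(T)$ is automatic as $T$ has finitely many indecomposable summands over a finite-dimensional algebra.

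\textbf{Step (4): dominant dimension.} Since $\La$ is self-injective, $E = \End_\La(T)$ with $T$ a generator-cogenerator (it contains $\La$ and, being $2$-cluster tilting over a self-injective algebra, also $D\La$ — equivalently $T$ is $\gamma$-equivariant and contains all projectives, hence all injectives). For such an endomorphism algebra the dominant dimension is controlled by the Ext-vanishing $\Ext^i_\La(T,T)=0$: a classical result (Müller's theorem, cf.\ also \cite{I}) gives $\mathrm{dom.dim}(E) \geq n+2$ where $n$ is the largest integer with $\Ext^i_\La(T,T)=0$ for $1 \leq i \leq n$. Here $T$ rigid gives $n \geq 1$, and using $\Omega^3_{\La^e}(\La) \cong {}_\mu\La_{\mathrm{id}}$, one computes $\Ext^2_\La(T,T)$: by the symmetry $D\Ext^1_\La(T, \Omega^{-1}T) \cong \Ext^1_\La(\Omega^{-1}T, {}_\gamma T)$ and periodicity these vanish as well (this is where one must be slightly careful, combining rigidity with the twist), so $n \geq 2$ and $\mathrm{dom.dim}(E) \geq 4$? — one then pins it down to exactly $3$ using $\mathrm{gl.dim}(E)=3$, since $\mathrm{dom.dim}(E) \leq \mathrm{gl.dim}(E)$ when the latter is finite, forcing $\mathrm{dom.dim}(E)=3$ once we know it is $\geq 3$.

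\textbf{Main obstacle.} The crux is Step (1): verifying the precise $\Ext^1$-dimension pattern $\dim\Ext^1_\La(\tilde Y,{}_{\gamma^i}\tilde X)=\delta_{i,0}$, i.e.\ that the exchange sequence contributes exactly a one-dimensional extension in the base direction and nothing in the twisted directions. This is where the interplay between the $\gamma$-twist, minimality of the $\add(T)$-approximations, and the decomposition of the exchange sequence in Proposition \ref{P:exchange pair} all have to be used in concert; everything downstream (2)–(4) is then fairly mechanical given the tools already assembled (Proposition \ref{P:gldim}, Corollaries \ref{C:ses}, \ref{C:projdim}, \ref{C:Ext-symmetry}, and standard tilting/dominant-dimension machinery).
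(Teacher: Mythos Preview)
Your Step (1) has a genuine gap, and this is where the whole argument founders. You aim to verify $\dim\Ext^1_\La(\tilde Y,{}_{\gamma^i}\tilde X)=\delta_{i,0}$ directly, but for $i\neq 0$ the module ${}_{\gamma^i}\tilde X$ is a summand of $X$, not of $T/X$ or of $(T/X)\oplus Y$, so the rigidity of $(T/X)\oplus Y$ from Proposition \ref{P:exchange pair}(c) says nothing about $\Ext^1_\La(\tilde Y,{}_{\gamma^i}\tilde X)$. The $\Ext^1$-symmetry only transforms this into $\Ext^1_\La({}_{\gamma^i}\tilde X,{}_\gamma\tilde Y)$, which is no more accessible. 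And for $i=0$ the exchange sequence shows this Ext-space is non-zero, but not that it is one-dimensional. Since Lemma \ref{L:no loops} merely records an equivalence, it gives no independent leverage here. You flag this yourself as the ``main obstacle'' without actually overcoming it.

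The paper's route is entirely different and supplies the missing idea: it bootstraps from the \emph{existence} of the start module $I_\La$ (Theorem \ref{thm:La_2_ct}), which is already known to be cluster tilting. By Theorem \ref{T:tilting module}, $E=\End_\La(T)$ is derived equivalent to $\End_\La(I_\La)$, and the latter has global dimension $\leq 3$ by \cite[Theorem~0.2]{I}; derived equivalence then transports finite global dimension to $E$. Igusa's theorem \cite[Theorem~3.2]{Igusa} now yields (1) from $\mathrm{gl.dim}(E)<\infty$. After that, (2) follows from Proposition \ref{P:gldim}, (3) from \cite[Theorem~5.1(3)]{I}, and (4) from \cite[Theorem~0.2]{I} together with $\mathrm{gl.dim}(E)=3$. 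Your Step (4) is also muddled --- the claimed vanishing of $\Ext^2_\La(T,T)$ is false and would force $\mathrm{dom.dim}(E)\geq 4$, contradicting the conclusion --- but this is secondary to the gap in Step (1).
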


Before we provide the Proof of Theorem \ref{T:maximal rigid = 2-ct}, let us recall some facts from Sections \ref{S:Preliminaries} and \ref{S:Existence}. In the proof of Theorem \ref{T:maximal rigid = 2-ct} we will consider the module
	\[
		I_\La = \tilde{F}_*(\bigoplus_{x \in A_\Delta}I(x))
	\]
	in $\modu \La$ from the proof of Theorem \ref{thm:La_2_ct}, where $\tilde{F}_* \colon \modu \cC \to \modu \La$ as usual denotes the push-down of the covering functor. Recall that $I(x)$ are the indecomposable injective $\Gamma_\Delta$-modules, viewed as $\cC$-modules, and
	 that $I_\La$ is the push-down of the pull-up of the start module $I_\Delta$ in $P(\Delta)$. It is a cluster tilting module, and we think of it as our ``start module'' in $\modu \La$. Note that, in particular, our start module $I_\La$ in $\modu \La$ is basic $\gamma$-equivariant maximal rigid.

\begin{proof}
	By Theorem \ref{T:tilting module}, the endomorphism algebra $\End_\La(T)$ is derived equivalent to $\End_\La(I_\La)$, the endomorphism algebra of our start module $I_\La$ in $\modu \La$ (for $P(\bG_2)$ keep in mind Remark \ref{R:generalise}). Now, since $I_\La$ is cluster tilting we have by \cite[Theorem~0.2]{I} that
	\[
		\mathrm{gl.dim} \End_\La(I_\La) \leq 3 < \infty.
	\]
This implies that $\End_\La(T)$ has finite global dimension. Therefore, by Igusa's work on the automorphism conjecture \cite[Theorem~3.2]{Igusa}, the quiver of $\End_\La(T)$ has no arrows from $\tilde{X}$ to ${}_{\gamma^i} \tilde{X}$ for every indecomposable module $\tilde{X} \in \add(T)$ and all $i \in \bZ$. Thus, by Proposition \ref{P:gldim} we have $\mathrm{gl.dim}(\End_\La(T)) = 3$. So $T$ is cluster tilting by \cite[Theorem~5.1(3)]{I}. Again by \cite[Theorem~0.2]{I} we get that $\mathrm{dom.dim(\End_\La(T))} = 3$, since $\mathrm{gl.dim}\La = 3$ also implies $\mathrm{dom.dim}(E) \leq 3$.
	
\end{proof}

\begin{corollary}
	The algebra $\La$ has representation dimension $\leq 3$. In particular, if it is of infinite type, it has representation dimension $3$.
\end{corollary}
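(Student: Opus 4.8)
The final statement to prove is the Corollary: \emph{The algebra $\La$ has representation dimension $\leq 3$; in particular, if it is of infinite type, it has representation dimension $3$.}

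The plan is to read off the bound directly from Theorem \ref{T:maximal rigid = 2-ct}. First I would invoke Theorem \ref{thm:La_2_ct} to obtain a cluster tilting module for $\La$; concretely, one may take the start module $I_\La = \tilde F_*(\bigoplus_{x\in A_\Delta} I(x))$, which by the discussion preceding the proof of Theorem \ref{T:maximal rigid = 2-ct} is basic, $\gamma$-equivariant and maximal rigid. (For the single outstanding case $P(\bG_2)$ one uses Remark \ref{R:generalise}, noting that this module is still a $\gamma$-equivariant maximal rigid module there.) Applying Theorem \ref{T:maximal rigid = 2-ct} to $T = I_\La$ gives that $E = \End_\La(I_\La)$ satisfies $\mathrm{gl.dim}(E) = 3$ and $\mathrm{dom.dim}(E) = 3$, so in particular $E$ is an algebra of global dimension $3$ with dominant dimension $\geq 2$.

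Next I would recall the characterisation of representation dimension via these endomorphism algebras. By Auslander's definition, $\mathrm{rep.dim}(\La)$ is the infimum of $\mathrm{gl.dim}(\End_\La(M))$ over generator-cogenerators $M$ of $\modu\La$; since $\La$ is self-injective, $\La \in \add(M)$ already makes $M$ a generator-cogenerator, and $I_\La$ contains $\La$ as a summand (indeed $\La \in \add(I_\La)$, as used throughout Section \ref{S:Mutation}). Hence $I_\La$ is a generator-cogenerator and $\mathrm{rep.dim}(\La) \leq \mathrm{gl.dim}(\End_\La(I_\La)) = 3$. This gives the first assertion. Alternatively one can cite the general fact of Iyama that the existence of a cluster tilting module forces $\mathrm{rep.dim} \leq 3$, together with Theorem \ref{T:maximal rigid = 2-ct}(3), but spelling it out via $E$ keeps the argument self-contained.

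For the second assertion, I would use Auslander's theorem that $\mathrm{rep.dim}(\La) \leq 2$ if and only if $\La$ is of finite representation type (mentioned in the introduction). Thus if $\La$ is of infinite type then $\mathrm{rep.dim}(\La) \geq 3$, and combined with the upper bound just established we get equality $\mathrm{rep.dim}(\La) = 3$. There is no real obstacle here: the entire content of the corollary is already packaged in Theorem \ref{T:maximal rigid = 2-ct}, and the only mild point to be careful about is verifying that the cluster tilting module we use genuinely has $\La$ as a direct summand so that it is a legitimate test module for representation dimension — which is immediate from the construction of $I_\La$ and its role in the preceding sections.
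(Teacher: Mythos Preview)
Your argument is correct and is essentially the approach the paper has in mind: the corollary is stated without proof in the paper, so it is meant to follow immediately from Theorem \ref{T:maximal rigid = 2-ct} together with Auslander's characterisation of representation dimension, exactly as you spell out. The only thing to note is that the fact $\La \in \add(I_\La)$ (or indeed $\La \in \add(T)$ for any cluster tilting $T$) is automatic for a self-injective algebra, since projective-injectives always lie in any cluster tilting subcategory; this is implicit in the paper rather than argued in Section \ref{S:Mutation}.
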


\begin{corollary}\label{C:maximal rigid = 2-ct}
	Assume $\La$ is not of type $P(\bG_2)$. Then a basic module $T$ in $\modu \La$ is cluster tilting if and only if it is maximal rigid.
\end{corollary}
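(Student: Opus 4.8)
The plan is to deduce Corollary \ref{C:maximal rigid = 2-ct} directly from the machinery already assembled. One implication is free: every $2$-cluster tilting module is maximal rigid, as observed in Section \ref{subsect:2tilt}. So the content is the converse, and here I would split according to whether $\La$ is $P(\bG_2)$ or not. Since the corollary explicitly excludes $P(\bG_2)$, we are in the range where the automorphism $\gamma$ has order at most $2$ (Lemma \ref{L:order 2}), and hence by Lemma \ref{L:maximal rigids are invariant} \emph{every} basic maximal rigid module $T$ in $\modu\La$ is automatically $\gamma$-equivariant.

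The key step is then simply to invoke Theorem \ref{T:maximal rigid = 2-ct}(3). That theorem takes as input a basic $\gamma$-equivariant maximal rigid module and concludes it is cluster tilting; by the previous paragraph, for $\La \neq P(\bG_2)$ the $\gamma$-equivariance hypothesis is vacuous, so every basic maximal rigid module is cluster tilting. Combined with the trivial implication, this gives the stated equivalence. Concretely I would write: by Lemma \ref{L:maximal rigids are invariant} (applicable since $\gamma^2 = \mathrm{id}$ by Lemma \ref{L:order 2}, as $\La$ is a mesh algebra of Dynkin type other than $P(\bG_2)$), any basic maximal rigid $T$ is $\gamma$-equivariant, so Theorem \ref{T:maximal rigid = 2-ct} applies and shows $T$ is cluster tilting; the reverse implication is the remark following Definition \ref{D:2-ct}.

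Honestly, there is no real obstacle here — the corollary is a bookkeeping consequence of results proved earlier in the paper, and the only thing to be careful about is that Theorem \ref{T:maximal rigid = 2-ct} is stated for a $\gamma$-equivariant maximal rigid module over a general self-injective $A$ with $\Omega^3_{A^e}(A) \cong {}_\mu A_{\mathrm{id}}$, so one must confirm that the mesh algebras in question satisfy this syzygy condition — but that is exactly Section \ref{S:third syzygy} / Proposition \ref{P:Corollary AS}. So the proof is three lines.

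\begin{proof}
	By the remark following Definition \ref{D:2-ct}, any basic cluster tilting module in $\modu\La$ is maximal rigid. Conversely, let $T$ be a basic maximal rigid module in $\modu\La$. Since $\La$ is a mesh algebra of non-simply laced Dynkin type and $\La \neq P(\bG_2)$, Lemma \ref{L:order 2} gives $\gamma^2 = \mathrm{id}$, so by Lemma \ref{L:maximal rigids are invariant} the module $T$ is $\gamma$-equivariant. Hence Theorem \ref{T:maximal rigid = 2-ct}(3) applies, and $T$ is cluster tilting.
\end{proof}
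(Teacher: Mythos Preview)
Your proposal is correct and matches the paper's intended argument: the corollary is stated without an explicit proof because it is immediate from Theorem \ref{T:maximal rigid = 2-ct}(3) together with the observation (made at the start of Section \ref{S:rep dimension}) that for $\La \neq P(\bG_2)$ every basic maximal rigid module is automatically $\gamma$-equivariant. The paper's subsequent remark also notes an alternative route via Lemma \ref{L:maximal rigids are invariant} and \cite[Corollary~2.15]{YZZ}, but your derivation is the direct one the authors have in mind.
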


\begin{remark}
	Alternatively, Corollary \ref{C:maximal rigid = 2-ct} also follows from Lemma \ref{L:maximal rigids are invariant} together with \cite[Corollary~2.15]{YZZ}.
\end{remark}

\begin{proposition}\label{P:number of orbits}
Let $T$ be a basic cluster tilting module of $\La$. 
Then the number of minimal $\gamma$-equivariant summands 
         of $T$ is the number
         of positive roots of the corresponding root system, cf.\ Table \ref{tab:positive roots}.
\end{proposition}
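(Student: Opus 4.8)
The plan is to count the number of minimal $\gamma$-equivariant summands of a basic cluster tilting module $T$ by comparing it with the start module $I_\La$, whose structure we understand explicitly via the covering $F_* \colon \modu \cC \to \modu P(\Delta)$ and $\tilde F_* \colon \modu \cC \to \modu \La$. First I would recall that, by Theorem \ref{T:maximal rigid = 2-ct}, $\mathrm{gl.dim}(\End_\La(T)) = 3$, and that for any two basic cluster tilting modules $T_1, T_2$ the endomorphism algebras are derived equivalent by Theorem \ref{T:tilting module}; in particular they have the same number of simple modules, so $|T_1| = |T_2|$. Hence it suffices to compute the number of minimal $\gamma$-equivariant summands for \emph{one} cluster tilting module, and I would take $T = I_\La$.

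The key point is a bijection between minimal $\gamma$-equivariant summands of $I_\La$ and $\wt\sigma$-orbits of indecomposable summands of $I_\La$ (using Lemma \ref{L:order 2}, so that $\gamma = \wt\sigma$ up to inner automorphism, whence $_\gamma M \cong {}_{\wt\sigma} M$ for all $M$). Now $I_\La = \tilde F_*(\bigoplus_{x \in A_\Delta} I(x))$, and by Remark \ref{R:move} the indecomposable summands of $I_\La$ correspond bijectively to the $\langle \sigma\tau\rangle$-orbits of indecomposable summands of $\bigoplus_{x \in A_\Delta} I(x)$ in $\modu \cC$, i.e.\ to $\langle\sigma\tau\rangle$-orbits of vertices $x \in A_\Delta$ (equivalently, to the indecomposable $P(\Delta)$-summands of $I_\Delta$, of which there are $|A_\Delta|$ = number of positive roots of $\Delta$, grouped into $\langle\wt\sigma\rangle$-orbits after passing to $\La$). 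The action of $\wt\sigma = \wt\tau^{-1}$ on the indecomposable summands of $I_\La$ lifts to the action of $\sigma$ (equivalently $\tau^{-1}$, modulo $\langle\sigma\tau\rangle$) on the $\langle\sigma\tau\rangle$-orbits of vertices of $A_\Delta$, which is precisely the induced action of $\sigma$ on the Auslander--Reiten quiver $A_\Delta \subset \bZ\Delta$, and this corresponds to the folding action on positive roots of $\Delta$. Counting the orbits of this folding action on the positive roots of $\Delta$ yields exactly the number of positive roots of the folded (non-simply laced) root system: this is the classical fact that $\bA_{2k-1} \rightsquigarrow \bB_k$, $\bD_{n+1} \rightsquigarrow \bC_n$, $\bD_4 \rightsquigarrow \bG_2$, $\bE_6 \rightsquigarrow \bF_4$, with the number of $\sigma$-orbits of positive roots of the unfolded system equal to the number of positive roots of the folded system (the long roots corresponding to fixed roots, the short roots to genuine orbits). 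I would verify this orbit count type-by-type, matching against Table \ref{tab:positive roots}.

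Finally I would assemble: the number of minimal $\gamma$-equivariant summands of $I_\La$ equals the number of $\wt\sigma$-orbits of indecomposable summands of $I_\La$, equals the number of $\sigma$-orbits of vertices of $A_\Delta$, equals the number of $\sigma$-orbits of positive roots of $\Delta$, equals the number of positive roots of the corresponding non-simply laced root system. Then, since $|T| = |I_\La|$ for any basic cluster tilting $T$ and the $\gamma$-equivariant structure partitions the indecomposable summands into minimal $\gamma$-equivariant pieces of size $1$ or $2$ (by the Remark following Definition \ref{D:minimal equivariant}, since $\gamma^2 = \mathrm{id}$), the count of minimal $\gamma$-equivariant summands is determined — but I should be careful here: $|T| = |I_\La|$ alone does \emph{not} force the same number of \emph{minimal $\gamma$-equivariant} summands, since the partition into size-$1$ and size-$2$ pieces could a priori differ.

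\textbf{The main obstacle} is precisely this last point: I need that the number of minimal $\gamma$-equivariant summands (not just the total number of indecomposable summands) is a derived invariant, or otherwise an invariant of cluster mutation. The cleanest route is to use Proposition \ref{P:unique} together with the mutation theory of Section \ref{S:Mutation}: mutation at a minimal $\gamma$-equivariant non-projective summand replaces it by another minimal $\gamma$-equivariant summand of the \emph{same} size (by Proposition \ref{P:exchange pair}(b), $Y$ is minimal $\gamma$-equivariant with $|Y| = |X|$), and by Proposition \ref{P:unique} any basic cluster tilting module is reachable from $I_\La$ by a sequence of such mutations (connectivity of the mutation graph; this may itself need the stable $2$-Calabi--Yau mutation connectivity result of Iyama--Yoshino via Remark \ref{R:IY mutation}, or a direct argument). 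Since each mutation step preserves both the total number of indecomposable summands and the number of size-$2$ (hence also size-$1$) minimal $\gamma$-equivariant summands, the count is constant along the mutation graph, and therefore equals its value at $I_\La$, which is the number of positive roots of the corresponding root system. I would need to check carefully that the mutation graph is connected in this setting and that projective summands (which are $\gamma$-equivariant, hence minimal $\gamma$-equivariant of size $1$ since $P(\Delta)$ has $\sigma$-stable... — actually the projectives may form $\sigma$-orbits too, so a projective-summand bookkeeping is needed) are handled consistently; this bookkeeping, rather than any deep idea, is where the real work lies.
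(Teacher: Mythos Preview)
Your computation for the start module $I_\La$ is essentially the paper's approach: both reduce to counting $\sigma$-orbits on the vertices of $A_\Delta$, and the paper does this type-by-type just as you propose.

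The genuine gap is exactly the one you flagged yourself. You propose to transfer the count from $I_\La$ to an arbitrary basic cluster tilting module $T$ via mutation, using that each mutation step preserves the sizes of the minimal $\gamma$-equivariant summands. But this requires connectivity of the mutation graph, and the paper states explicitly (in the paragraph immediately following this proposition) that it is an \emph{open question} whether every cluster tilting module of a mesh algebra is reachable from a fixed one by mutation. So your proposed route does not close.

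The paper bypasses this entirely: after establishing the count for $I_\La$, it invokes \cite[Corollary~2.15]{YZZ} directly to conclude that the number of minimal $\gamma$-equivariant summands is the same for \emph{every} cluster tilting module, without any appeal to mutation connectivity. (This is the same result cited in the remark after Corollary~\ref{C:maximal rigid = 2-ct}; it controls the $\mathbb{S}_2$-orbit structure of cluster tilting subcategories uniformly.) Your observation that $|T_1| = |T_2|$ via derived equivalence of endomorphism algebras is correct but, as you noted, insufficient on its own; the YZZ input is precisely what upgrades this to an equality of orbit counts.

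A minor side remark: your claim that minimal $\gamma$-equivariant pieces have size $1$ or $2$ relies on $\gamma^2 = \mathrm{id}$, which fails for $P(\bG_2)$ where $\gamma$ has order $3$; there the pieces have size $1$ or $3$. This does not affect the overall shape of the argument but would need adjusting in the case-by-case bookkeeping.
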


\begin{proof}
We show, case by case, that the claim is true for $T = I_\La$, our start module. It then follows by an observation by Yang, Zhang and Zhu \cite[Corollary~2.15]{YZZ} that the claim holds for every cluster tilting module.
Observe that $\gamma$ acts as $\sigma$ on the summands of our start module (cf.\ Lemma \ref{L:order 2}) so what we want to count is the number of $\sigma$-orbits on $A_\Delta$.
\begin{enumerate}
\item Assume $\La = P(\bB_k)$ for $k \geq 2$. Then we start with the quiver $\Delta$ of type $\bA_{2k-1}$ from Figure \ref{fig:orientation}, and form $A_\Delta$ which consists of $k$ 
copies of $\Delta$ suitably connected. The automorphism $\sigma$ fixes each copy of $\Delta$; on it fixes the central vertex and has $k-1$ orbits of length 2. In total, the automorphism $\sigma$ has $k \cdot (1+(k-1)) = k^2$ orbits.
\item Assume $\La = P(\bC_n)$ for $n \geq 3$. Start with the quiver $\Delta$ of type $\bD_{n+1}$ from Figure \ref{fig:orientation}. Then $A_\Delta$ consists of $n$ copies of $\Delta$ suitable connected. In each copy of $\Delta$, the automorphism $\sigma$ fixes $n-1$ vertices and has one orbit of length $2$. In total, it has $(1+(n-1))\cdot n = n^2$ orbits.
\item Assume $\La = P(\bF_4)$. Start with the quiver $\Delta$ of type $\bE_6$ from Figure \ref{fig:orientation}. Then $A_\Delta$ consists of six copies of $\Delta$ suitably connected. The automorphism $\sigma$ fixes each of these copies. On it, there are two fixed points and two orbits of length $2$, i.e.\ $4$ orbits. In total, there are $4 \cdot 6 = 24$ orbits.
\item Assume $\La = P(\bG_2)$. Start with the quiver $\Delta$ of type $\bD_4$ from Figure \ref{fig:orientation} (setting $n=3$). Then $A_\Delta$ consists of $3$ copies of $\Delta$ suitably connected. On each copy, the automorphism $\sigma$ has one orbit of length $3$ and one fixed point. In total, there are $3 \cdot (1+1) = 6$ orbits.
\end{enumerate}
\end{proof}

\begin{table}[]
\begin{tabular}{|l|l|l|l|l|l|l|l|}
\hline
Dynkin type &  $\bA_n$  & $\bB_k$ & $\bC_n$ & $\bD_n$  & $\bE_6 $& $\bF_4$  & $\bG_2$  \\ \hline
Number of positive roots& $\frac{1}{2}(n^2+n)$ & $k^2$  & $n^2$  & $n(n-1)$  & $36$ & $24$ & $6$ \\ \hline
\end{tabular}
\caption{Number of positive roots for the Dynkin types} \label{tab:positive roots}
\end{table}

As for preprojective algebras (cf.\ \cite[Conjecture~6.10]{GLS-3}), it is an open question whether any cluster tilting module of a mesh algebra is reachable from a fixed cluster tilting module. If this is the case, all our results on mutation in Section \ref{S:Mutation} go through unchanged for $P(\bG_2)$, since $\gamma$-equivariance is preserved under mutation. 

\section{Matrix mutation}\label{S:matrix mutation}

Throughout this section, let $\La$ be a finite dimensional self-injective algebra with automorphism $\mu$ such that 
$\Omega^3_{\La^e}(\La) \cong {}_{\mu} \La _{\mathrm{id}}$ and Nakayama automorphism $\eta$. As before, we write $\gamma = \eta^{-1} \circ \mu$. Let $T$ be a $\gamma$-equivariant basic cluster tilting module. We denote by $\min_\gamma(T)$ the set of minimal $\gamma$-equivariant summands of $T$.

As we have seen, we can mutate $T$ at any non-projective minimal $\gamma$-equivariant summand. Combinatorially, this suggests that rather than directly mutating the quiver of $\End_\La(T)$, respectively the adjacency matrix thereof, we want to mutate the matrix we obtain by taking the quotient by the automorphism group generated by $\gamma$. 

Suppose $T^*$ is obtained from $T$ by such a mutation, we will now show that the quivers of ${\rm End}(T)$ and of ${\rm End}(T^*)$ are related by a Fomin-Zelevinsky type mutation. This modifies  the work from \cite[Section~7]{GLS-3} to the skew-symmetrisable setting. Since, after taking quotients by the automorphism group generated by $\gamma$, we do no longer deal with skew-symmetric matrices, we must make the assumption on the cluster tilting modules to be {\em admissible} (formulated in \ref{D:admissible}). The start module of $\modu(\La)$ satisfies this condition, as do all $\gamma$-equivariant cluster tilting modules in $P(\bG_2)$ and $P(\bC_n)$ for $n \geq 3$ (cf.\ Section \ref{S:admissible}). From examples, it looks like cluster tilting modules related to the start module by mutation might be admissible for all mesh algebras $\La$. Note however, that we cannot expect from the general theory of matrix mutation that this always must hold, cf.\ for example Dupont's counterexample \cite[Remark~2.18]{Dupont-ArXiv}.

\begin{remark}
	Considering $\gamma$-equivariant modules over $\La$ amounts to considering $\sigma$-equivariant modules in a suitable preprojective algebra $A$ (where $\sigma$ is the automorphism on $A$ induced by the automorphism $\sigma$, as described in Section \ref{S:list}, on its Galois-cover $\cC$). We refer the reader to Demonet's work \cite{Demonet} for an approach to the combinatorics of skew-symmetrisable cluster algebras via $\sigma$-equivariant modules in $\modu \mathbb{K}A$. 
	
	In contrast, here we study mutations directly in the module category of the mesh algebras. In particular, this offers a concrete example of a natural class of finite diensional algebras, outside of a 2-Calabi Yau setting, with a mutation theory encoding the combinatorics of skew-symmetrisable cluster algebras. Importantly, we do not need to pass to the setting of preprojective algebras of Dynkin type at any point, and rather are able to develop this as a standalone mutation theory for mesh algebras of Dynkin type.
\end{remark}

\subsection{Quotient matrix}

The theory of taking quotient matrices (i.e.\ folding) presented in the following is classical. For background reading with a view towards cluster algebras, see for example Dupont's work on non-simply laced cluster algebras \cite{Dupont-non-simply-laced}, and the more general \cite{Dupont-ArXiv}.

\begin{definition}\label{D:matrix-gamma-action}
	Let $A = (\tilde{a}_{\tilde{M}\tilde{N}})_{\tilde{M},\tilde{N} \in \ind(T)}$ be a matrix with rows and columns labelled by $\ind(T)$. Then we say that $\gamma$ acts on $A$ if for all $\tilde{M},\tilde{N} \in \ind(T)$ we have $\tilde{a}_{{}_\gamma \tilde{M}{}_\gamma \tilde{N}} = \tilde{a}_{\tilde{M}\tilde{N}}$.
\end{definition}

\begin{definition}\label{D:quotient matrix}
	Let $\gamma$ act on the matrix $A = (a_{\tilde{M}\tilde{N}})_{\tilde{M},\tilde{N} \in \ind(T)}$. The {\em quotient matrix of $A$ by $\langle \gamma \rangle$} is the matrix
	$A/\langle \gamma \rangle = (a_{MN})_{M,N \in \min_\gamma(T)}$ with
	\[
		a_{MN} = \sum_{\tilde{M} \in \ind(M)} \tilde{a}_{\tilde{M}\tilde{N}},
	\]
	for some $\tilde{N} \in \ind(N)$.
\end{definition}

\begin{remark}
Note that the entry $a_{MN}$ of $A/\langle \gamma \rangle$ is independent of the choice of $\tilde{N}$ in Definition \ref{D:quotient matrix}. 
\end{remark}

\subsection{Matrix mutation}

We will relate the mutation of cluster tilting modules over $\La$ to the combinatorial process of matrix mutation, as introduced in the context of cluster algebras by Fomin and Zelevinsky \cite{Fomin-Zelevinsky-cluster-algebras-I}.

\begin{definition}[{\cite[Definition~4.2]{Fomin-Zelevinsky-cluster-algebras-I}}]
	Let $A = (a_{ij})_{1 \leq i,j \leq n}$ be a $(n \times n)$-matrix over $\mathbb{Z}$, and let $1 \leq k \leq n$. The mutation of $A$ in direction $k$ is defined to be the matrix $\mu_k(A) = (a'_{ij})_{1 \leq i,j \leq n}$ such that
	\[
		a'_{ij} = 	\begin{cases}
					-a_{ij} & \text{if $i =k$ or $j = k$} \\
					a_{ij} + \frac{|a_{ik}|a_{kj} + a_{ik}|a_{kj}|}{2} & \text{else}.
				\end{cases}
	\]
\end{definition}

If $A$ has only zeroes on its diagonal, we can express the mutation of $A$ in terms of matrix multiplication as follows. Denote by $I_n$ the $(n \times n)$ identity matrix, and, for $1 \leq i,j \leq n$, by $E_{ij}$ the matrix with $1$ in position $(i,j)$ and $0$s everywhere else. We set
\begin{align*}
	U_{A,k} & = 	I_n - 2E_{kk} + \sum_{a_{kj}\leq0}|a_{kj}|E_{kj}, 
	\\
	W_{A,k} & = I_n - 2E_{kk} + \sum_{a_{ik}\geq0}|a_{ik}|E_{ik}
\end{align*}
The matrices $U_{A,k} = (u_{ij})_{1 \leq i,j \leq n}$ and 
$W_{A,k}=(w_{ij})_{1 \leq i,j \leq n}$ have entries
\[
	u_{ij} = 	\begin{cases}
				-\delta_{ij} - a_{ij} & \text{if $i = k$ and $a_{ij} \leq 0$}\\
				\delta_{ij} & \text{else}
			\end{cases} \; \; 
	w_{ij} = 	\begin{cases}
				-\delta_{ij} + a_{ij} & \text{if $j = k$ and $a_{ji} \geq 0$}\\
				\delta_{ij} & \text{else}.
			\end{cases}	
\]

Note that if $A$ is skew-symmetric, then 
$U_{A,k} = W^t_{A,k}$, where for a matrix $B$, we denote by $B^t$ the transpose of $B$. The following description of matrix mutation is used by Berenstein, Fomin and Zelevinsky in the proof of \cite[Lemma~3.2]{Berenstein-Fomin-Zelevinsky}.

\begin{lemma}
\label{L:matrix mutation}
	Let $A = (a_{ij})_{1 \leq i,j \leq n}$ be an $(n \times n)$-matrix over $\mathbb{Z}$ with $a_{ii} = 0$ for all $1 \leq i \leq n$. Let $1 \leq k \leq n$ and set $U=U_{A,k}$ and $W=W_{A,k}$. Then we have
	\[
		\mu_k(A) = WAU.
	\]
\end{lemma}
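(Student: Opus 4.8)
The plan is to verify the identity $\mu_k(A) = WAU$ by direct computation of the matrix product $WAU$ entry by entry, comparing with the piecewise definition of $\mu_k(A)$. First I would compute $AU$, using that $U = U_{A,k} = I_n - 2E_{kk} + \sum_{a_{kj}\le 0}|a_{kj}|E_{kj}$. Multiplying $A$ on the right by $E_{kk}$ picks out the $k$-th column and places it back in column $k$, while multiplying by $E_{kj}$ moves the $k$-th column into column $j$. So $(AU)_{ij}$ for $j \ne k$ equals $a_{ij} + a_{ik}|a_{kj}|$ when $a_{kj} \le 0$ and $a_{ij}$ otherwise, which can be written uniformly as $a_{ij} + a_{ik}\max(0,-a_{kj}) = a_{ij} + a_{ik}\frac{|a_{kj}| - a_{kj}}{2}$; and $(AU)_{ik} = a_{ik} - 2a_{ik} = -a_{ik}$.

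Next I would multiply on the left by $W = W_{A,k} = I_n - 2E_{kk} + \sum_{a_{ik}\ge 0}|a_{ik}|E_{ik}$. Left multiplication by $E_{ik}$ moves the $k$-th row into row $i$. For $i \ne k$, the $i$-th row of $WAU$ is the $i$-th row of $AU$ plus (if $a_{ki} \ge 0$, i.e.\ with our matrix the relevant sign condition) a multiple of the $k$-th row of $AU$. I would then assemble $(WAU)_{ij}$ for $i,j \ne k$: the contribution from the identity part is $a_{ij} + a_{ik}\frac{|a_{kj}|-a_{kj}}{2}$, and the contribution from the $E_{ik}$ term involves $(AU)_{kj}$, which needs care — I should track what $(AU)_{kj}$ is (namely $a_{kj} + a_{kk}\frac{|a_{kj}|-a_{kj}}{2} = a_{kj}$ since $a_{kk}=0$). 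After combining terms and using $\frac{|a_{kj}|-a_{kj}}{2} + \frac{|a_{kj}|+a_{kj}}{2} = |a_{kj}|$ and the standard manipulation of $\frac{|a_{ik}|a_{kj} + a_{ik}|a_{kj}|}{2}$, the interior entries should match the Fomin–Zelevinsky formula. For the boundary entries ($i = k$ or $j = k$) I would check directly that $(WAU)_{kj} = -a_{kj}$ and $(WAU)_{ik} = -a_{ik}$, using $a_{kk} = 0$ to kill cross terms.

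The main obstacle I anticipate is bookkeeping the sign conventions carefully: the matrices $U_{A,k}$ and $W_{A,k}$ here are defined with sums over $a_{kj} \le 0$ and $a_{ik} \ge 0$ respectively (not transposes of each other, since $A$ need not be skew-symmetric after folding), and one must be scrupulous about which index runs over the summation and which sign of $a$ triggers the extra term, so that the cross term $(AU)_{kj}$ entering through the $E_{ik}$ piece of $W$ combines with the identity-part term to reproduce exactly $\frac{|a_{ik}|a_{kj} + a_{ik}|a_{kj}|}{2}$ rather than some sign-flipped variant. I would handle this by splitting into the cases $a_{ik} \ge 0$ versus $a_{ik} < 0$ and $a_{kj} \ge 0$ versus $a_{kj} < 0$ and checking all four, since in each case the formula $a_{ij} + \frac{|a_{ik}|a_{kj} + a_{ik}|a_{kj}|}{2}$ simplifies (it is nonzero only when $a_{ik}$ and $a_{kj}$ have the same sign), and confirming agreement there. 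Once the four cases check out and the boundary row/column are verified, the identity $\mu_k(A) = WAU$ follows, completing the proof; this is essentially the computation underlying \cite[Lemma~3.2]{Berenstein-Fomin-Zelevinsky}, adapted to the present sign conventions.
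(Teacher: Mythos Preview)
Your proposal is correct and follows essentially the same approach as the paper: a direct entry-by-entry computation of $WAU$, using the limited support of $U$ and $W$, followed by the same four-case sign analysis on $a_{ik}$ and $a_{kj}$ to match the Fomin--Zelevinsky formula. The only cosmetic difference is that you factor the computation as $W(AU)$ while the paper expands the triple sum at once.
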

Note that Lemma \ref{L:matrix mutation} in particular holds for skew-symmetrisable matrices.
We include a proof for the convenience of the reader, and to showcase that it holds in the generality stated.
\begin{proof}
	We compute $A' = WAU$ with entries $A' = (a'_{ij})_{1 \leq i,j \leq n}$. We have
$a'_{ij} = \sum_{1 \leq p,q \leq n}w_{ip}a_{pq}u_{qj}$.
	Note that $w_{ip} \neq 0$ implies $p = k$ or $p = i$, and symmetrically, $u_{qi} \neq 0$ implies $q = k$ or $q = i$, thus in the above sum, the index $p$ runs over $\{i,k\}$ and $q$ runs over $\{j,k\}$. Furthermore, since $A$ is assumed to have zeroes on the diagonal, we have $a_{kk} = 0$. It follows that
	\[
		a'_{ij} = \sum_{p \in \{i,k\}} \sum_{q \in \{j,k\}} w_{ip}a_{pq}u_{qj},
	\]
	and
	if $i = k$ and $j = k$, this yields
		$a'_{ij} = w_{kk}a_{kk}u_{kk} = 0 = -a_{kk} = -a_{ij}$.
	If $i = k$ and $j \neq k$, we obtain
		$a'_{ij} = 	 w_{kk}a_{kj}u_{jj} + w_{kk}a_{kk}u_{kj} = -a_{kj} = - a_{ij}$
	and symmetrically, if $i \neq k$ and $j = k$, we obtain
		$a'_{ij} = 	w_{ii}a_{ik}u_{kk} + w_{ik}a_{kk}u_{kk} = -a_{ik} = -a_{ij}$. 
	Finally, if $i \neq k$ and $j \neq k$, we have
	\[
		a'_{ij} = w_{ii}a_{ij}u_{jj} + w_{ii}a_{ik}u_{kj} + w_{ik}a_{kj}u_{jj} + w_{ik}a_{kk}u_{kj} = a_{ij} + a_{ik}u_{kj} + w_{ik}a_{kj}. 						
	\]
	A case distinction shows
	\begin{align*}
		a'_{ij} &= 	\begin{cases}
					a_{ij} + a_{ik}|a_{kj}| + |a_{ik}|a_{kj} = a_{ij}& \text{if $a_{kj} \leq 0$, $a_{ik} \geq 0$}\\
					a_{ij} + a_{ik}|a_{kj}|& \text{if $a_{kj} \leq 0$, $a_{ik} < 0$}\\
					a_{ij} + |a_{ik}|a_{kj} & \text{if $a_{kj} > 0$, $a_{ik} \geq 0$}\\
					a_{ij} & \text{if $a_{kj} > 0$, $a_{ik} < 0$}.
				\end{cases} &
			&= a_{ij} + \frac{|a_{ik}|a_{kj} + a_{ik}|a_{kj}|}{2}.
	\end{align*}	
	Comparing entry-wise, we observe that $A' = \mu_{k}(A)$, as desired.
\end{proof}

\subsection{The exchange matrix of $T$}

Consider the adjacency matrix $\tilde{B}_T$ of the quiver $\tilde{Q}_T$ of $E = \End_\La(T)$. We have $\tilde{B}_T = (\tilde{b}_{\tilde{M}\tilde{N}})_{\tilde{M},\tilde{N} \in \ind(T)}$ where
	\begin{align*}
	\tilde{b}_{\tilde{M}\tilde{N}} 	& =  \dim \Ext^1_E(S_{\tilde{N}},S_{\tilde{M}}) - \dim \Ext^1_E(S_{\tilde{M}}, S_{\tilde{N}}) \\
							& =  |\{\text{arrows $\tilde{N} \to \tilde{M}$ in $\tilde{Q}_T $}\}| -  |\{\text{arrows $\tilde{M} \to \tilde{N}$ in $\tilde{Q}_T $}\}|.
	\end{align*}
	Note that here we identify the vertices in the quiver $\tilde{Q}_T$ with the indecomposable summands of $T$.
Then $\gamma$ acts on $\tilde{B}_T$, and we call $B_T = \tilde{B}_T/\langle \gamma \rangle$ the {\em exchange matrix of $T$}. In line with the notation from \cite{GLS-3} we denote by $B^\circ_T$ its {\em principal part}, i.e.\ the submatrix of $B_T$ of columns and rows labelled by non-projective modules: 
\[
	B^\circ_T = (b_{MN})_{M,N \in \min_\gamma(T) \setminus \mathrm{proj}_\gamma(\La)},
\]
where $\mathrm{proj}_\gamma(\La)$ denotes the set of minimal $\gamma$-equivariant projective $\La$-modules.

\begin{remark}
	Note that by Item (1) of Theorem \ref{T:maximal rigid = 2-ct} the exchange  matrix $B_T$ of $T$, and its principal part $B^\circ_T$, have no non-zero entries on their diagonals, i.e.\ $b_{MM} = 0$ for all $M \in \min_\gamma(T)$.
\end{remark}

\begin{definition}\label{D:admissible}
	We say that the $\gamma$-equivariant cluster tilting module $T$ is {\em admissible} if for all %(non-projective would be enough)
	$\tilde{M},\tilde{N} \in \ind(T)$ and all $i \in \mathbb{Z}$ we have
	\[
		\tilde{b}_{\tilde{M}\tilde{N}} \geq 0 \Rightarrow \tilde{b}_{\tilde{M}{}_{\gamma^i}{\tilde{N}}} \geq 0, \; \text{and} \; \tilde{b}_{\tilde{M}\tilde{N}} \leq 0 \Rightarrow \tilde{b}_{\tilde{M}{}_{\gamma^i}{\tilde{N}}} \leq 0,
	\]
\end{definition}

\begin{remark}
	By \cite[Lemma~2.5]{Dupont-ArXiv} and Item (1) of Theorem \ref{T:maximal rigid = 2-ct}, if $T$ is an admissible $\gamma$-equivariant cluster tilting module, then $B_T$ is skew-symmetrisable.
\end{remark}

We are now ready to state the main result of this section.

\begin{thm}\label{T:mutation matrix}
	Let $T$ be an admissible $\gamma$-equivariant cluster tilting module
	and let $X \in \min_\gamma(T)$ be a non-projective minimal $\gamma$-equivariant summand of $T$. 
	Then
	\[
		B^\circ_{\mu_X(T)} = \mu_X(B^\circ_T).
	\]
\end{thm}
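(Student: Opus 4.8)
The plan is to mimic the strategy used by Gei\ss--Leclerc--Schr\"oer in \cite[Section~7]{GLS-3}, but carefully tracking the $\gamma$-equivariance and using the matrix-multiplication description of mutation from Lemma \ref{L:matrix mutation} to reduce everything to the skew-symmetrizable quotient level. First I would recall the exchange sequences attached to the mutation $\mu_X(T\oplus X) = T\oplus Y$: by Proposition \ref{P:exchange pair} there is a minimal left $\add(T)$-approximation sequence $0\to X\to T'\to Y\to 0$ decomposing as $\bigoplus_{i=0}^n({}_{\gamma^i}\tilde X \to {}_{\gamma^i}\tilde T' \to {}_{\gamma^i}\tilde Y)$, and by Corollary \ref{C:involutive} and Remark \ref{R:twist} the backwards exchange sequence $0\to Y\to M\to X\to 0$ decomposes with a shift by $\gamma$, namely $\bigoplus_{i=0}^n({}_{\gamma^{i+1}}\tilde Y\to {}_{\gamma^i}\tilde M\to {}_{\gamma^i}\tilde X)$. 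The first concrete task is to read off, for any indecomposable $\tilde N\in\add(T)$, the multiplicities of $\tilde N$ in $\tilde T'$ and in $\tilde M$ in terms of the entries $\tilde b_{\tilde X\tilde N}$ and $\tilde b_{\tilde N\tilde X}$ of $\tilde B_T$; since the approximations are minimal, these multiplicities are exactly given by positive/negative parts of the relevant $\Ext^1$-counts, using that there are no loops by Theorem \ref{T:maximal rigid = 2-ct}(1), and one must sum over the $\gamma$-orbit of $\tilde X$ to pass to the entries $b_{XN}$, $b_{NX}$ of the quotient matrix $B_T$.

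Next I would set up the change-of-basis matrices $U = U_{B^\circ_T, X}$ and $W = W_{B^\circ_T, X}$ and show that the quivers of $\End_\La(T)$ and of $\End_\La(\mu_X(T))$ restricted to non-projective vertices, after folding by $\langle\gamma\rangle$, are related by $B^\circ_{\mu_X(T)} = W B^\circ_T U$; combined with Lemma \ref{L:matrix mutation} this gives the theorem. The heart of this is a computation of $\Ext^1_{E^*}(S_{\tilde N}, S_{\tilde M})$ for $E^* = \End_\La(\mu_X(T))$, i.e.\ of arrows in the new quiver, by comparing the projective resolutions of simples over $E$ and over $E^*$. For vertices distinct from (the orbit of) $\tilde X$ one applies $\Hom_\La(-,T)$ and $\Hom_\La(-,\mu_X(T))$ to the exchange sequences and extracts the relation between arrow counts; the key identities one needs are $\dim\Hom_\La(\tilde Y,\tilde N) - \dim\Hom_\La(\tilde X,\tilde N)$ expressed via the approximation multiplicities, together with the $\Ext^1$-symmetry of Corollary \ref{C:Ext-symmetry} (and Proposition \ref{P:Ext-symmetry} for $\gamma$-equivariant modules) to handle both directions of arrows. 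The admissibility hypothesis enters precisely here: it guarantees that the sign of $\tilde b_{\tilde M\tilde N}$ does not change as $\tilde N$ runs over its $\gamma$-orbit, so that the folded entries $b_{MN}$ genuinely behave like the entries of a skew-symmetrizable matrix and the piecewise-linear mutation rule applied to $B^\circ_T$ matches the orbit-summed arrow counts; without it the sum over an orbit of mixed-sign contributions would not assemble into $\mu_X(B^\circ_T)$.

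I expect the main obstacle to be the bookkeeping around the $\gamma$-twist in the backwards exchange sequence. Because $\mu_Y$ reads the sequence $0\to Y\to M\to X\to 0$ with summands ${}_{\gamma^{i+1}}\tilde Y\to{}_{\gamma^i}\tilde M\to{}_{\gamma^i}\tilde X$ rather than the ``untwisted'' pairing, one has to be careful that the matrix $U$ built from $B^\circ_T$ and the matrix $W$ built from it are applied on the correct sides and that the folding by $\langle\gamma\rangle$ commutes with forming $U$ and $W$ --- this uses Definition \ref{D:matrix-gamma-action} and the fact that $\gamma$ acts on $\tilde B_T$, so that $U_{\tilde B_T, \tilde X}$ and $W_{\tilde B_T, \tilde X}$ descend to $U_{B^\circ_T, X}$, $W_{B^\circ_T, X}$. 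Once this compatibility is pinned down, the remaining computation is the same bilinear-form manipulation as in \cite[Section~7]{GLS-3}: one verifies $B^\circ_{\mu_X(T)} = W B^\circ_T U$ entry by entry, with the diagonal-freeness from Theorem \ref{T:maximal rigid = 2-ct}(1) ensuring Lemma \ref{L:matrix mutation} applies, and concludes $B^\circ_{\mu_X(T)} = \mu_X(B^\circ_T)$. A secondary subtlety worth isolating as a preliminary lemma is that mutation at $X$ does not change the set of projective summands of $T$ (so the ``non-projective'' labelling of rows and columns is the same for $T$ and $\mu_X(T)$), which follows since $\La\in\add(T)$ is untouched by the exchange sequence and $Y$ has no projective summands by Proposition \ref{P:exchange pair}.
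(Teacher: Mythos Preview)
Your overall architecture is right --- you want $B^\circ_{\mu_X(T)} = W B^\circ_T U$ and then invoke Lemma \ref{L:matrix mutation} --- but the route you sketch to that identity is not the one the paper takes, and as written it has a gap.

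The paper does \emph{not} attempt to compute $\Ext^1_{E^*}(S_{\tilde N},S_{\tilde M})$ directly. Instead it passes through the Cartan matrix and the Ringel form. Concretely: it shows that the folded Ringel matrix $R_T$ is the inverse of the folded transpose Cartan matrix $G_T$ (Lemma \ref{L:Ringel and Cartan}), establishes $B^\circ_T = R^\circ_T$ on principal parts via the $\Ext^{3-i}\!\cong\!\Ext^i$ duality of Proposition \ref{P:Ext3-i} (Lemma \ref{L:exchange and Ringel}), and --- this is the representation-theoretic core --- proves $G_{T^*} = U G_T W$ (Proposition \ref{P:mutation Cartan}) by reading off how $\dim\Hom_\La(-,-)$ changes along the exchange sequence, which is where admissibility is actually used. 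Inverting then gives $R_{T^*} = W R_T U$, and a block-matrix check (using that $U,W$ have zero off-diagonal entries in rows/columns indexed by projectives) passes to principal parts. The point is that $\dim\Hom$ is additive on short exact sequences, so the Cartan matrix transforms cleanly; arrow counts are not additive, which is why the detour through $R_T = G_T^{-1}$ is so effective.

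Your proposal instead aims straight at the arrow counts: you want to compute $\Ext^1_{E^*}$ between simples ``by comparing the projective resolutions of simples over $E$ and over $E^*$''. But you do not say how to obtain those resolutions over $E^*$, and the identities you write down --- differences of $\dim\Hom_\La(\tilde Y,\tilde N)$ and $\dim\Hom_\La(\tilde X,\tilde N)$ --- are Cartan-matrix data, not arrow counts. Without the Ringel-form bridge $B^\circ = R^\circ$ (which in turn rests on the length-$3$ projective resolution of simples from Proposition \ref{P:gldim} and the symmetry of Proposition \ref{P:Ext3-i}), there is no mechanism in your outline to convert those Hom-dimension identities into the entries of $B^\circ_{T^*}$. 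In short: you have correctly located the exchange sequences, the role of admissibility in folding, and the target identity, but you are missing the key lemma that $B^\circ_T$ agrees with the principal part of the (folded) Ringel form, together with the Cartan-matrix computation that makes the mutation of the latter tractable. This is also what \cite[Section~7]{GLS-3} actually does in the skew-symmetric case, so ``mimicking'' that section means going through $R_T$ and $C_T$, not around them.
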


While we do need the assumption for $T$ to be admissible in the statement of Theorem \ref{T:mutation matrix}, the majority of the theory presented below holds for all cluster tilting modules. In the following, we thus do not assume $T$ to be admissible, unless explicitly stated. Our approach for proving Theorem \ref{T:mutation matrix} is inspired by \cite[Section~7]{GLS-3}.

\subsection{Ringel form and Cartan matrix}

In order to prove Theorem \ref{T:mutation matrix}, rather than  with the matrix $B$, we work with the  closely related Ringel form of ${\rm End}(T)$. This form is controlled
by the Cartan matrix of the algebra, and the change of the Cartan matrix under mutation can be seen directly from exchange sequences. 

Let $A$ be any finite dimensional algebra of finite global dimension. We recall the definition and crucial properties of the Ringel form. 
Let $P_1, \ldots, P_n$ be a complete set for the isomorphism classes of indecomposable projective $A$-modules. The Ringel-form is the bilinear form
\[
	\langle -, - \rangle \colon \mathbb{Z}^n \times \mathbb{Z}^n \to \mathbb{Z},
\]
defined via
\[
	\langle \underline{\dim}(M), \underline{\dim}(N) \rangle = \sum_{i = 0}^\infty \dim \Ext^i_A(M,N),
\] 
for any $M,N \in \modu \La$. Its matrix is given by $\tilde{R} = (\tilde{r}_{ij})_{1 \leq i,j \leq n}$, where
\[
	\tilde{r}_{ij} = \langle S_i, S_j \rangle,
\]
where $S_i$ denotes the simple top of $P_i$. The Cartan matrix of $A$ is the matrix $\tilde{C} = (\tilde{c}_{ij})_{1 \leq i,j \leq n}$ with 
$\tilde{c}_{ij} = \dim \Hom_A(P_i, P_j)$, it is invertible for $A$ of finite global dimension. Since $A$ is assumed to have finite global dimension, a Lemma by Ringel \cite[Section~2.4]{Ringel-tame} shows that the matrix $\tilde{R}$ is the inverse transpose of the Cartan matrix of $A$,
\[
	\tilde{R} = \tilde{C}^{-t}.
\]

We now return to our setting, where the algebra is ${\rm End}(T)$. We denote by $\tilde{R}_T$ the matrix of the Ringel form of $\End_\La(T)$, and by $\tilde{C}_T$ its Cartan matrix. The entries of the Cartan matrix $\tilde{C}_T = (\tilde{c}_{\tilde{M}\tilde{N}})_{\tilde{M}, \tilde{N} \in \ind(T)}$ are given by
\[
	\tilde{c}_{\tilde{M}\tilde{N}} = \dim \Hom_{\End_\La(T)}(\Hom_\La(\tilde{M},T), \Hom_\La(\tilde{N},T)) = \dim \Hom_\La(\tilde{N},\tilde{M}).
\] 
The automorphism $\gamma$ acts on $\tilde{C}$, and we denote by $C = (c_{MN})_{M,N \in \min_\gamma(T)}$ the quotient matrix $C = \tilde{C}/\langle \gamma \rangle$ with entries given by
\[
	c_{MN} = \sum_{\tilde{M} \in \ind(M)} \dim \Hom_\La(\tilde{N},\tilde{M}) = \dim \Hom_\La(\tilde{N},M),
\]
for any $\tilde{N} \in \ind(N)$. Furthermore, the automorphism $\gamma$ acts on the matrix $\tilde{R}$, and we consider the quotient matrix $R_T = \tilde{R}_T/\langle \gamma \rangle$. Furthermore we set
\[
	\tilde{G}_T = \tilde{C}_T^{t}
\]
to be the transpose of $\tilde{C}_T$ by $\gamma$, and denote by $G_T = \tilde{G}_T/\langle \gamma \rangle$ its quotient by $\gamma$. 
\begin{lemma}\label{L:Ringel and Cartan}
	Let $n$ be the number of minimal $\gamma$-equivariant summands of $T$. We have 
	\[
		R_T G_T = I_n,
	\]
	where $I_n$ denotes the $(n \times n)$-identity matrix.
\end{lemma}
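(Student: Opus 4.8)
The plan is to lift the claimed identity to the level of the ambient matrices indexed by $\ind(T)$, and then to check that passing to quotient matrices by $\langle\gamma\rangle$ is compatible with matrix multiplication in this situation. First I would invoke Ringel's lemma recalled above: since $T$ is cluster tilting, $\End_\La(T)$ has finite global dimension, so $\tilde R_T = \tilde C_T^{-t}$; as $\tilde G_T = \tilde C_T^{t}$, this gives $\tilde R_T\tilde G_T = I$, the identity matrix with rows and columns labelled by $\ind(T)$. Both $\tilde R_T$ and $\tilde G_T$ are $\langle\gamma\rangle$-invariant in the sense of Definition \ref{D:matrix-gamma-action} (the transpose of a $\gamma$-invariant matrix is again $\gamma$-invariant). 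So the task reduces to the following elementary fact: whenever $\tilde A$ and $\tilde B$ are $\langle\gamma\rangle$-invariant matrices indexed by $\ind(T)$ with $\tilde A\tilde B = I$, then $(\tilde A/\langle\gamma\rangle)(\tilde B/\langle\gamma\rangle) = I_n$, where $n = |\min_\gamma(T)|$.

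To prove that fact I would fix an indecomposable representative $\tilde N_0$ of each $\gamma$-orbit $N$ and compute $\big((\tilde A/\langle\gamma\rangle)(\tilde B/\langle\gamma\rangle)\big)_{MN}$ directly. The key preliminary observation, immediate from $\gamma$-invariance, is that $\sum_{\tilde M\in M}\tilde a_{\tilde M\tilde L} = \sum_{\tilde M\in M}\tilde a_{\tilde M\tilde L_0}$ for every $\tilde L$ in the orbit $L$ --- writing $\tilde L = {}_{\gamma^j}\tilde L_0$ gives $\tilde a_{\tilde M\tilde L} = \tilde a_{{}_{\gamma^{-j}}\tilde M,\tilde L_0}$, and ${}_{\gamma^{-j}}(-)$ permutes $M$ --- equivalently, the entry $(\tilde A/\langle\gamma\rangle)_{ML}$ does not depend on the chosen representative of $L$. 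Using this to replace the fixed column $\tilde L_0$ by a running column $\tilde L\in L$ before summing, the product becomes $\sum_L\sum_{\tilde L\in L}\sum_{\tilde M\in M}\tilde a_{\tilde M\tilde L}\,\tilde b_{\tilde L\tilde N_0}$; since the orbits partition $\ind(T)$ this collapses to $\sum_{\tilde M\in M}(\tilde A\tilde B)_{\tilde M\tilde N_0} = \sum_{\tilde M\in M}\delta_{\tilde M\tilde N_0}$, which is $1$ precisely when $\tilde N_0\in M$, i.e.\ when $M = N$, and $0$ otherwise. Applying this with $\tilde A = \tilde R_T$ and $\tilde B = \tilde G_T$ yields $R_T G_T = I_n$.

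I expect the only real subtlety to be bookkeeping: the $\gamma$-orbits of indecomposable summands of $T$ need not all have the same cardinality --- fixed points occur alongside orbits of length $2$, and of length $3$ in the $\bG_2$ setting --- so one must be careful that the ``sum over the row-orbit $M$'' built into the definition of a quotient matrix recombines correctly with the full sum $\sum_{\tilde L\in\ind(T)}$ coming from matrix multiplication. The mechanism making everything fit is exactly the representative-independence of the quotient entries noted above, which is nothing but the $\langle\gamma\rangle$-invariance hypothesis; with that in hand the computation is forced, and there is no genuine difficulty beyond organising the indices.
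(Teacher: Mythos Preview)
Your proposal is correct and follows essentially the same approach as the paper: both use Ringel's identity $\tilde R_T = \tilde G_T^{-1}$ at the unfolded level and then pass to quotients via compatibility of $/\langle\gamma\rangle$ with matrix multiplication. The only organisational difference is that the paper isolates that compatibility as a separate, slightly more general lemma (Lemma~\ref{L:matrix multiplication}: if $\gamma$ acts on $\tilde A$, $\tilde B$ and $\tilde C=\tilde A\tilde B$, then the quotients satisfy $AB=C$), whereas you inline the special case $\tilde C=I$; your index-juggling argument is exactly the content of that lemma's proof.
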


\begin{proof}
	By \cite[Section~2.4]{Ringel-tame} we have $\tilde{R}_T = \tilde{C}_T^{-t} = \tilde{G}_T^{-1}$. The automorphism $\gamma$ acts on $\tilde{R}_T$, and on $\tilde{G}_T$, and trivially on their product $\tilde{R}_T\tilde{G}_T = I_n$. Therefore taking the quotient matrices, by Lemma \ref{L:matrix multiplication} below, we obtain
	\[
		R_TG_T = I_n.
	\]
\end{proof}

\begin{lemma}\label{L:matrix multiplication}
	Let $\gamma$ act on the square matrices $\tilde{A}, \tilde{B}$ and $\tilde{C} = \tilde{A}\tilde{B}$, with rows and columns labelled by $\ind(T)$. Then setting $A = \tilde{A}/\langle \gamma \rangle$, $B = \tilde{B}/\langle \gamma \rangle$ and $C = \tilde{C}/\langle \gamma \rangle$ we have
	\[
		AB = C.
	\]
\end{lemma}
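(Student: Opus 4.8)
The plan is to verify the identity $AB = C$ entrywise, using that $\gamma$ acts on all three matrices so that the quotient entries are well-defined in the sense of Definition \ref{D:quotient matrix}. First I would fix two minimal $\gamma$-equivariant summands $M, N \in \min_\gamma(T)$ and choose representatives $\tilde{M}_0 \in \ind(M)$ and $\tilde{N}_0 \in \ind(N)$. Writing $\langle\gamma\rangle$ for the (finite cyclic) group generated by $\gamma$, the indecomposable summands of $M$ are exactly the orbit $\{\,{}_{\gamma^a}\tilde{M}_0\,\}$ (with $a$ ranging over a set of coset representatives), and likewise for $N$. By definition of the quotient matrix,
\[
	(AB)_{MN} = \sum_{\tilde{M} \in \ind(M)} (\tilde{A}\tilde{B})_{\tilde{M}\tilde{N}_0}
		= \sum_{\tilde{M} \in \ind(M)} \; \sum_{\tilde{K} \in \ind(T)} \tilde{a}_{\tilde{M}\tilde{K}} \, \tilde{b}_{\tilde{K}\tilde{N}_0}.
\]

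Next I would organise the inner sum over $\tilde{K}\in\ind(T)$ according to $\gamma$-orbits: write $\ind(T)$ as a disjoint union of $\gamma$-orbits, one for each $L \in \min_\gamma(T)$, and split the double sum accordingly. Fixing an orbit corresponding to $L$ with representative $\tilde{L}_0$, the contribution is
\[
	\sum_{\tilde{M} \in \ind(M)} \; \sum_{\tilde{K} \in \ind(L)} \tilde{a}_{\tilde{M}\tilde{K}} \, \tilde{b}_{\tilde{K}\tilde{N}_0}.
\]
Now I use the $\gamma$-invariance of $\tilde{B}$ (Definition \ref{D:matrix-gamma-action}): for $\tilde{K} = {}_{\gamma^c}\tilde{L}_0$ we have $\tilde{b}_{\tilde{K}\tilde{N}_0} = \tilde{b}_{{}_{\gamma^c}\tilde{L}_0 \, \tilde{N}_0} = \tilde{b}_{\tilde{L}_0 \, {}_{\gamma^{-c}}\tilde{N}_0}$. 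Reindexing the sum over $\tilde M \in \ind(M)$ by the substitution $\tilde M \mapsto {}_{\gamma^c}\tilde M$ — a bijection of $\ind(M)$ onto itself, again because $M$ is $\gamma$-equivariant — and using $\gamma$-invariance of $\tilde A$ to rewrite $\tilde a_{{}_{\gamma^c}\tilde M \, {}_{\gamma^c}\tilde L_0} = \tilde a_{\tilde M \tilde L_0}$, one collapses the dependence on $c$ and obtains, summing over the $|\ind(L)|$ choices of $c$,
\[
	\sum_{\tilde K \in \ind(L)} \Bigl( \sum_{\tilde M \in \ind(M)} \tilde a_{\tilde M \tilde L_0} \Bigr) \tilde b_{\tilde L_0 \, \tilde N_0}
	= a_{ML}\, b_{LN},
\]
where the last step also uses that $b_{LN}$ is independent of the chosen representative of $L$ (the Remark following Definition \ref{D:quotient matrix}), so that the $|\ind(L)|$-fold repetition is exactly absorbed. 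Summing over all orbits $L \in \min_\gamma(T)$ gives $(AB)_{MN} = \sum_{L} a_{ML} b_{LN} = (AB)_{MN}$ as matrices of size $|\min_\gamma(T)|$, and comparing with $c_{MN} = \sum_{\tilde M \in \ind(M)} \tilde c_{\tilde M \tilde N_0}$ via $\tilde C = \tilde A\tilde B$ yields $AB = C$.

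The only genuinely delicate point is bookkeeping the orbit sizes: an orbit of $\langle\gamma\rangle$ on $\ind(T)$ may be shorter than $|\langle\gamma\rangle|$ (a summand can be $\gamma$-equivariant on its own), so one must be careful that the reindexing bijections and the "repetition count" match up. This is handled cleanly by working throughout with the definition of a minimal $\gamma$-equivariant module from Definition \ref{D:minimal equivariant}: if $|\ind(M)| = m$ then the stabiliser structure forces the orbit under $\langle\gamma\rangle$ to have size $m$, and the factor of $m$ coming from the $c$-summation over $\ind(L)$ is precisely cancelled by the fact that $a_{ML}$ already packages a sum of $|\ind(M)|$ terms while $b_{LN}$ is a single well-defined entry. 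I expect this index-matching to be the main (if modest) obstacle; everything else is a direct manipulation of finite sums.
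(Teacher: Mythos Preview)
Your overall strategy---an entrywise computation using $\gamma$-invariance and a reindexing of the orbit sum---is exactly what the paper does, only you run it in the opposite direction (from $\tilde C$ down to $A$ and $B$ rather than from $AB$ up to $C$). The substance is fine, but there are two genuine slips you should fix.

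First, your opening displayed equation begs the question: what you write on the left as $(AB)_{MN}$ is in fact $c_{MN}$, the $(M,N)$-entry of $\tilde C/\langle\gamma\rangle$. The identity $(AB)_{MN} = \sum_{\tilde M}(\tilde A\tilde B)_{\tilde M\tilde N_0}$ is precisely the lemma. Start instead from $c_{MN} = \sum_{\tilde M}(\tilde A\tilde B)_{\tilde M\tilde N_0}$ and show this equals $\sum_L a_{ML}b_{LN}$.

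Second, your central displayed formula is wrong as written. After the reindexing $\tilde M\mapsto{}_{\gamma^c}\tilde M$ and the use of $\gamma$-invariance of $\tilde A$, the $\tilde a$-factor becomes independent of $c$, but the $\tilde b$-factor does \emph{not}: it remains $\tilde b_{{}_{\gamma^c}\tilde L_0,\tilde N_0}$. Your formula replaces it by the constant $\tilde b_{\tilde L_0\tilde N_0}$, which would produce $|L|\cdot a_{ML}\cdot \tilde b_{\tilde L_0\tilde N_0}\neq a_{ML}b_{LN}$ in general. The correct continuation is
\[
\Bigl(\sum_{\tilde M\in\ind(M)}\tilde a_{\tilde M\tilde L_0}\Bigr)\cdot\sum_{c=0}^{|L|-1}\tilde b_{{}_{\gamma^c}\tilde L_0,\tilde N_0}
= a_{ML}\cdot b_{LN},
\]
where the second factor is $b_{LN}$ \emph{by definition} of the quotient matrix. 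Once you see this, your entire final paragraph about ``repetition counts'' and stabilisers becomes unnecessary: there is no factor of $|L|$ to cancel, because the $c$-sum over $\ind(L)$ is exactly the sum defining $b_{LN}$, regardless of the relative sizes of the orbits $M$, $L$, $N$. This is precisely how the paper handles it.
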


\begin{proof}
	This is a straightforward calculation which we include for the convenience of the reader. For $F \in \{A,B,C\}$ we write $F = (f_{MN})_{M,N \in \min_\gamma(T)}$ and $\tilde{F} = (\tilde{f}_{\tilde{M}\tilde{N}})_{\tilde{M},\tilde{N} \in \ind(T)}$. 
	For each $M \in \min_\gamma(T)$ we fix a representative indecomposable summand $\tilde{M} \in \ind(M)$. Then the $(M,N)$th entry of the product $AB$ is given by
	
	\begin{align*}
		 (AB)_{MN} &= \sum_{K \in \min_\gamma(T)} a_{MK}b_{KN}  = \sum_{K \in \min_\gamma(T)} \sum_{p = 0}^{|M|-1}\tilde{a}_{{}_{\gamma^p}\tilde{M}\tilde{K}} \sum_{q = 0}^{|K|-1}\tilde{b}_{{}_{\gamma^q}\tilde{K}\tilde{N}} \\
		& = \sum_{K \in \min_\gamma(T)} \sum_{q = 0}^{|K|-1} \sum_{p = 0}^{|M|-1}\tilde{a}_{{}_{\gamma^{p+q}}\tilde{M}{}_{\gamma^q}\tilde{K}} \tilde{b}_{{}_{\gamma^q}\tilde{K}\tilde{N}} 
	\end{align*}
Note that for all $q \in \mathbb{Z}$ we have $\{{}_{\gamma^q}\tilde{M},{}_{\gamma^{q+1}}\tilde{M}, \ldots, {}_{\gamma^{q+|M|-1}}\tilde{M}\} = \ind(M)$ and so
	\begin{align*}
		 (AB)_{MN}  &= \sum_{K \in \min_\gamma(T)} \sum_{q = 0}^{|K|-1} \sum_{p = 0}^{|M|-1}\tilde{a}_{{}_{\gamma^{q+p}}\tilde{M}{}_{\gamma^q}\tilde{K}} \tilde{b}_{{}_{\gamma^q}\tilde{K}\tilde{N}}  &
		 		&= \sum_{\tilde{M}' \in \ind(M)} \sum_{K \in \min_\gamma(K)} \sum_{q = 0}^{|K|-1}\tilde{a}_{\tilde{M}'{}_{\gamma^q}\tilde{K}} \tilde{b}_{{}_{\gamma^q}\tilde{K}\tilde{N}} \\
				 &=  \sum_{\tilde{M}' \in \ind(M)} \sum_{\tilde{K}' \in \ind(T)} \tilde{a}_{\tilde{M}'\tilde{K}'} \tilde{b}_{\tilde{K}'\tilde{N}} &
				 &=  \sum_{\tilde{M}' \in \ind(M)} \tilde{c}_{\tilde{M}'\tilde{N}} = c_{MN}.
	\end{align*}

\end{proof}

\subsection{Ringel form and exchange matrix}

In order to better understand the matrix $\tilde{R}_T$ and its quotient $R_T$, with the goal to relate it to the exchange matrix $B_T$ we make the following observation.

\begin{proposition}\label{P:Ext3-i}
Let $\tilde{X}$ be an indecomposable non-projective summand of $T$. Then for all $0 \leq i \leq 3$ and for any simple $E = \End_\La(T)$-module $S_{\tilde{Z}}$ we have
\[
	\dim \Ext^{3-i}_E(S_{\tilde{X}},S_{\tilde{Z}}) = \dim \Ext^i_E(S_{\tilde{Z}},S_{{}_{\gamma^{-1}}\tilde{X}}).
\]
\end{proposition}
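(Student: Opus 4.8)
\textbf{Proof plan for Proposition \ref{P:Ext3-i}.}

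The plan is to exhibit an explicit projective resolution of the simple $E$-module $S_{\tilde{X}}$ of length $3$, and then to read off its symmetry with the analogous resolution of $S_{{}_{\gamma^{-1}}\tilde{X}}$ by applying the duality $\Hom_E(-, S_{\tilde{Z}})$. The key input is Sequence (\ref{S:resolution of SX}) from the proof of Proposition \ref{P:gldim}, which gives
\[
	0 \to \Hom_\La({}_{\gamma^{-1}}\tilde{X},T) \to \Hom_\La(\tilde{T}'',T) \to \Hom_\La(\tilde{T}',T) \to \Hom_\La(\tilde{X},T) \to S_{\tilde{X}} \to 0,
\]
where the two exact sequences producing it come from the exchange pair $(X,Y)$ with base $(\tilde{X},\tilde{Y})$ associated to $T/X$, as described in Remark \ref{R:twist}: namely $0 \to \tilde{X} \xrightarrow{\tilde{f}} \tilde{T}' \to \tilde{Y} \to 0$ and $0 \to \tilde{Y} \to \tilde{T}'' \to {}_{\gamma^{-1}}\tilde{X} \to 0$. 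First I would note that the leftmost term $\Hom_\La({}_{\gamma^{-1}}\tilde{X},T)$ is precisely the indecomposable projective $E$-module $P_{{}_{\gamma^{-1}}\tilde{X}}$, and that this resolution is minimal (the maps are radical maps, as the approximations involved are minimal and, by Theorem \ref{T:maximal rigid = 2-ct}(1), there are no loops). Hence $\dim \Ext^i_E(S_{\tilde{X}}, S_{\tilde{Z}})$ is computed as the multiplicity of $P_{\tilde{Z}}$ in the $i$-th projective term of this resolution, which is the number of arrows $\tilde{Z} \to (i\text{-th term})$ counted appropriately, or more precisely $\dim \Hom_E(-, S_{\tilde{Z}})$ applied to the $i$-th term.

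The symmetry then comes from the following observation: applying $\Hom_\La(-,T)$ to the two short exact sequences above and splicing gives the resolution of $S_{\tilde{X}}$ with terms (from degree $0$ to $3$)
\[
	\Hom_\La(\tilde{X},T),\quad \Hom_\La(\tilde{T}',T),\quad \Hom_\La(\tilde{T}'',T),\quad \Hom_\La({}_{\gamma^{-1}}\tilde{X},T).
\]
Now ${}_{\gamma^{-1}}\tilde{X}$ is again an indecomposable non-projective summand of $T$ (using $\gamma$-equivariance of $T$), so it has its own resolution of the same shape, built from the exchange pair with base $({}_{\gamma^{-1}}\tilde{X}, {}_{\gamma^{-1}}\tilde{Y})$ — which, by applying $\gamma^{-1}$ to the original sequences, has terms $\Hom_\La({}_{\gamma^{-1}}\tilde{X},T)$, $\Hom_\La({}_{\gamma^{-1}}\tilde{T}',T)$, $\Hom_\La({}_{\gamma^{-1}}\tilde{T}'',T)$, $\Hom_\La(\tilde{X},T)$ in degrees $0,1,2,3$. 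Comparing the two lists, the degree-$i$ term of the resolution of $S_{\tilde{X}}$ and the degree-$(3-i)$ term of the resolution of $S_{{}_{\gamma^{-1}}\tilde{X}}$ agree up to applying $\gamma^{-1}$ (for $i=1,2$) or are literally equal (for $i=0,3$). Applying $\Hom_E(-,S_{\tilde{Z}})$ in the first case and $\Hom_E(-,S_{{}_{\gamma^{-1}}\tilde{Z}})$ appropriately — or rather, matching up the two via the $\gamma$-action on vertices of the quiver — yields $\dim\Ext^{3-i}_E(S_{\tilde{X}},S_{\tilde{Z}}) = \dim\Ext^i_E(S_{\tilde{Z}}, S_{{}_{\gamma^{-1}}\tilde{X}})$.

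The main obstacle I anticipate is bookkeeping the $\gamma$-twist correctly: the two resolutions are not term-by-term equal but differ by a global twist by $\gamma^{-1}$ on the middle terms, and one must check that this twist interacts correctly with the index $\tilde{Z}$ versus ${}_{\gamma^{-1}}\tilde{X}$ so that the stated identity (rather than a variant with extra twists) comes out. Concretely, I would make this precise by observing that $\dim\Hom_E(\Hom_\La(\tilde{M},T), S_{\tilde{Z}})$ equals the multiplicity of the vertex $\tilde{Z}$ as a summand of $\tilde{M}$, and that this multiplicity is $\gamma$-invariant in both arguments simultaneously; combined with the shape of the two dual resolutions this forces the claimed equality. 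A minor secondary point to verify is that the resolution of $S_{\tilde{X}}$ has length exactly $3$ and no shorter (so that the degree-$0$ and degree-$3$ endpoints really do pair up), which is already contained in the proof of Proposition \ref{P:gldim} where $\mathrm{pdim}\, S_{\tilde{X}} = 3$ is established.
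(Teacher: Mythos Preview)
Your approach has a genuine gap. You construct a second \emph{projective} resolution, this time of $S_{{}_{\gamma^{-1}}\tilde{X}}$, and try to match its terms with those of the projective resolution of $S_{\tilde{X}}$. But a projective resolution of $S_{{}_{\gamma^{-1}}\tilde{X}}$ computes $\Ext^i_E(S_{{}_{\gamma^{-1}}\tilde{X}},-)$, not $\Ext^i_E(-,S_{{}_{\gamma^{-1}}\tilde{X}})$; the quantity on the right-hand side of the proposition, $\dim\Ext^i_E(S_{\tilde{Z}},S_{{}_{\gamma^{-1}}\tilde{X}})$, requires an \emph{injective} resolution of $S_{{}_{\gamma^{-1}}\tilde{X}}$ (or a projective resolution of $S_{\tilde{Z}}$, which you do not have for arbitrary $\tilde{Z}$). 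Your term-matching is also incorrect: the degree-$1$ term of the first resolution is $\Hom_\La(\tilde{T}',T)$ while the degree-$2$ term of your second resolution is $\Hom_\La({}_{\gamma^{-1}}\tilde{T}'',T)$, and $\tilde{T}'$ and $\tilde{T}''$ are unrelated in general (one comes from the forward exchange sequence, the other from the backward one). Moreover, the degree-$3$ term of your second resolution should be $\Hom_\La({}_{\gamma^{-2}}\tilde{X},T)$, not $\Hom_\La(\tilde{X},T)$, since $\gamma^2=\mathrm{id}$ is not assumed in this section.

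The paper's argument avoids all of this by applying the \emph{covariant} functor $D\Hom_\La(T,-)$ to the \emph{same} two exchange sequences $0\to\tilde{X}\to\tilde{T}'\to\tilde{Y}\to 0$ and $0\to\tilde{Y}\to\tilde{T}''\to{}_{\gamma^{-1}}\tilde{X}\to 0$ (no $\gamma$-twist). Splicing gives a minimal injective resolution
\[
0\to S_{{}_{\gamma^{-1}}\tilde{X}}\to D\Hom_\La(T,{}_{\gamma^{-1}}\tilde{X})\to D\Hom_\La(T,\tilde{T}'')\to D\Hom_\La(T,\tilde{T}')\to D\Hom_\La(T,\tilde{X})\to 0,
\]
whose $i$-th term $I_i$ satisfies $(I_i:I_{\tilde{Z}})=\dim\Ext^i_E(S_{\tilde{Z}},S_{{}_{\gamma^{-1}}\tilde{X}})$. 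Now the $\La$-module underlying $I_i$ is literally the same as the one underlying $P_{3-i}$ (both are ${}_{\gamma^{-1}}\tilde{X},\tilde{T}'',\tilde{T}',\tilde{X}$ for $i=0,1,2,3$), and by $\bK$-duality the multiplicity of $\tilde{Z}$ as a summand is read off identically on either side: $(P_{3-i}:P_{\tilde{Z}})=(I_i:I_{\tilde{Z}})$. This gives the claim directly, with no bookkeeping of $\gamma$-twists on the middle terms.
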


\begin{proof}
Consider the projective resolution of $S_{\tilde{X}}$ from Sequence (\ref{S:resolution of SX}) in the proof of Proposition \ref{P:gldim}
\begin{align}
	0 \to \Hom_\La({}_{\gamma^{-1}}\tilde{X},T) \to \Hom_\La(\tilde{T}'',T) \to \Hom_\La(\tilde{T}',T) \to \Hom_\La(\tilde{X},T) \to S_{\tilde{X}} \to 0,
\end{align}
that we obtained from applying $\Hom_\La(-,T)$ to appropriate summands of the exchange sequences (\ref{S:forwards}) and (\ref{S:backwards}) from Remark \ref{R:twist} and combining the obtained exact sequences. Based on this we set $P_0 = P_{\tilde{X}}$, $P_1 = \Hom_\La(\tilde{T}',T)$, $P_2 = \Hom_\La(\tilde{T}'',T)$ and $P_3 = P_{{}_{\gamma^{-1}}\tilde{X}}$.

Analogously, by applying $\Hom_\La(T,-)$ to appropriate summands of the sequence and postcomposing $D(-) = \Hom_\mathbb{K}(-,\mathbb{K})$, and then combining the obtained exact sequences, we obtain an injective resolution of $S_{{}_{\gamma^{-1}}\tilde{X}}$:
\[
	0 \to S_{{}_{\gamma^{-1}}\tilde{X}} \to D\Hom_\La(T,{}_{\gamma^{-1}}\tilde{X}) \to D\Hom_\La(T,\tilde{T}'') \to D\Hom_\La(T,\tilde{T}') \to D\Hom_\La(T, \tilde{X}) \to 0.
\]
Based on this we set $I_0 = I_{{}_{\gamma^{-1}}\tilde{X}}$, $I_1 =  D\Hom_\La(T,\tilde{T}'')$, $I_2 = D\Hom_\La(T,\tilde{T}')$ and $I_3 = I_{\tilde{X}}$.

For $E$-modules $M,N$ we denote by $(M : N)$ the multiplicity of $N$ as a summand of $M$. Then for every indecomposable summand $\tilde{Z}$ of $T$ and all $0 \leq i \leq 3$ we have
\begin{align*}
	(P_i : P_{\tilde{Z}}) = \dim \Ext^i_E(S_{\tilde{X}}, S_{\tilde{Z}})\; \text{and}  \; (I_i : I_{\tilde{Z}}) = \dim \Ext^i_E(S_{\tilde{Z}}, S_{{}_{\gamma^{-1}}\tilde{X}}).
\end{align*}
By $\mathbb{K}$-duality we see that for all $0 \leq i \leq 3$ we have $(P_{3-i} : P_{\tilde{Z}}) = (I_i : I_{\tilde{Z}})$ and the claim follows.

\end{proof}

We denote by $R^\circ_T$ the principal part of the matrix $R_T$, i.e.\ the submatrix labelled by non-projective minimal $\gamma$-equivariant modules:
\[
	R^\circ_T = (r_{MN})_{M,N \in \min_\gamma(T) \setminus \mathrm{proj}_\gamma(\La)}.
\]

\begin{lemma}\label{L:exchange and Ringel}
	The principal parts of $B_T$ and $R_T$ coincide:
	\[
		B^\circ_T = R^\circ_T.
	\]
\end{lemma}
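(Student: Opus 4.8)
The plan is to compare both matrices entry-by-entry on the non-projective minimal $\gamma$-equivariant summands, reducing everything to a statement about $\Ext$-groups of simple $E$-modules, where $E = \End_\La(T)$. Recall that for $M, N \in \min_\gamma(T) \setminus \mathrm{proj}_\gamma(\La)$, choosing representatives $\tilde M \in \ind(M)$ and $\tilde N \in \ind(N)$, the entry $b_{MN}$ of $B_T$ is, by Definition \ref{D:quotient matrix} applied to $\tilde b_{\tilde M \tilde N} = \dim\Ext^1_E(S_{\tilde N}, S_{\tilde M}) - \dim\Ext^1_E(S_{\tilde M}, S_{\tilde N})$, the sum over $\tilde M' \in \ind(M)$ of $\tilde b_{\tilde M' \tilde N}$. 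For $R_T$, using $\tilde R_T = \tilde C_T^{-t}$ and the identity $\tilde r_{ij} = \langle S_i, S_j\rangle = \sum_{k\geq 0}(-1)^k\dim\Ext^k_E(S_i, S_j)$ (the alternating sum, by the standard interpretation of the Ringel form for an algebra of finite global dimension, together with $\mathrm{gl.dim}\,E = 3$ from Theorem \ref{T:maximal rigid = 2-ct}), the entry $r_{MN}$ is the sum over $\tilde M' \in \ind(M)$ of $\tilde r_{\tilde M' \tilde N} = \sum_{k=0}^3 (-1)^k \dim\Ext^k_E(S_{\tilde M'}, S_{\tilde N})$.

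Next I would handle the $k=0$ and $k=3$ contributions. For $k=0$: since $M, N$ are both minimal $\gamma$-equivariant summands of the basic module $T$, the representatives $\tilde M'$ range over a full $\gamma$-orbit, and $\dim\Ext^0_E(S_{\tilde M'}, S_{\tilde N}) = \delta_{\tilde M', \tilde N}$; summing over $\tilde M' \in \ind(M)$ gives $\delta_{MN}$, which vanishes off the diagonal, and on the diagonal one checks it is consistent (the diagonal entries of $B_T$ vanish by the Remark following Theorem \ref{T:maximal rigid = 2-ct}, item (1), and this is exactly where item (1) — no loops — is used). For $k=3$: here is where Proposition \ref{P:Ext3-i} enters. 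It gives $\dim\Ext^3_E(S_{\tilde X}, S_{\tilde Z}) = \dim\Ext^0_E(S_{\tilde Z}, S_{{}_{\gamma^{-1}}\tilde X}) = \delta_{\tilde Z, {}_{\gamma^{-1}}\tilde X}$ for $\tilde X$ a non-projective summand. Summing $\dim\Ext^3_E(S_{\tilde M'}, S_{\tilde N})$ over $\tilde M' \in \ind(M)$ counts the $\tilde M'$ with ${}_{\gamma^{-1}}\tilde M' \cong \tilde N$; since $N \neq M$ (off-diagonal) and $N$ is a $\gamma$-orbit disjoint from $M$, this is $0$; on the diagonal it again pairs up against the no-loops condition. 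So the $k=0$ and $k=3$ terms contribute nothing off the diagonal (and cancel against the diagonal bookkeeping), leaving $r_{MN} = -\sum_{\tilde M' \in \ind(M)}\dim\Ext^1_E(S_{\tilde M'}, S_{\tilde N}) + \sum_{\tilde M'\in\ind(M)}\dim\Ext^2_E(S_{\tilde M'}, S_{\tilde N})$.

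Finally I would use Proposition \ref{P:Ext3-i} once more, now with $i=1$: $\dim\Ext^2_E(S_{\tilde M'}, S_{\tilde N}) = \dim\Ext^1_E(S_{\tilde N}, S_{{}_{\gamma^{-1}}\tilde M'})$. As $\tilde M'$ runs over $\ind(M)$, so does ${}_{\gamma^{-1}}\tilde M'$, hence $\sum_{\tilde M'\in\ind(M)}\dim\Ext^2_E(S_{\tilde M'}, S_{\tilde N}) = \sum_{\tilde M''\in\ind(M)}\dim\Ext^1_E(S_{\tilde N}, S_{\tilde M''})$. Therefore $r_{MN} = \sum_{\tilde M''\in\ind(M)}\bigl(\dim\Ext^1_E(S_{\tilde N}, S_{\tilde M''}) - \dim\Ext^1_E(S_{\tilde M''}, S_{\tilde N})\bigr) = \sum_{\tilde M''\in\ind(M)}\tilde b_{\tilde M''\tilde N} = b_{MN}$, which is exactly what Definition \ref{D:quotient matrix} gives for $B_T$. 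This proves $R^\circ_T = B^\circ_T$. The main obstacle is bookkeeping: making sure the re-indexing of the sums over $\gamma$-orbits is done correctly (exactly as in Lemma \ref{L:matrix multiplication}), and being careful that Proposition \ref{P:Ext3-i} is only available for non-projective summands — which is fine, since we have restricted to the principal parts $R^\circ_T$ and $B^\circ_T$ throughout.
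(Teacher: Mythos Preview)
Your proposal is correct and follows essentially the same approach as the paper: both arguments expand $\tilde r_{\tilde M\tilde N}$ as the alternating sum $\sum_{k=0}^{3}(-1)^k\dim\Ext^k_E(S_{\tilde M},S_{\tilde N})$, use Proposition~\ref{P:Ext3-i} to convert the $\Ext^2$ and $\Ext^3$ terms into $\Ext^1$ and $\Ext^0$ with a $\gamma$-shift, invoke item~(1) of Theorem~\ref{T:maximal rigid = 2-ct} to kill the within-orbit $\Ext^1$ contributions, and then sum over the $\gamma$-orbit so that the shift is absorbed. The only organisational difference is that the paper separates the cases $M\cong N$ and $M\not\cong N$ before summing, whereas you treat the four degrees uniformly and handle the diagonal via the cancellation of the $k=0$ and $k=3$ contributions; both amount to the same computation.
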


\begin{proof}
	Let $B_T = (b_{MN})_{M,N \in \min_\gamma(T)}$ and $R_T = (r_{MN})_{M,N \in \min_\gamma(T)}$. Let $M$ and $N$ be non-projective $\gamma$-equivariant summands of $T$, and let $\tilde{N}$ be an indecomposable summand of $N$.  Set $E = \End_\La(T)$. By Proposition \ref{P:Ext3-i} and Item (1) from Theorem \ref{T:maximal rigid = 2-ct} for all $\tilde{M} \in \ind(M)$ we have
\begin{align*}
	\tilde{r}_{\tilde{M}\tilde{N}} & = 	\begin{cases}
								\dim \Ext_{E}^1(S_{\tilde{N}}, S_{{}_{\gamma}\tilde{M}}) - \dim \Ext_{E}^1(S_{\tilde{M}},S_{\tilde{N}}) & \text{if $M \ncong N$}\\
								\dim \Hom_{E}(S_{\tilde{M}},S_{\tilde{N}}) - \dim \Hom_{E}(S_{\tilde{N}},S_{{}_{\gamma}\tilde{M}}) & \text{if $M \cong N$}.
							\end{cases}\\
\end{align*}
If $M \ncong N$ we compute
	\begin{align*}
		b_{MN} &= \sum_{\tilde{M} \in \ind(M)} \tilde{b}_{\tilde{M}\tilde{N}}  = \sum_{\tilde{M} \in \ind(M)}\dim \Ext^1_E(S_{\tilde{N}},S_{\tilde{M}}) - \sum_{\tilde{M} \in \ind(M)}\dim \Ext^1_E(S_{\tilde{M}}, S_{\tilde{N}})\\
		& = \sum_{\tilde{M} \in \ind(M)}\dim \Ext^1_E(S_{\tilde{N}},S_{{}_{\gamma}\tilde{M}}) - \sum_{\tilde{M} \in \ind(M)}\dim \Ext^1_E(S_{\tilde{M}}, S_{\tilde{N}}) = \sum_{\tilde{M} \in \ind(M)} \tilde{r}_{\tilde{M}\tilde{N}} = r_{MN},
	\end{align*}
	and if $M \cong N$ by Item (1) from Theorem \ref{T:maximal rigid = 2-ct} we see that
	\begin{align*}
		b_{NN} &= \sum_{\tilde{M} \in \ind(N)} \tilde{b}_{\tilde{M}\tilde{N}}  = \sum_{\tilde{M} \in \ind(N)}\dim \Ext^1_E(S_{\tilde{N}},S_{\tilde{M}}) - \sum_{\tilde{M} \in \ind(N)}\dim \Ext^1_E(S_{\tilde{M}}, S_{\tilde{N}}) = 0 \\
		& =  \sum_{\tilde{M} \in \ind(N)}\dim \Hom_E(S_{\tilde{M}}, S_{\tilde{N}}) - \sum_{\tilde{M} \in \ind(N)}\dim \Hom_E(S_{\tilde{N}},S_{{}_{\gamma}\tilde{M}}) = \sum_{\tilde{M} \in \ind(N)} \tilde{r}_{\tilde{M}\tilde{N}} = r_{NN},
	\end{align*}
\end{proof}

\subsection{Proof of Theorem \ref{T:mutation matrix}}

Let $X$ be a minimal $\gamma$-equivariant summand of our $\gamma$-equivariant cluster tilting module $T$. We consider the mutation
\[
	T^* = \mu_X(T) = T/X \oplus Y
\]
of $T$ in the direction of $X$. 
Before we prove Theorem \ref{T:mutation matrix} we observe the behaviour of the matrix $G_T$ under mutation of $T$ in the direction of $X$. Recall that $G_T$ is the quotient of the transpose of the Cartan matrix $\tilde{C}_T$ by $\gamma$; $G_T = \tilde{C}^t/\langle \gamma \rangle$. Due to its description in terms of the Cartan matrix, the new matrix $G_{T^*}$ can be determined in a straightforward way using representation theory.

As before, let $B_T = (b_{MN})_{M,N \in \min_{\gamma}(T)}$ be the exchange matrix of $T$, the quotient of the matrix $\tilde{B} = (\tilde{b}_{\tilde{M}\tilde{N}})_{\tilde{M},\tilde{N} \in \ind(T)}$.
We set
\begin{align*}
	U_{T,X} & = 	I_n - 2E_{XX} + \sum_{b_{XZ}\leq0}|b_{XZ}|E_{XZ}, \\ 
	W_{T,X} & = I_n - 2E_{XX} + \sum_{b_{ZX}\geq0}|b_{ZX}|E_{ZX}.
\end{align*}
Note that these are precisely the matrices describing matrix mutation of $B_T$ in direction $X$, as in Lemma \ref{L:matrix mutation}.

\begin{proposition}\label{P:mutation Cartan}
	Assume $T$ is an admissible $\gamma$-equivariant cluster tilting module. Let $T^* = \mu_X(T)$ be the mutation of $T$ in the direction of $X$, and set $U = U_{T,X}$ and 
	$W = W_{T,X}$. Then
	\[
		G_{T^*} = UG_TW.
	\]
\end{proposition}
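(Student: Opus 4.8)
The plan is to compute both sides of the claimed identity $G_{T^*} = UG_TW$ by tracking how the Cartan matrix of $\End_\La(T)$ changes under the mutation $T^* = \mu_X(T) = T/X \oplus Y$, and to do this on the level of the $\langle\gamma\rangle$-quotient matrices. Recall $G_T = \tilde{C}_T^t/\langle\gamma\rangle$, where $\tilde c_{\tilde M\tilde N} = \dim\Hom_\La(\tilde N,\tilde M)$, so the $(M,N)$-entry of $G_T$ is $\sum_{\tilde M\in\ind(M)}\dim\Hom_\La(\tilde M,\tilde N)$ for a fixed representative $\tilde N\in\ind(N)$; equivalently $g_{MN} = \dim\Hom_\La(M,\tilde N)$. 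The only column (and row) that changes when passing from $T$ to $T^*$ is the one labelled by $X$, which gets replaced by the one labelled by $Y$. So the crux is to express $\dim\Hom_\La(Y,\tilde Z)$ and $\dim\Hom_\La(\tilde M, \tilde Y)$ in terms of the corresponding quantities for $X$ and the entries of $B_T$.

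First I would use the exchange sequences from Proposition \ref{P:exchange pair} and Remark \ref{R:twist}. Applying $\Hom_\La(-,\tilde Z)$ to the exchange sequence $0\to X\to T'\to Y\to 0$ gives, together with Corollary \ref{C:projdim} (which guarantees the relevant $\Ext^1$-term vanishes for $\tilde Z$ a summand of $T$, since $\Hom_\La(-,T)$ has projective dimension $\le 1$ — more precisely one uses that $T'\in\add(T/X)$ and the approximation property), a short exact sequence relating $\dim\Hom_\La(Y,\tilde Z)$, $\dim\Hom_\La(T',\tilde Z)$ and $\dim\Hom_\La(X,\tilde Z)$. Since $T' = \bigoplus_{Z} Z^{\,[T:X\to Z]}$ with multiplicities governed by arrows out of $X$ in the quiver of $E$ — and here the admissibility hypothesis is exactly what lets us read these multiplicities off the quotient matrix $B_T$, because it forces the arrows between a $\gamma$-orbit to be "coherently oriented" so that $\dim\Ext^1$ does not mix signs within an orbit — the middle term contributes the combination $\sum_{b_{XZ}<0}|b_{XZ}|\,g_{ZN}$ appearing in $UG_T$. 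Dually, applying $\Hom_\La(-,\tilde Y)$ or $\Hom_\La(T,-)$ composed with $D$ to the \emph{backwards} exchange sequence $0\to Y\to M\to X\to 0$ (where $M\in\add(T/X)$), and using Proposition \ref{P:exchange backwards}, produces the factor $W$ acting on the right, which accounts for arrows \emph{into} $X$. Assembling the column replacement and the row replacement, and checking that the diagonal entry at $X$ behaves correctly (it flips sign, matching the $-2E_{XX}$ terms in $U$ and $W$), yields $G_{T^*} = UG_TW$ after a bookkeeping argument analogous to the proof of Lemma \ref{L:matrix mutation}. The passage between the genuine Cartan matrices $\tilde C$ and their quotients is handled by Lemma \ref{L:matrix multiplication}.

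The main obstacle I anticipate is precisely the interplay between the $\gamma$-orbit structure and the multiplicities in $T'$ and $M$: on the level of the algebra $\End_\La(T)$ the exchange sequences involve the \emph{indecomposable} approximation terms $\tilde T'$, $\tilde M$, and one must carefully sum over the $\gamma$-orbit of $X$ (using that $X = \bigoplus_{i=0}^{n}{}_{\gamma^i}\tilde X$ and the twist-by-$\gamma$ in the backwards sequence, cf.\ Remark \ref{R:twist}) to see that the quotient matrix picks up exactly the combinatorial mutation rule rather than something twisted. The admissibility assumption on $T$ is what makes the signs $b_{XZ}$ in the quotient matrix faithfully record whether the arrows $X\to {}_{\gamma^i} Z$ all point the same way, so that $U_{T,X}$ and $W_{T,X}$ correctly encode the summands of $T'$ and $M$; without it the multiplicities in $\tilde T'$ could fail to be constant along orbits and the quotient-matrix identity would break. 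A secondary technical point is verifying that the relevant $\Ext^1$ and $\Ext^2$ terms over $E$ vanish so that the sequences obtained by applying $\Hom_\La(-,T)$ really are exact in the required degrees; this is where Corollaries \ref{C:ses} and \ref{C:projdim} and Theorem \ref{T:maximal rigid = 2-ct}(1) (no loops) do the work. Once these are in place the computation is essentially formal, mirroring \cite[Section~7]{GLS-3}.
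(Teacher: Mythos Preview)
Your approach is essentially that of the paper: compute the entries of $G_{T^*}$ via the exchange sequences and compare them to the entries of $UG_TW$, using admissibility exactly to convert the condition $\tilde b_{\tilde Z\tilde X}\geq 0$ on indecomposables into the condition $b_{ZX}\geq 0$ on $\gamma$-orbits when passing to the quotient.

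Two points of divergence are worth flagging. First, the paper uses \emph{only} the forward exchange sequence $0\to \tilde X\to \tilde T'\to \tilde Y\to 0$, applied once with $\Hom_\La(-,\tilde M)$ (for the $Y$-column of $\tilde C_{T^*}$) and once with $\Hom_\La(\tilde N,-)$ (for the $Y$-row), together with skew-symmetry of $\tilde B_T$ to flip the sign condition $\tilde b_{\tilde Z\tilde X}\geq 0$ into $\tilde b_{\tilde X\tilde Z}\leq 0$. You instead propose pairing the forward sequence with the backward one $0\to Y\to M\to X\to 0$. This can be made to work, but the middle term $M$ of the backward sequence carries the $\gamma$-twist of Remark~\ref{R:twist}, so its summands are governed by $\Ext^1_E(S_{\tilde Z},S_{{}_{\gamma^{-1}}\tilde X})$ rather than by $\Ext^1_E(S_{\tilde Z},S_{\tilde X})$; you would have to unwind this twist before matching against the entries of $W$, whereas the paper's route sidesteps it entirely.

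Second, the vanishing of the relevant $\Ext^1_\La$-terms (e.g.\ $\Ext^1_\La({}_{\gamma^i}\tilde Y,\tilde M)=0$ for $\tilde M\in\ind(T/X)$) comes straight from rigidity of $T^*=T/X\oplus Y$ and of $T$, not from Corollary~\ref{C:projdim}, which concerns projective dimension over $E$. This is only a matter of citing the right fact; the argument you sketch is otherwise sound.
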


Note that in Proposition \ref{P:mutation Cartan}, we explicitly need $T$ to be an admissible cluster tilting module, see Section \ref{S:admissible} for details on admissible cluster tilting modules in $\modu \La$.

\begin{proof}
	We compare the matrices $C^* = C_{T^*}$ and 
	$C' = UC_TV^t$ component wise.  	
	{\bf Step 1:}
	First, consider the matrix $G' = (g'_{MN})_{M,N \in \ind(T)}$. We have $U = (u_{MN})_{M,N \in \min_\gamma(T)}$ and
	$W = (w_{MN})_{M,N \in \min_\gamma(T)}$ and
	\[
		g'_{MN} = \sum_{L,K \in \ind(T)}u_{ML}g_{LK}w_{KN}.
	\]
We compute
	\[
		g'_{MN} = \begin{cases}
					g_{MN} & \text{if $M \neq X$, $N \neq X$} \\
					-g_{MX} + \sum_{b_{KX } \geq 0}b_{KX}g_{MK} & \text{if $M \neq X, N = X$}\\
					-g_{XN} - \sum_{b_{XL \leq 0}}b_{XL}g_{LN} & \text{if $M = X, N \neq X$}\\
					g_{XX} - \sum_{b_{LX} \geq 0}b_{LX}g_{XL} + \sum_{b_{XK \leq 0}}b_{XK}g_{KX}  - \sum_{b_{XK} \leq 0} \sum_{b_{LX} \geq 0}  b_{XK}g_{KL}b_{LX}& \text{if $M = N = X$}.
				\end{cases}
	\]

{\bf Step 2:}	
We use representation theory to express the Cartan matrix $\tilde{C}_{T^*}$ of $\End_\La(T^*)$ in terms of the Cartan matrix $\tilde{C}_{T} = (\tilde{c}_{\tilde{M}\tilde{N}})_{\tilde{M},\tilde{N} \in \ind(T)}$ of $\End_\La(T)$, and, as a consequence, express its transpose matrix $\tilde{G}_{T^*} = \tilde{C}_{T^*}^t$ in terms of $\tilde{G}_T = \tilde{C}_T^t$. We have $\tilde{C}_{T^*} = (\tilde{c}^*_{\tilde{M}\tilde{N}})_{\tilde{M},\tilde{N} \in \ind(T^*)}$, where
$\tilde{c}^*_{\tilde{M}\tilde{N}} = \dim \Hom_\La(\tilde{N},\tilde{M})$.
Let now $\tilde{M},\tilde{N} \in \ind(T^*)$. We immediately see that if $\tilde{M}, \tilde{N} \in \ind(T^*) \setminus \ind(Y) = \ind(T) \setminus \ind(X)$ then
\[
	\tilde{c}^*_{\tilde{M}\tilde{N}} = \tilde{c}_{\tilde{M}\tilde{N}}.
\]
We next want to express $\tilde{c}^*_{\tilde{M}\tilde{N}}$ in terms of entries in $\tilde{C}_T$ for the cases where $\tilde{M}$ or $\tilde{N}$ are in $\add(Y)$. Assume $X = \bigoplus_{i=0}^n {}_{\gamma^i}\tilde{X}$, where $\tilde{X}$ is indecomposable and consider the exchange sequence
\begin{equation}\label{E:exchange sequence}
	0 \to X \to T' \to Y \to 0 = \bigoplus_{i=0}^n 0 \to {}_{\gamma^i}\tilde{X} \xrightarrow{{}_{\gamma^i}\tilde{f}} {}_{\gamma^i}\tilde{T}' \xrightarrow{{}_{\gamma^i}\tilde{g}} {}_{\gamma^i}\tilde{Y} \to 0.
\end{equation}
For $\La$-modules $L,K$ we denote by $(L : K)$ the multiplicity of $K$ as a summand of $L$. Since ${}_{\gamma^i}\tilde{f}$, respectively ${}_{\gamma^i}\tilde{g}$, are a minimal left, respectively right, $\add(T/X)$-approximations, for an indecomposable $\La$-module $\tilde{Z} \in \add(T/X)$ we have
\[
	(_{\gamma^i} \tilde{T}' : \tilde{Z}) = \dim \Ext^1_{\End_\La(T)}(_{\gamma^i}\tilde{X},\tilde{Z}) = 
		\begin{cases}
			\tilde{b}_{\tilde{Z}{}_{\gamma^i}\tilde{X}} & \text{if $\tilde{b}_{\tilde{Z}{}_{\gamma^i}\tilde{X}} \geq 0$}\\
			0 & \text{else},
		\end{cases}
\]
that is 
\[
	{}_{\gamma^i}\tilde{T}'  = \bigoplus_{\tilde{b}_{\tilde{Z}{}_{\gamma^i}\tilde{X}} \geq 0} \tilde{Z}^{\tilde{b}_{\tilde{Z}{}_{\gamma^i}\tilde{X}}}.
\]
Assume first that $\tilde{M} \in \ind(T^*) \setminus \ind(Y)$ and $\tilde{N} = {}_{\gamma^i}\tilde{Y} \in \ind(Y)$. Applying $\Hom_\La(-,\tilde{M})$ to the appropriate summand (featuring ${}_{\gamma^i}\tilde{Y}$) of Sequence (\ref{E:exchange sequence}) yields the exact sequence
\[
	 0 \to \Hom_\La({}_{\gamma^i}\tilde{Y},\tilde{M}) \xrightarrow{} \Hom(\bigoplus_{\tilde{b}_{\tilde{Z}{}_{\gamma^i}\tilde{X}} \geq 0} \tilde{Z}^{\tilde{b}_{\tilde{Z}{}_{\gamma^i}\tilde{X}}},\tilde{M}) \xrightarrow{} \Hom({}_{\gamma^i}\tilde{X},\tilde{M}) \to \Ext^1({}_{\gamma^i}\tilde{Y},\tilde{M}) = 0,
\]
and thus
\begin{align*}
	\tilde{c}^*_{\tilde{M}{}_{\gamma^i}\tilde{Y}} & = \dim \Hom_\La( {}_{\gamma^i}\tilde{Y} ,\tilde{M}) = - \dim \Hom_\La({}_{\gamma^i}\tilde{X},\tilde{M}) +\sum_{\tilde{b}_{\tilde{Z}{}_{\gamma^i}\tilde{X}} \geq 0} \tilde{b}_{\tilde{Z}{}_{\gamma^i}\tilde{X}}\dim \Hom_\La(\tilde{Z}, \tilde{M}) 
	\\ 
	&= - \tilde{c}_{\tilde{M}{}_{\gamma^i}\tilde{X}} + \sum_{\tilde{b}_{\tilde{Z}{}_{\gamma^i}\tilde{X}} \geq 0} \tilde{b}_{\tilde{Z}{}_{\gamma^i}\tilde{X}}\tilde{c}_{\tilde{M}\tilde{Z}}.
\end{align*}
Similarly, if $\tilde{M} = {}_{\gamma^i}\tilde{Y} \in \ind(Y)$ and $\tilde{N} \in \ind(T^*) \setminus \ind(Y)$, applying $\Hom_\La(\tilde{N},-)$ to the appropriate summand (featuring ${}_{\gamma^i}\tilde{Y}$) of Sequence (\ref{E:exchange sequence}) yields 
\begin{align*}
	\tilde{c}^*_{{}_{\gamma^i}\tilde{Y}\tilde{N}} 
	& = - \tilde{c}_{{}_{\gamma^i}\tilde{X}\tilde{N}} + \sum_{\tilde{b}_{\tilde{Z}{}_{\gamma^i}\tilde{X}} \geq 0} \tilde{b}_{\tilde{Z}{}_{\gamma^i}\tilde{X}}\tilde{c}_{\tilde{Z}\tilde{N}}.
\end{align*}
Finally, assume $\tilde{M}, \tilde{N} \in \add(Y)$, with $\tilde{M} = {}_{\gamma^i}\tilde{Y}$ and $\tilde{N} = {}_{\gamma^j}\tilde{Y}$. Applying $\Hom_\La(-,{}_{\gamma^i}\tilde{Y})$ to the appropriate summand (featuring ${}_{\gamma^j}\tilde{Y}$) of Equation (\ref{E:exchange sequence}) yields
\begin{align*}
	\tilde{c}^*_{{}_{\gamma^i}\tilde{Y}{}_{\gamma^j}\tilde{Y}} & = \dim \Hom_\La( {}_{\gamma^j}\tilde{Y} ,{}_{\gamma^i}\tilde{Y}) = - \dim \Hom_\La({}_{\gamma^j}\tilde{X},{}_{\gamma^i}\tilde{Y}) + \sum_{\tilde{b}_{\tilde{Z}{}_{\gamma^j}\tilde{X}} \geq 0} \tilde{b}_{\tilde{Z}{}_{\gamma^j}\tilde{X}}\dim \Hom_\La(\tilde{Z}, {}_{\gamma^i}\tilde{Y})
\end{align*}
In order to break down the right hand side of the above equation, first apply $\Hom_\La({}_{\gamma^j}\tilde{X},-)$ to the appropriate summand (featuring ${}_{\gamma^i}\tilde{Y}$) of Sequenc (\ref{E:exchange sequence}) to obtain
\begin{align*}
	\dim \Hom_\La({}_{\gamma^j}\tilde{X},{}_{\gamma^i}\tilde{Y}) & = - \dim \Hom_\La({}_{\gamma^j}\tilde{X},{}_{\gamma^i}\tilde{X}) + \sum_{\tilde{b}_{\tilde{Z}{}_{\gamma^i}\tilde{X}} \geq 0} \tilde{b}_{\tilde{Z}{}_{\gamma^i}\tilde{X}} \dim \Hom_\La({}_{\gamma^j}\tilde{X}, \tilde{Z}) \\
	& = - \tilde{c}_{{}_{\gamma^i}\tilde{X}{}_{\gamma^j}\tilde{X}} +  \sum_{\tilde{b}_{\tilde{Z}{}_{\gamma^i}\tilde{X}} \geq 0} \tilde{b}_{\tilde{Z}{}_{\gamma^i}\tilde{X}} \tilde{c}_{\tilde{Z}{}_{\gamma^j}\tilde{X}}.
\end{align*}
Second, apply $\Hom_\La(\tilde{Z},-)$, for $\tilde{Z} \in \ind(T/X)$ to the appropriate summand of Sequence (\ref{E:exchange sequence}) to obtain
\[
	\dim \Hom_\La(\tilde{Z}, {}_{\gamma^i}\tilde{Y})  =  - \tilde{c}_{{}_{\gamma^i}\tilde{X}\tilde{Z}} + \sum_{\tilde{b}_{\tilde{L}{}_{\gamma^i}\tilde{X}} \geq 0} \tilde{b}_{\tilde{L}{}_{\gamma^i}\tilde{X}}\tilde{c}_{\tilde{L}\tilde{Z}}.
\]
Combining these results, we get
\begin{align*}
	\tilde{c}^*_{{}_{\gamma^i}\tilde{Y}{}_{\gamma^j}\tilde{Y}}  
	& = 	\tilde{c}_{{}_{\gamma^i}\tilde{X}{}_{\gamma^j}\tilde{X}} -  
		\sum_{\tilde{b}_{\tilde{Z}{}_{\gamma^i}\tilde{X}} \geq 0} \tilde{b}_{\tilde{Z}{}_{\gamma^i}\tilde{X}} \tilde{c}_{\tilde{Z}{}_{\gamma^j}\tilde{X}} - 					\sum_{\tilde{b}_{\tilde{Z}{}_{\gamma^j}\tilde{X}} \geq 0}  \tilde{b}_{\tilde{Z}{}_{\gamma^j}\tilde{X}}\tilde{c}_{{}_{\gamma^i}\tilde{X}\tilde{Z}} + 					\sum_{\tilde{b}_{\tilde{Z}{}_{\gamma^j}\tilde{X}} \geq 0}  \sum_{\tilde{b}_{\tilde{L}{}_{\gamma^i}\tilde{X}} \geq 0} \tilde{b}_{\tilde{Z}{}_{\gamma^j}\tilde{X}} \tilde{b}_{\tilde{L}{}_{\gamma^i}\tilde{X}}\tilde{c}_{\tilde{L}\tilde{Z}}.
\end{align*}

To summarise, taking the transpose matrices $\tilde{G}_{T^*} = (\tilde{g}^*_{\tilde{M}\tilde{N}})_{\tilde{M},\tilde{N} \in \ind(T^*)}$ and $\tilde{G}_{T} = (\tilde{g}_{\tilde{M}\tilde{N}})_{\tilde{M},\tilde{N} \in \ind(T)}$ of $\tilde{C}_{T^*}$ and $\tilde{C}_T$ respectively we obtain
\[
	\tilde{g}^*_{\tilde{M}\tilde{N}} = 	\begin{cases}
								\tilde{g}_{\tilde{M}\tilde{N}}  & \text{if $\tilde{M},\tilde{N} \in \ind(T^*) \setminus \ind(Y)	= \ind(T) \setminus \ind(X)$}\\
								- \tilde{g}_{\tilde{M}{}_{\gamma^i}\tilde{X}} + \sum_{\tilde{b}_{\tilde{Z}{}_{\gamma^i}\tilde{X}} \geq 0} \tilde{b}_{\tilde{Z}{}_{\gamma^i}\tilde{X}}\tilde{g}_{\tilde{M}\tilde{Z}} & \text{if $\tilde{M} \in \ind(T^*) \setminus \ind(Y), \tilde{N} = {}_{\gamma^i}\tilde{Y}$ for $i \in \bZ$}\\
								 - \tilde{g}_{{}_{\gamma^i}\tilde{X}\tilde{N}} + \sum_{\tilde{b}_{\tilde{Z}{}_{\gamma^i}\tilde{X}} \geq 0} \tilde{b}_{\tilde{Z}{}_{\gamma^i}\tilde{X}}\tilde{g}_{\tilde{Z}\tilde{N}}			& \text{if $\tilde{M} = {}_{\gamma^i}\tilde{Y}$ for $i \in \bZ$, $\tilde{N} \in \ind(T^*) \setminus \ind(Y)$}					
							\\
							\tilde{g}_{{}_{\gamma^j}\tilde{X}{}_{\gamma^i}\tilde{X}} -  
		\sum_{\tilde{b}_{\tilde{Z}{}_{\gamma^i}\tilde{X}} \geq 0} \tilde{b}_{\tilde{Z}{}_{\gamma^i}\tilde{X}} \tilde{g}_{{}_{\gamma^j}\tilde{X}\tilde{Z}} \\ - 	\sum_{\tilde{b}_{\tilde{Z}{}_{\gamma^j}\tilde{X}} \geq 0}  \tilde{b}_{\tilde{Z}{}_{\gamma^j}\tilde{X}}\tilde{g}_{\tilde{Z}{}_{\gamma^i}\tilde{X}}  \\-					\sum_{\tilde{b}_{{}_{\gamma^j}\tilde{X}\tilde{Z}} \leq 0}  \sum_{\tilde{b}_{\tilde{L}{}_{\gamma^i}\tilde{X}} \geq 0} \tilde{b}_{{}_{\gamma^j}\tilde{X}\tilde{Z}}\tilde{g}_{\tilde{Z}\tilde{L}} \tilde{b}_{\tilde{L}{}_{\gamma^i}\tilde{X}} & \text{if $\tilde{M} = {}_{\gamma^j}\tilde{Y}, \tilde{N} = {}_{\gamma^i}\tilde{Y}$ for $i ,j \in \bZ$}
						 
	 \end{cases} 
\]
{\bf Step 3:}
Now we take quotients by $\langle \gamma\rangle$, in the setting of matrices, and compute the entries of the quotient matrix $G_{T^*} = \tilde{G}_{T^*}/\langle \gamma \rangle$ in terms of $G_T = \tilde{G}_T/\langle \gamma \rangle$. Note that, while we have not used the assumption that $T$ is admissible in Steps 1 or 2, the proof from here on relies on this assumption. Set $G_{T^*} = (g^*_{MN})_{M,N \in \min_\gamma)(T^*)}$ and $G_T = (g_{MN})_{M,N \in \min_\gamma(T)}$. 
We have
\[
	g^*_{MN} = \sum_{\tilde{M} \in \add(M)} \tilde{g}^*_{\tilde{M}\tilde{N}}.
\]
For $M,N \in \min_\gamma(T^*) \setminus \{Y\}$ we obtain $g^*_{MN} = g_{MN}$.
For $M \in \min_\gamma(T^*) \setminus \{Y\}$ and $N = Y$, we obtain
\begin{align*}
	g^*_{MY} & =  - \sum_{\tilde{M} \in \ind(M)} \tilde{g}_{\tilde{M}\tilde{X}} + \sum_{\tilde{M} \in \ind(M)} \sum_{\tilde{b}_{\tilde{Z}{}_{\gamma^i}\tilde{X}} \geq 0} \tilde{b}_{\tilde{Z}{}_{\gamma^i}\tilde{X}}  \tilde{g}_{\tilde{M}\tilde{Z}} 
			& = & - g_{MX} + \sum_{\tilde{b}_{\tilde{Z}{}_{\gamma^i}\tilde{X}} \geq 0}  \sum_{\tilde{M} \in \ind(M)} \tilde{g}_{\tilde{M}\tilde{Z}} \tilde{b}_{\tilde{Z}{}_{\gamma^i}\tilde{X}}  \\
			& =  - g_{MX} + \sum_{b_{ZX} \geq 0} \sum_{\tilde{Z} \in \ind(Z)} \sum_{\tilde{M} \in \ind(M)} \tilde{g}_{\tilde{M}\tilde{Z}} \tilde{b}_{\tilde{Z}{}_{\gamma^i}\tilde{X}} 
			& = & - g_{MX} + \sum_{b_{ZX} \geq 0} \sum_{\tilde{Z} \in \ind(Z)} g_{{M}{Z}} \tilde{b}_{\tilde{Z}{}_{\gamma^i}\tilde{X}} \\
			& = - g_{MX} + \sum_{b_{ZX} \geq 0} b_{ZX}g_{MZ}.
\end{align*}
A symmetric calculation for $M = Y$ and $N \in \min_\gamma(T^*) \setminus \{Y\}$, keeping in mind that the matrix $\tilde{B}_T$ is skew-symmetric, yields
\begin{align*}
	g^*_{YN} & = \sum_{\tilde{Y} \in \ind(Y)} \tilde{g}_{\tilde{Y}\tilde{N}} = \sum_{\tilde{X} \in \ind(X)} \big( - \tilde{g}_{{}_{\gamma^i}\tilde{X}\tilde{N}} - \sum_{\tilde{b}_{{}_{\gamma^i}\tilde{X}\tilde{Z}} \leq 0} \tilde{b}_{{}_{\gamma^i}\tilde{X}\tilde{Z}}\tilde{g}_{\tilde{Z}\tilde{N}} \big) %\\
	& = - g_{XN} - \sum_{b_{XZ} \leq 0} b_{XZ}g_{ZN}.
\end{align*}
Finally, if $M = N = Y$ we obtain
\begin{align*}
	g_{YY} & = \sum_{\tilde{X}' \in \ind(X)} \Big( \tilde{g}_{\tilde{X}'\tilde{X}} -  
		\sum_{\tilde{b}_{\tilde{Z}\tilde{X}} \geq 0} \tilde{b}_{\tilde{Z}\tilde{X}} \tilde{g}_{\tilde{X}'\tilde{Z}}  - 	\sum_{\tilde{b}_{\tilde{Z}\tilde{X}'} \geq 0}  \tilde{b}_{\tilde{Z}\tilde{X}'}\tilde{g}_{\tilde{Z}\tilde{X}}  -					\sum_{\tilde{b}_{\tilde{X}'\tilde{Z}} \leq 0}  \sum_{\tilde{b}_{\tilde{L}\tilde{X}} \geq 0} \tilde{b}_{\tilde{X}'\tilde{Z}}\tilde{g}_{\tilde{Z}\tilde{L}} \tilde{b}_{\tilde{L}\tilde{X}} \Big) \\
		&= g_{XX} - \sum_{b_{ZX} \geq 0} b_{ZX}g_{XZ} + \sum_{b_{XZ} \leq 0} b_{XZ}g_{ZX} - \sum_{b_{XZ} \leq 0} \sum_{b_{LX} \geq 0}  \sum_{\tilde{Z} \in \ind(Z)}  \sum_{\tilde{X}' \in \ind(X)}   \tilde{b}_{\tilde{X}'\tilde{Z}}\sum_{\tilde{L} \in \ind(L)}\tilde{g}_{\tilde{Z}\tilde{L}} \tilde{b}_{\tilde{L}\tilde{X}} \\
		&= g_{XX} - \sum_{b_{ZX} \geq 0} b_{ZX}g_{XZ} + \sum_{b_{XZ} \leq 0} b_{XZ}g_{ZX} - \sum_{b_{XZ} \leq 0} \sum_{b_{LX} \geq 0}  b_{XZ}  \sum_{\tilde{L} \in \ind(L)}  \sum_{\tilde{Z} \in \ind(Z)} \tilde{g}_{\tilde{Z}\tilde{L}} \tilde{b}_{\tilde{L}\tilde{X}} \\
		&= g_{XX} - \sum_{b_{ZX} \geq 0} b_{ZX}g_{XZ} + \sum_{b_{XZ} \leq 0} b_{XZ}g_{ZX} - \sum_{b_{XZ} \leq 0} \sum_{b_{LX} \geq 0}  b_{XZ}  g_{ZL} b_{LX}.
\end{align*}

To summarise, we have
\[
	g^*_{MN} = 	\begin{cases}
					g_{MN} & \text{if $M \neq Y, N \neq Y$}\\
					-g_{MX} + \sum_{b_{KX} \geq 0} b_{KX}g_{MK} & \text{if $M \neq Y, N = Y$}\\
					-g_{XN} - \sum_{b_{XL} \leq 0} b_{XL}g_{LN} & \text{if $M = Y, N \neq Y$} \\
					g_{XX} - \sum_{b_{KX} \geq 0} b_{KX}g_{XK} + \sum_{b_{XL} \leq 0}b_{XL}g_{LX} - \sum_{b_{XK} \leq 0} \sum_{b_{LX} \geq 0}  b_{XK} g_{KL} b_{LX} & \text{if $M = N = Y$.}
				\end{cases}
\]
Comparing with the entries of $G'$, bearing in mind that in $G^*$ the row and column labelled by $X$ in $G$ and $G'$ got relabelled by $Y$, we see that $G' = G^*$.
\end{proof}

We can now prove Theorem \ref{T:mutation matrix}.

\begin{proof}
	Set $U = U_{T,X} = (u_{MN})_{M,N \in \min_\gamma(T)}$ and 
	$W=W_{T,X}=(w_{MN})_{M,N \in \min_\gamma(T)}$. A straight-forward calculation shows that 
	$U^2 = W^2 = I_n$.
	Set $T^* = \mu_X(T)$. By Lemma \ref{L:Ringel and Cartan} and \ref{P:mutation Cartan} 
	we have
	\[
		R_{T^*} = G_{T^*}^{-1} = (UG_TW)^{-1} = WG_T^{-1}U = WR_TU.
	\]
	Set $U^\circ$ and  
	$W^\circ$ respectively to be the submatrices of $U$ and  
	$W$ consisting of rows and columns labelled by non-projective $\gamma$-equivariant summands of $T$. 	
	By Lemma \ref{L:exchange and Ringel} the matrices describing the mutation of $R^\circ_T$ in the direction of $X$ according to Lemma \ref{L:matrix mutation} are precisely $U^\circ$ and %$V^\circ$ 
	$W^\circ$ and we have
	\[
		\mu_X(R^{\circ}_T) = W^\circ R^\circ_T{U^\circ}.
	\]
Note that we have  
$u_{MN} = w_{NM} = 0$ whenever $M$ is projective and $N$ is non-projective, and let $n$ be the number of projective minimal $\gamma$-equivariant $\La$-modules. Thus, in block-matrix notation, where $*$ is a placeholder for any submatrix with potentially non-zero entries, we have
\[
	WR_TU = \begin{bmatrix} W^\circ & 0 \\ * & I_n \end{bmatrix}\begin{bmatrix} R_T^\circ & * \\ * & * \end{bmatrix}\begin{bmatrix} U^\circ & * \\ 0 & I_n \end{bmatrix} = \begin{bmatrix} W^\circ R_T^\circ U^\circ & * \\ * & * \end{bmatrix}.
\]
Therefore, denoting by $(WR_TU)^\circ$ the submatrix of $WR_TU$ with rows and columns labelled by non-projective minimal $\gamma$-equivariant summands of $T$, we have that
\[
	\mu_X(R^{\circ}_T) =W^\circ R^\circ_T{U^\circ}= (WR_TU)^\circ.
\]
	Therefore we conclude that
	\[
		B^\circ_{\mu_{X}(T)} = B^\circ_{T^*} = R^\circ_{T^*} = (WR_TU)^\circ = \mu_X(R^{\circ}_T) = \mu_X(B^{\circ}_T).
	\]
\end{proof}

\subsection{Admissibility of cluster tilting modules}\label{S:admissible}

Note that it is straightforward to calculate that the start module of $\modu \La$, as constructed in the proof of Theorem \ref{thm:La_2_ct}, is admissible for every mesh algebra $\La$. However, for $\La = P(\bF_4)$ and $\La = P(\bB_k)$ for $k \geq 2$ we do not know whether all cluster tilting modules (or even all cluster tilting modules that are related to the start module via mutation) are admissible. In particular, admissibility is not in general conserved under mutation, cf.\ \cite[Remark~2.18]{Dupont-ArXiv}.

However, in the remaining mesh algebras, all cluster tilting modules are admissible. This is clear for the simply laced types. Moreover, in this section, we observe that all $\gamma$-equivariant cluster tilting modules in $P(\bG_2)$ and $P(\bC_n)$ for $n \geq 3$ are admissible.

\begin{lemma} Let $T$ be a $\gamma$-equivariant cluster tilting module of $\La$, and set $E= \End_{\La}(T)$. Then the quiver of $E$ does not have
2-cycles in the part corresponding to non-projective summands of $T$.
\end{lemma}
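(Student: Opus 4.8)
The plan is to show that for non-projective indecomposable summands $\tilde{M}, \tilde{N} \in \ind(T)$, we cannot simultaneously have arrows $\tilde{M} \to \tilde{N}$ and $\tilde{N} \to \tilde{M}$ in the quiver of $E = \End_\La(T)$. Equivalently, using the projective resolution of $S_{\tilde{N}}$ from Sequence (\ref{S:resolution of SX}), one sees that $\dim \Ext^1_E(S_{\tilde{N}}, S_{\tilde{M}})$ is governed by the multiplicity of the projective $P_{\tilde{M}}$ (and of $P_{{}_{\gamma^{-1}}\tilde{N}}$) in that resolution, i.e.\ by the approximation terms $\tilde{T}'$ and $\tilde{T}''$ sitting in the relevant exchange sequences. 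So the natural route is: pick $\tilde{M}$, let $X = \bigoplus_{i=0}^n {}_{\gamma^i}\tilde{M}$ be the minimal $\gamma$-equivariant summand of $T$ containing $\tilde{M}$, and consider the pointed exchange pair $(X, Y)$ with base $(\tilde{M}, \tilde{Y})$ associated to $T/X$, together with the exchange sequences (\ref{S:forwards}) and (\ref{S:backwards}) and the resulting resolution (\ref{S:resolution of SX}) of $S_{\tilde{M}}$.

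The first step is to read off from (\ref{S:resolution of SX}) — and from the analogous resolution of $S_{\tilde{N}}$ — that the number of arrows $\tilde{N} \to \tilde{M}$ equals $(\tilde{T}' : \tilde{N}) - (\text{contribution of }P_0)$, where $\tilde{T}'$ is the $\add(T/X)$-approximation term in $0 \to \tilde{M} \to \tilde{T}' \to \tilde{Y} \to 0$; since $\tilde{f}$ is a minimal left approximation, $(\tilde{T}' : \tilde{N}) = \dim\Ext^1_E(S_{\tilde{M}}, S_{\tilde{N}})$ by the argument already used in the proof of Proposition \ref{P:mutation Cartan}. The key point will be a symmetry: apply $\Hom_\La(\tilde{M}, -)$ to the exchange sequence $0 \to \tilde{N} \to \tilde{N}' \to {}_{\gamma^{-1}}\tilde{M} \to 0$ (the backwards sequence, after a $\gamma$-twist) to express the number of arrows $\tilde{M} \to \tilde{N}$ in $E$ in terms of the same approximation data. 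Then the two conditions "arrow $\tilde{M} \to \tilde{N}$ exists" and "arrow $\tilde{N} \to \tilde{M}$ exists" translate, via Proposition \ref{P:Ext3-i} and the $\Ext^1$-symmetry of Proposition \ref{P:Ext-symmetry}, into a contradiction: the exchange sequence $0 \to \tilde{M} \to \tilde{T}' \to \tilde{Y} \to 0$ would have $\tilde{N}$ as a summand of $\tilde{T}'$, and the backwards exchange sequence $0 \to \tilde{Y} \to \tilde{T}'' \to {}_{\gamma^{-1}}\tilde{M} \to 0$ would force $\tilde{N}$ (or a $\gamma$-twist of it) to appear as a summand of $\tilde{T}''$ in a way incompatible with the minimality of the approximations, exactly as the non-split character of these sequences and Riedtmann's degeneration argument (as in the proof of Proposition \ref{P:exchange backwards}) preclude.

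The main obstacle I anticipate is bookkeeping the $\gamma$-twists correctly: because $\Ext^1$-symmetry for $\gamma$-equivariant modules carries a twist by $\gamma$ (Lemma \ref{L:Ext twist} and Remark \ref{R:twist}), the "dual" arrow to $\tilde{M} \to \tilde{N}$ is really an arrow ${}_{\gamma^j}\tilde{N} \to {}_{\gamma^k}\tilde{M}$ for appropriate $j, k$, and one must check that when $\tilde{M}$ and $\tilde{N}$ lie in minimal $\gamma$-equivariant summands $X$ and $Z$ of $T$, a 2-cycle between the vertices $\tilde{M}$ and $\tilde{N}$ really does force an arrow from $\tilde{M}$ to some $\gamma$-translate of $\tilde{M}$ inside the quiver — which is impossible by Item (1) of Theorem \ref{T:maximal rigid = 2-ct}. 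Indeed, the cleanest way to finish is: a 2-cycle $\tilde{M} \rightleftarrows \tilde{N}$ gives $\Ext^1_E(S_{\tilde{M}}, S_{\tilde{N}}) \neq 0 \neq \Ext^1_E(S_{\tilde{N}}, S_{\tilde{M}})$; combining the resolution (\ref{S:resolution of SX}) of $S_{\tilde{M}}$ with Proposition \ref{P:Ext3-i} yields $\Ext^2_E(S_{\tilde{N}}, S_{{}_{\gamma^{-1}}\tilde{M}}) \neq 0$, while the arrow $\tilde{N} \to \tilde{M}$ together with the resolution of $S_{\tilde{N}}$ forces $\tilde{M}$ to occur in the second approximation term of the exchange sequence for $\tilde{N}$; tracing this back through the minimality of the left/right approximations and the fact that $T \oplus \tilde{Y}$ is basic contradicts $\dim \Ext^1_\La(\tilde{Y}, {}_{\gamma^i}\tilde{M}) \leq 1$ from Lemma \ref{L:no loops}. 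Once the twist indices are pinned down, each step is a short diagram chase, so the content is really the twist bookkeeping rather than any deep new input.
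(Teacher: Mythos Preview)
Your proposal circles around the right ingredients (Proposition~\ref{P:Ext3-i}, the resolution (\ref{S:resolution of SX}), Theorem~\ref{T:maximal rigid = 2-ct}(1)), but it never closes. The step you label as ``tracing this back \ldots\ contradicts $\dim \Ext^1_\La(\tilde{Y}, {}_{\gamma^i}\tilde{M}) \leq 1$ from Lemma~\ref{L:no loops}'' is not justified and does not follow from what you set up. What you actually extract from the two arrows $\tilde{M}\to\tilde{N}$ and $\tilde{N}\to\tilde{M}$, via Proposition~\ref{P:Ext3-i}, is that $\tilde{N}$ occurs in $\tilde{T}'$ and ${}_{\gamma^{-1}}\tilde{N}$ occurs in $\tilde{T}''$ in the resolution (\ref{S:resolution of SX}) of $S_{\tilde{M}}$. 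That places two \emph{different} indecomposables in two \emph{different} approximation terms; nothing here forces any $\Ext^1_\La(\tilde{Y},{}_{\gamma^i}\tilde{M})$ to exceed $1$, so Lemma~\ref{L:no loops} is not violated. Your plan is missing the bridge that turns the 2-cycle into a \emph{self}-extension of a single simple.

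The paper supplies exactly that bridge in one line: it quotes \cite[Proposition~3.11]{GLS-3}, which says that a 2-cycle $\tilde{V}\rightleftarrows\tilde{Z}$ in the quiver of $E$ forces $\Ext^2_E(S,S)\neq 0$ for $S=S_{\tilde{V}}$ or $S=S_{\tilde{Z}}$. Once you have $\Ext^2_E(S_{\tilde{V}},S_{\tilde{V}})\neq 0$, Proposition~\ref{P:Ext3-i} with $i=1$, $\tilde{X}=\tilde{Z}=\tilde{V}$ gives $\Ext^1_E(S_{\tilde{V}},S_{{}_{\gamma^{-1}}\tilde{V}})\neq 0$, i.e.\ an arrow $\tilde{V}\to{}_{\gamma^{-1}}\tilde{V}$, contradicting Theorem~\ref{T:maximal rigid = 2-ct}(1). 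That is the whole proof. If you want to avoid citing \cite{GLS-3} you would need to reprove that 2-cycle $\Rightarrow \Ext^2_E(S,S)\neq 0$ step yourself; your current outline does not do this, and the Riedtmann-degeneration or minimality arguments you allude to are not enough on their own.
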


\begin{proof}
Suppose we have arrows $\tilde{V}\to \tilde{Z}$ and $\tilde{Z}\to \tilde{V}$ in the quiver of $E$. Then by \cite[Proposition~3.11]{GLS-3}, for a simple module $S$ of $E$ we have
$\Ext^2(S, S) \neq 0$. In fact from the proof it follows that this is the case for $S=S_{\tilde{V}}$ or $S_{\tilde{Z}}$, without loss of generality assume this holds for $S=S_{\tilde{V}}$.
By Proposition \ref{P:Ext3-i} we obtain 
$0\neq \Ext^1(S_{\tilde{V}}, S_{_{\gamma^{-1}}\tilde{V}})$; a contradiction to Theorem \ref{T:maximal rigid = 2-ct}.
\end{proof}

\begin{corollary}
There is no path of the form $\tilde{V} \to \tilde{W} \to {}_\gamma \tilde{V}$ in the quiver of $E$ where at least one of $\tilde{V}, \tilde{W}$ is $\gamma$-equivariant. 
\end{corollary}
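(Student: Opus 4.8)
The statement to prove is the final Corollary: there is no path $\tilde{V}\to\tilde{W}\to {}_\gamma\tilde{V}$ in the quiver of $E=\End_\La(T)$ when at least one of $\tilde{V},\tilde{W}$ is $\gamma$-equivariant. The idea is that such a path would produce (via the previous Lemma and Proposition \ref{P:Ext3-i}) a loop in the quiver of $E$, contradicting Item (1) of Theorem \ref{T:maximal rigid = 2-ct}, which forbids arrows from any indecomposable $\tilde{X}\in\add(T)$ to ${}_{\gamma^i}\tilde{X}$.

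First I would dispose of the case where one of the two middle/endpoint vertices is $\gamma$-invariant. Suppose $\tilde{W}$ is $\gamma$-equivariant, so $\tilde{W}\cong {}_\gamma\tilde{W}$. Then applying $\gamma$ to the arrow $\tilde{V}\to\tilde{W}$ gives an arrow ${}_\gamma\tilde{V}\to {}_\gamma\tilde{W}\cong\tilde{W}$, so together with the arrow $\tilde{W}\to{}_\gamma\tilde{V}$ we obtain a $2$-cycle $\tilde{W}\to{}_\gamma\tilde{V}\to\tilde{W}$ in the quiver of $E$. Note that since a $2$-cycle at a projective vertex would itself give (by \cite[Proposition~3.11]{GLS-3} applied as in the previous Lemma) an $\Ext^2$-self-extension, hence an $\Ext^1$ violation of Theorem \ref{T:maximal rigid = 2-ct} via Proposition \ref{P:Ext3-i} on whichever of the two vertices is non-projective — but in fact one checks that for a cluster tilting module the relevant vertices here cannot both be projective — we may invoke the previous Lemma directly: the quiver of $E$ has no $2$-cycles among non-projective summands, a contradiction (and if ${}_\gamma\tilde{V}$ is projective one uses that $\tilde{W}$ is then forced non-projective and argues with the self-extension at $S_{\tilde{W}}$). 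Symmetrically, if instead $\tilde{V}$ is $\gamma$-equivariant, apply $\gamma^{-1}$ to the arrow $\tilde{W}\to{}_\gamma\tilde{V}$ to get ${}_{\gamma^{-1}}\tilde{W}\to\tilde{V}\cong {}_{\gamma^{-1}}\tilde{V}$, and combine with $\tilde{V}\to\tilde{W}$, i.e. ${}_{\gamma^{-1}}\tilde{V}\to{}_{\gamma^{-1}}\tilde{W}$ after applying $\gamma^{-1}$, again producing a $2$-cycle, hence the same contradiction.

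The cleaner uniform route, which I would actually write, is: a path $\tilde{V}\to\tilde{W}\to{}_\gamma\tilde{V}$ gives $\dim\Ext^1_E(S_{\tilde{W}},S_{\tilde{V}})\geq 1$ and $\dim\Ext^1_E(S_{{}_\gamma\tilde{V}},S_{\tilde{W}})\geq 1$. Composing, and using that the quiver of $E$ (whose global dimension is $3$ by Theorem \ref{T:maximal rigid = 2-ct}) cannot have all middle Yoneda products vanish in the relevant degree, one gets $\Ext^2_E(S_{{}_\gamma\tilde{V}},S_{\tilde{V}})\neq 0$ — here the $\gamma$-equivariance hypothesis on $\tilde{V}$ or $\tilde{W}$ is exactly what is needed to rule out the product being killed (this is where one mimics the argument of the previous Lemma, which treats the $2$-cycle case $\tilde{V}=\tilde{W}$ up to twist). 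Then Proposition \ref{P:Ext3-i}, with $3-i=2$, i.e. $i=1$, gives $\Ext^1_E(S_{{}_\gamma\tilde{V}}, S_{{}_{\gamma^{-1}}({}_\gamma\tilde{V})}) = \Ext^1_E(S_{{}_\gamma\tilde{V}},S_{\tilde{V}})\neq 0$, which contradicts Item (1) of Theorem \ref{T:maximal rigid = 2-ct}, since this is an arrow from the indecomposable summand $\tilde{V}$ to ${}_\gamma\tilde{V}$.

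The main obstacle I anticipate is making precise the claim that the Yoneda composition $\Ext^1_E(S_{\tilde{W}},S_{\tilde{V}})\otimes\Ext^1_E(S_{{}_\gamma\tilde{V}},S_{\tilde{W}})\to\Ext^2_E(S_{{}_\gamma\tilde{V}},S_{\tilde{V}})$ is nonzero. For a general algebra a length-two path need not give a nonzero $\Ext^2$ between the endpoint simples (relations can intervene). This is precisely why the previous Lemma cites \cite[Proposition~3.11]{GLS-3}, which is tailored to endomorphism algebras of rigid modules over (twisted-) $3$-Calabi–Yau-type algebras, and why the $\gamma$-equivariance of one of the vertices is invoked: it lets one transport the configuration so that the previous Lemma's $2$-cycle argument applies verbatim. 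So in the write-up I would not re-derive the $\Ext^2\neq 0$ statement from scratch but reduce to the $2$-cycle situation of the previous Lemma by the twisting argument of the second paragraph, and then quote it. The only care needed is bookkeeping of which simple the twist lands on, and the (easy) observation that in our cluster tilting module not both $\tilde V$ and $\tilde W$ can be projective, so the non-projective hypothesis of the previous Lemma is met.
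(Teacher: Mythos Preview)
Your core approach --- twist by $\gamma$ to convert the path into an honest $2$-cycle, then invoke the previous Lemma --- is exactly what the paper intends; the Corollary is stated there without proof, as an immediate consequence of that Lemma.

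Two comments. First, the case where $\tilde{V}$ is $\gamma$-equivariant is simpler than your treatment: then ${}_\gamma\tilde{V}\cong\tilde{V}$, so the given path $\tilde{V}\to\tilde{W}\to{}_\gamma\tilde{V}$ \emph{is} already the $2$-cycle $\tilde{V}\to\tilde{W}\to\tilde{V}$, with no $\gamma^{-1}$-twisting required. Your ``cleaner uniform route'' via Yoneda products should be dropped entirely, for the reason you yourself identify (a length-two path need not produce a nonzero $\Ext^2$).

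Second, the projectivity concern you raise is genuine: the previous Lemma only rules out $2$-cycles among \emph{non-projective} summands (its proof applies Proposition~\ref{P:Ext3-i}, which needs non-projectivity), and your proposed workarounds are not quite right. In particular, \cite[Proposition~3.11]{GLS-3} tells you $\Ext^2_E(S,S)\neq 0$ for \emph{one} of the two simples involved, but does not let you choose which, so you cannot simply ``argue with the self-extension at $S_{\tilde{W}}$'' when $\tilde{V}$ happens to be projective. The paper does not address this point either (it gives no proof), and for the intended application --- the subsequent Corollary on admissibility for $P(\bC_n)$ and $P(\bG_2)$ --- the full generality is not needed. But if you want the Corollary exactly as stated, a separate argument covering $2$-cycles that touch a projective vertex is required.
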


\begin{corollary}
	Any $\gamma$-equivariant cluster tilting module in $P(\bG_2)$ or $P(\bC_n)$ for $n \geq 3$ is admissible.
\end{corollary}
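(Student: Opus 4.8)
The plan is to reduce the admissibility of $T$ — a sign condition on the signed adjacency matrix $\tilde B_T=(\tilde b_{\tilde M\tilde N})_{\tilde M,\tilde N\in\ind(T)}$ of the quiver of $E:=\End_\La(T)$ — to a statement about arrows between non-$\gamma$-equivariant summands, and then to settle that statement using the two results just established (the Lemma that the non-projective part of the quiver of $E$ has no $2$-cycles, and its Corollary ruling out a path $\tilde V\to\tilde W\to{}_\gamma\tilde V$ with $\tilde V$ or $\tilde W$ being $\gamma$-equivariant).

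First I would carry out the easy reductions. Fix $\tilde M,\tilde N\in\ind(T)$ and $i\in\bZ$; we must show $\tilde b_{\tilde M\tilde N}$ and $\tilde b_{\tilde M\,{}_{\gamma^i}\tilde N}$ have the same weak sign. Since $\gamma$ acts on $\tilde B_T$ (Definition \ref{D:matrix-gamma-action}), $\tilde b_{\tilde M\,{}_{\gamma^i}\tilde N}=\tilde b_{{}_{\gamma^{-i}}\tilde M\,\tilde N}$; thus if $\tilde M$ is $\gamma$-equivariant the two entries are equal, while if $\tilde N$ is $\gamma$-equivariant then ${}_{\gamma^i}\tilde N\cong\tilde N$, and in either case there is nothing to prove. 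If $\tilde M$ and $\tilde N$ lie in a common $\gamma$-orbit, then so do $\tilde M$ and ${}_{\gamma^i}\tilde N$, and Theorem \ref{T:maximal rigid = 2-ct}(1) together with the $\gamma$-action forces $\tilde b_{\tilde M\,{}_{\gamma^i}\tilde N}=0=\tilde b_{\tilde M\tilde N}$. So the only case left is that $\tilde M$ and $\tilde N$ are both non-$\gamma$-equivariant and lie in distinct $\gamma$-orbits; recall that $\gamma$ has order $2$ on $P(\bC_n)$ and order $3$ on $P(\bG_2)$ by Lemma \ref{L:order 2}, so all orbits have size at most $3$. For this remaining case it is enough to prove the following key claim: for $\La=P(\bC_n)$ with $n\ge3$ or $\La=P(\bG_2)$, the quiver of $E$ has no arrow between two non-$\gamma$-equivariant indecomposable summands of $T$ that lie in distinct $\gamma$-orbits (whence every $\tilde b_{\tilde M\,{}_{\gamma^i}\tilde N}$ in the remaining case is zero, and admissibility follows); arrows incident to a projective summand are handled by the same mechanism, using that $\gamma$ permutes the projective-injectives and Theorem \ref{T:maximal rigid = 2-ct}(1).

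The hard part is the key claim. I would argue by contradiction: given an arrow $\tilde N\to\tilde M$ between non-$\gamma$-equivariant summands in distinct orbits, Proposition \ref{P:Ext3-i} reflects it as a non-vanishing $\Ext^2_E(S_{\tilde M},S_{{}_{\gamma^{-1}}\tilde N})$, i.e.\ a minimal relation of $E$ from ${}_{\gamma^{-1}}\tilde N$ to $\tilde M$; together with the arrow itself and all of their $\gamma$-iterates around the (size $\le 3$) orbits, this yields a short oriented configuration in the quiver of $E$. The crux is then to show such a configuration is impossible — by combining the detection of $2$-cycles through $\Ext^2_E(S,S)$ used in the proof of the Lemma (\cite[Proposition~3.11]{GLS-3}) with the explicit presentation of the mesh algebras $P(\bC_n)$ and $P(\bG_2)$ from \cite{ES}, in the same spirit as the list of arrows of $\La$ was used in the proof of Proposition \ref{P:gldim} — forcing either a $2$-cycle among non-projective summands of $T$ (contradicting the Lemma) or a path $\tilde V\to\tilde W\to{}_\gamma\tilde V$ with $\tilde V$ or $\tilde W$ $\gamma$-equivariant (contradicting the Corollary). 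I expect this closing step to be the main obstacle, and it is exactly where the restriction to $P(\bC_n)$ and $P(\bG_2)$ is essential: for $P(\bB_k)$ and $P(\bF_4)$ the analogous rotation does not close up into a forbidden configuration, consistent with the paper's remark that admissibility of arbitrary cluster tilting modules remains open there, whereas the rigid combinatorics of the order-$2$ fork automorphism of $\bD_{n+1}$ and of the order-$3$ automorphism of $\bD_4$ make the argument go through.
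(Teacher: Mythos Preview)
Your reduction in the first paragraph is correct and matches what the paper would need: the only obstruction to admissibility is a pair $\tilde M,\tilde N\in\ind(T)$, both non-$\gamma$-equivariant and in distinct $\gamma$-orbits, together with arrows $\tilde N\to\tilde M$ and $\tilde M\to{}_{\gamma^i}\tilde N$ in the quiver of $E$ (using the Lemma on $2$-cycles to pass from sign conditions to actual arrows). The paper itself gives no proof of this Corollary; it is stated as an immediate consequence of the preceding Lemma and Corollary, so there is little to compare against directly.

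That said, your proposal has a genuine gap. You formulate a ``key claim'' (no arrow at all between non-$\gamma$-equivariant summands in distinct $\gamma$-orbits) and then explicitly do not prove it, writing that you ``expect this closing step to be the main obstacle''. The strategy you sketch---reflecting an arrow via Proposition~\ref{P:Ext3-i} to a nonvanishing $\Ext^2_E$, then ``rotating'' around the $\gamma$-orbits and invoking \cite[Proposition~3.11]{GLS-3} and the presentation of $\La$ from \cite{ES}---is not carried out, and it is not clear how it would close: the configuration you produce is a $4$-cycle (for $P(\bC_n)$) or a $6$-cycle (for $P(\bG_2)$) among non-$\gamma$-equivariant vertices, and neither the $2$-cycle Lemma nor its Corollary forbids such cycles. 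Your key claim is also strictly stronger than admissibility, which only asks for sign-consistency, not for the absence of arrows.

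What singles out $P(\bC_n)$ and $P(\bG_2)$---and what your final sentence gestures at without exploiting---is that in the quiver of $\La$ the vertices moved by $\gamma$ are adjacent only to a single common $\gamma$-fixed vertex and not to one another (the fork of $\bD_{n+1}$, respectively the triple fork of $\bD_4$); by contrast, for $P(\bB_k)$ and $P(\bF_4)$ non-fixed vertices are adjacent to non-fixed vertices. Turning this structural fact about $\La$ into a statement about arrows in the quiver of $E=\End_\La(T)$ is precisely the missing step, and your proposal does not supply it.
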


\section{An Example}
We describe in detail a mutation of the start module, when $\La = P(\bB_3)$. As the input, we take the category $\cC/\langle \tau\sigma\rangle$ as described in 
Section \ref{S:list} Item (2). Here $\cC = \mathbb{K}(\bZ\Delta)$ for $\Delta$ of type $A_5$ with the labelling as in 
Figure \ref{fig:standard labelling}. 
We take the Auslander category  to be the subcategory whose quiver is the connected subquiver with 15 vertices, with (unique) sink labelled by $0_0$.  

\subsection{The start module} 
The indecomposable summands of the start module $T$ of $P(\bB_3)$ are as follows. 

\begin{align*}
T(0_0) &= \tiny{ \begin{matrix} & & 0& & \cr
                                          &1&&2&\cr
                                          4&& 0&&3\cr
                                          &2&&1&\cr
                                          &&0&&\end{matrix}}, & T(0_1) &=\tiny{\begin{matrix} & & & & \cr
                                          &1&&2&\cr
                                          4&& 0&&3\cr
                                          &2&&1&\cr
                                          &&0&&\end{matrix}}, & T(0_2) &= \tiny{\begin{matrix}&&&&\cr&&&&\cr
                                          4&& &&3\cr
                                          &2&&1&\cr
                                          &&0&&\end{matrix}.}
\\
T(1_0) &= \tiny{\begin{matrix}&2&&&\cr
                                       3&&0&&\cr
                                       &1&&2\cr &&0&&3\cr&&&1&\end{matrix}}
                                       , &  T(1_1) &= \tiny{\begin{matrix}&&&&\cr
                                                                          3&&&&\cr
                                       &1&&2&\cr &&0&&3\cr&&&1&\end{matrix}}, &    T(1_2)& = \tiny{\begin{matrix}&&&&\cr
                                       &&&&\cr
                                       &&&&\cr &&&&3\cr&&&1&\end{matrix}} \\ 
T(3_0)& =\tiny{ \begin{matrix}4&&&&\cr &2&&&\cr&&0&&\cr&&&2&\cr&&&&3\end{matrix}}, & T(3_1) &= \tiny{\begin{matrix}2&\cr &3, & \end{matrix}} & T(3_2) & =  \tiny{3}. \\
T(2_i) & = {}_\gamma T(1_i) & T(4_i) &= {}_\gamma T(2_i) &  \text{for } i &= 1,2.
\end{align*}

Here the rows of a diagram describe the socle series. The module has a subquotient
with top $i$ and socle $j$ if and only if there is a subdiagram $\begin{matrix} i&\cr &j\end{matrix}$ or $\begin{matrix}&i\cr j&\end{matrix}$ in two neighbouring rows.

The module $T(x_i)$ is the push down of the injective module $I(x_i)$ of the Auslander category, viewed as a module over $\cC$.
Note that  each $T(x_i)$ is a submodule of $T(x_0)$ which is injective and projective as a  $\La$-module.

It is straightforward to compute the exchange sequences for the indecomposable summands of $T$, and hence the quiver of the endomorphism algebra $E= {\rm End}_{\La}(T)$, which is
as follows.

 \begin{center}
\begin{tikzpicture}[scale = 0.7, font=\tiny]

%P(\bA_3)

%draw nodes
\node (32) at (-2,6) {$4_2$};
\node (42) at (2,6) {$3_2$};

\node (12) at (-1,5) {$2_2$};
\node (22) at (1,5) {$1_2$};

\node (31) at (-2,4) {$3_1$};
\node (02) at (0,4) {$0_2$};
\node (41) at (2,4) {$4_1$};

\node (11) at (-1,3) {$1_1$};
\node (21) at (1,3) {$2_1$};

\node (30) at (-2,2) {$4_0$};
\node (01) at (0,2) {$0_1$};
\node (40) at (2,2) {$3_0$};

\node (10) at (-1,1) {$1_0$};
\node (20) at (1,1) {$2_0$};

\node (00) at (0,0) {$0_0$};

%draw arrows AR quiver
\draw[<-] (32) -- (12);
\draw[<-] (42) -- (22);

\draw[<-] (12) -- (31);
\draw[<-] (12) -- (02);
\draw[<-] (22) -- (02);
\draw[<-] (22) -- (41);

\draw[<-] (31) -- (11);
\draw[<-] (02) -- (11);
\draw[<-] (02) -- (21);
\draw[<-] (41) -- (21);

\draw[<-] (11) -- (30);
\draw[<-] (11) -- (01);
\draw[<-] (21) -- (01);
\draw[<-] (21) -- (40);

\draw[<-] (30) -- (10);
\draw[<-] (01) -- (10);
\draw[<-] (01) -- (20);
\draw[<-] (40) -- (20);

\draw[<-] (10) -- (00);
\draw[<-] (20) -- (00);

%draw extra
\draw[<-] (00) -- (01);
\draw[<-] (01) -- (02);

\draw[<-] (10) to [out=90, in = 180] (21);
\draw[<-] (20) to [out=90, in = 0] (11);

\draw[<-] (11) to [out=90, in = 180] (22);
\draw[<-] (21) to [out=90, in = 0] (12);

\draw[<-] (40) to [out=260, in = 225, looseness=2.5] (31);
\draw[<-] (30) to [out=280, in = 315, looseness=2.5] (41);

\draw[<-] (41) to [out=45, in = 45, looseness=2.5] (32);
\draw[<-] (31) to [out=135, in = 135, looseness=2.5] (42);

\end{tikzpicture}

\end{center}

\subsection{The matrices}

The principal part $\tilde{B}_T^\circ$ of the adjacency matrix $\tilde{B}_T$ of the quiver of $E$, i.e.\ the submatrix of rows and columns labelled by non-projective indecomposable summands of $T$, is the matrix
\[ \tilde{B}_T^\circ =
	\tiny{\left(
	\begin{array}{c|cc|cc|c|cc|cc}
	0 & -1 & -1 & 0 & 0 & 1 & 0 & 0 & 0 & 0 \\
	\hline
	1 & 0 & 0 & -1& 0 & -1 & 1 & 0 & 0 & 0 \\
	1 & 0 & 0 & 0 & -1 & -1 & 0 & 1 & 0 & 0 \\
	\hline
	 0 & 1 & 0 & 0 & 0 & 0 & 0 & -1 & 1 & 0 \\
	 0 & 0 & 1 & 0 & 0 & 0 & -1 & 0 & 0 & 1 \\
	 \hline
	 -1 & 1 & 1 & 0 & 0 & 0 & -1 & -1 & 0 & 0 \\
	 \hline
	 0 & -1 & 0 & 0 & 1 & 1 & 0 & 0 & -1 & 0 \\
	 0 & 0 & -1 & 1 & 0 & 1 & 0 & 0 & 0 & -1 \\
	 \hline
	 0 & 0 & 0 & -1 & 0 & 0 & 1 & 0 & 0 & 0 \\
	 0 & 0 & 0 & 0 & -1 & 0 & 0 & 1 & 0 & 0 \\
	\end{array}
	\right)},
	%\end{array}
	%\end{pmatrix}.
\]
where the rows and columns are consecutively labelled by $0_1, 1_1, 2_1, 3_1, 4_1, 0_2, 1_2, 2_2, 3_2, 4_2$. Above, we have divided the matrix into blocks corresponding to the $\gamma$-orbits of the summands of $T$.

The principal part  $\tilde{R}_T^\circ$ is obtained from $\tilde{B}_T^\circ$ by replacing each diagonal $2\times 2$ block by $\left(\begin{matrix} 1 & -1\cr -1 & 1\end{matrix}\right)$, and by replacing each 
off-diagonal $2\times 2$ block with entries $\geq 0$ by its ``twist'', that is by interchanging
\[
	\left(\begin{matrix} 1 & 0\cr 0 & 1\end{matrix}\right) \ \ \mbox{ and } \ \ \left(\begin{matrix} 0&1\cr 1&0\end{matrix}\right).
\]
In particular one sees that the folded matrices $B_T^\circ = \tilde{B}_T^\circ/\langle \gamma\rangle$ and $R_T^\circ = \tilde{R}_T^\circ/\langle \gamma\rangle$ are equal. 
We have
\begin{align}\label{eqn:matrix}
	B^\circ_T = \left(\begin{matrix} 0 & -1 & 0 & 1 & 0 & 0\cr
2 & 0 & -1 & -2 & 1 & 0\cr
0 & 1 & 0 & 0 & -1 & 1\cr
-1 & 1 & 0 & 0 & -1 & 0\cr
0 & -1 & 1 & 2 & 0 & -1\cr
0 & 0 & -1 & 0 & 1 & 0\end{matrix}\right).
%\leqno{(1)}
\end{align}

\subsection{Mutation of the start module} We mutate in the direction of the minimal $\gamma$-equivariant summand $X = T(1_1) \oplus T(2_1)$, and set
\[
	\mu_X(T) = T^* = T/X \oplus Y.
\] 
%$[12]_1$. 
One computes the relevant exchange sequences:
\begin{align*}
T(1_1) \hookrightarrow T(1_0)\oplus T(3_1)\oplus T(0_2) \twoheadrightarrow Y(1_1) &\bigoplus   T(2_1) \hookrightarrow  T(2_0)\oplus T(4_1)\oplus T(0_2) \twoheadrightarrow Y(2_1)\\
Y(1_1) \hookrightarrow T(3_0)\oplus T(0_1)\oplus T(2_2) \twoheadrightarrow T(2_1) &\bigoplus Y(2_1) \hookrightarrow T(4_0)\oplus T(0_1)\oplus T(1_2) \twoheadrightarrow T(1_1)
\end{align*}
%\begin{align*}
%T(1_1) \hookrightarrow & T(1_0)\oplus T(3_1)\oplus T(0_2) \twoheadrightarrow Y(1_1),  &  Y(1_1) \hookrightarrow T(3_0)\oplus T(0_1)\oplus T(2_2) \twoheadrightarrow T(2_1)
%\end{align*}
%and their twists. 
We set $\tilde{X}:= T(1_1)$ and $\tilde{Y} = Y(1_1)$. 
In the proof of Proposition \ref{P:mutation Cartan}, we only need the first exchange sequence. With the notation from the proof of Proposition \ref{P:mutation Cartan} we have
\[
	\tilde{T}' = T(1_0)\oplus T(3_1)\oplus T(0_2) = \bigoplus_{\tilde{b}_{\tilde{Z} \tilde{X}}\geq 0}  \tilde{Z}^{\tilde{b}_{\tilde{Z}\tilde{X}}}.
\]
In Proposition \ref{P:mutation Cartan} and the proof of Theorem \ref{T:mutation matrix}, we have worked with matrices $U$ and $W$ such that $W^\circ B_T^\circ U^\circ = B^\circ_{T^*}$. 
Here, we have
\[
	U^\circ = \left(\begin{matrix} 1 & 0 & 0 & 0 & 0 & 0 \cr
						0 & -1 & 1 & 2 & 0 & 0 \cr
						0 & 0 & 1 & 0 & 0 & 0 \cr
						0 & 0 & 0 & 1 & 0 & 0 \cr
						0 & 0 & 0 & 0 & 1 & 0 \cr
						0 & 0 & 0 & 0 & 0 & 1
\end{matrix}\right)
 \ \ \ \ \text{and} \ \ \ \ 
 W^\circ = \left(\begin{matrix} 1 & 0 & 0 & 0 & 0 & 0 \cr
						0 & -1 & 0 & 0 & 0 & 0 \cr
						0 & 1 & 1 & 0 & 0 & 0 \cr
						0 & 1 & 0 & 1 & 0 & 0 \cr
						0 & 0 & 0 & 0 & 1 & 0 \cr
						0 & 0 & 0 & 0 & 0 & 1
\end{matrix}\right).
\]

\subsection{Matrix mutation} We compare the above with the matrix mutation. We apply Lemma \ref{L:matrix mutation} with $A= B^\circ$,the matrix from Equation (\ref{eqn:matrix}) and mutate at $k=2$, i.e.\ the row labelled by the minimal $\gamma$-equivariant summand  $X = T(1_1) \oplus T(2_1)$ of $T$. 
Then the matrix $U_{A, k}$ is $U^\circ$, and
the matrix $V_{A, k}^t$ is the matrix $W^\circ$. We see that
\[
	\mu_2(B_T^\circ) = W^\circ B^\circ U^\circ = \left(\begin{matrix} 0 & 1 & -1 & -1 & 0 & 0 \cr
						-2 & 0 & 1 & 2 & -1 & 0 \cr
						2 & -1 & 0 & 0 & 0 & 1 \cr
						1 & -1 & 0 & 0 & 0 & 0 \cr
						0 & 1 & 0 & 0 & 0 & -1 \cr
						0 & 0 & -1 & 0 & 1 & 0
\end{matrix}\right) = B_{T^*}^\circ.
\]

\bibliographystyle{abbrv}

\bibliography{sira_bib}

\end{document}